\documentclass[11pt]{article}
\usepackage{amssymb}
\usepackage{amsmath}
\usepackage{amsthm}
\usepackage{latexsym}
\usepackage{hyperref}
\usepackage{mathdots}
\usepackage[all]{xy}
\usepackage{stmaryrd}
\usepackage[normalem]{ulem}
\usepackage[capitalise]{cleveref}
\usepackage{tikz}
\usetikzlibrary{patterns}

\textheight 230mm
\topmargin-1.8cm \textwidth 165mm
\oddsidemargin0cm \evensidemargin0cm
\parskip2ex
\parindent0mm

\theoremstyle{definition}
\newtheorem{theo}{Theorem}[section]
\newtheorem{defi}[theo]{Definition}
\newtheorem{prop}[theo]{Proposition}
\newtheorem{nota}[theo]{Notation}

\newtheorem{cor}[theo]{Corollary}
\newtheorem{lemma}[theo]{Lemma}
\newtheorem{exa}[theo]{Example}
\newtheorem{rem}[theo]{Remark}
\numberwithin{equation}{section}
\newcommand{\DS}{\displaystyle}


\newcommand{\N}{{\mathbb N}}
\newcommand{\F}{{\mathbb F}}

\newcommand{\cA}{{\mathcal A}}
\newcommand{\cC}{{\mathcal C}}
\newcommand{\cD}{{\mathcal D}}

\newcommand{\cF}{{\mathcal F}}

\newcommand{\cH}{{\mathcal H}}
\newcommand{\cI}{{\mathcal I}}

\newcommand{\cM}{{\mathcal M}}

\newcommand{\cO}{{\mathcal O}}

\newcommand{\cB}{{\mathcal B}}
\newcommand{\cT}{{\mathcal T}}
\newcommand{\cL}{{\mathcal L}}

\newcommand{\cN}{{\mathcal N}}
\newcommand{\cU}{{\mathcal U}}
\newcommand{\cV}{{\mathcal V}}
\newcommand{\cW}{{\mathcal W}}
\newcommand{\cX}{{\mathcal X}}
\newcommand{\cY}{{\mathcal Y}}
\newcommand{\cZ}{{\mathcal Z}}

\newcommand{\qM}{$q$-matroid}
\newcommand{\T}{\mbox{$^{\sf T}$}}
\newcommand{\subspace}[1]{\mbox{$\langle{#1}\rangle$}}
\newcommand{\inner}[2]{\mbox{$\langle{#1}\!\mid\!{#2}\rangle$}}

\newcommand{\rk}{{\rm rk}\,}

\newcommand{\cl}{{\rm cl}}
\newcommand{\cyc}{{\rm cyc}}
\newcommand{\GL}{{\rm GL}}
\newcommand{\rowsp}{\mbox{\rm rs}}

\newcommand{\Hyp}{\mbox{$\text{Hyp}$}}

\newcounter{alp}
\newcounter{ara}
\newcounter{rom}
\newenvironment{romanlist}{\begin{list}{(\roman{rom})\hfill}{\usecounter{rom}
     \topsep-1ex \labelwidth.6cm \leftmargin.6cm \labelsep0cm
     \rightmargin0cm \parsep0ex \itemsep.5ex
     \partopsep0ex}}{\end{list}}
\newenvironment{alphalist}{\begin{list}{(\alph{alp})\hfill}{\usecounter{alp}
     \topsep-1.4ex \labelwidth.7cm \leftmargin.7cm \labelsep0cm
     \rightmargin0cm \parsep0ex \itemsep.5ex
     \partopsep-1.4ex}}{\end{list}}

\newenvironment{mylist2}{\begin{list}{(\arabic{ara})\hfill}{\usecounter{ara}
     \topsep-1ex \labelwidth.88cm \leftmargin.88cm \labelsep0cm
     \rightmargin0cm \parsep0ex \itemsep.5ex
     \partopsep-1ex}}{\end{list}}

\makeatletter
\let\@fnsymbol\@arabic
\makeatother
\begin{document}
\title{Decompositions of $q$-Matroids Using Cyclic Flats}
\author{Heide Gluesing-Luerssen\thanks{Department of Mathematics, University of Kentucky, Lexington KY 40506-0027, USA; heide.gl@uky.edu and benjamin.jany@uky.edu. HGL was partially supported by the grant \#422479 from the Simons Foundation.}\quad and Benjamin Jany\footnotemark[1]
}

\date{February 21, 2023}
\maketitle
	
\begin{abstract}
\noindent We study the direct sum of \qM{}s by way of their cyclic flats.
Using that the rank function of a \qM{} is fully determined by the cyclic flats and their ranks, we show
that the cyclic flats of the direct sum of two \qM{}s are exactly all the direct sums of the cyclic flats of the two summands.
This simplifies the rank function of the direct sum significantly.
A \qM{} is called irreducible if it cannot be written as a (non-trivial) direct sum.
We provide a characterization of irreducibility in terms of the cyclic flats and show that every \qM{} can be decomposed into a
direct sum of irreducible \qM{}s, which are unique up to equivalence.
\end{abstract}

\textbf{Keywords:} $q$-matroid, cyclic flats, direct sum, irreducibility, decomposition.

\section{Introduction}\label{S-Intro}

A \qM{} is the $q$-analogue of a matroid. Its ground space is a finite-dimensional vector space over a finite field, and the rank function,
defined on the subspace lattice of that vector space, is the natural generalization of that of a matroid.
$q$-Matroids were introduced formally in 2018 in~\cite{JuPe18}, but actually appeared already in~\cite{Cra64}
and~\cite{BCJ17}.
These combinatorial objects have gained considerable attention in the coding theory community because of their close relation to rank-metric codes.
Indeed, vector rank-metric codes give rise to \qM{}s just like linear block codes induce matroids.
In the language of matroid theory the resulting ($q$-)matroids are simply the representable ones; see also \cref{D-ReprG} below.
Unsurprisingly, the induced ($q$-)matroids provide a lot of combinatorial information about the underlying codes.
For further details concerning rank-metric codes in relation to \qM{}s we refer to \cite{JuPe18,GJLR19,Shi19,GhJo20,GLJ22Gen} and the references therein.

Since their introduction in~\cite{JuPe18}, the theory of \qM{}s has made considerable progress.
Most notably, an extensive catalogue of cryptomorphisms has been derived in \cite{BCJ22}, and the notion of a direct sum of \qM{}s has
been introduced in \cite{CeJu21}.
While the cryptomorphisms form a (non-trivial) generalization of the corresponding ones for matroids, the same cannot be said about the direct sum.
None of the constructions of the direct sum known for matroids has a well-defined $q$-analogue.
The definition introduced in~\cite{CeJu21} is based on the rank function and makes use of the \qM{} union, a construction that is akin to associating a
matroid to a polymatroid (see \cref{T-Union}).
While this produces a \qM{} that has all the properties that one expects from a direct sum, it unfortunately does not appear to behave well
with respect to \qM{} structure: The rank function is somewhat unwieldy and there is no direct way of determining the
independent spaces (resp.\ circuits, bases, and flats) from the independent spaces (resp.\ circuits, bases, and flats) of the two summands.

Fortunately -- as we will show in this paper -- there is a \qM{} structure that does behave very well for the direct sum, namely the cyclic-flat structure.
A theory of cyclic flats has been first proposed in \cite{AlBy22}.
For studying the direct sum of \qM{}s, however, an alternative approach to cyclic flats is more beneficial.
It is presented in Sections~\ref{S-CycCore} and~\ref{S-CycFlats} of this paper.
In those sections and with methods different from those in~\cite{AlBy22}, it is shown that the cyclic flats together with their rank values fully determine the entire \qM{}.
In \cite{AlBy22} this is established by reconstructing the lattice of flats, which in turn leads to the \qM{}, while we determine the rank function of the entire \qM{} directly from the cyclic flats.
We will provide more details on the differences and similarities between~\cite{AlBy22} and this paper in \cref{R-AlBy22}.
In short, thanks to the different approaches, the results appear in different order and with different proofs.
Clearly both approaches have their merits.
For the purpose of this paper however,  our direct approach toward the rank function is better suited than the reconstruction of the lattice of flats -- simply because flats do not behave well when taking direct sums of \qM{}s.
In \cite[Sec.~3]{AlBy22}, the authors use their results to derive a cryptomorphism for \qM{}s based on cyclic flats and their rank values.

Equipped with a sufficient theory on cyclic flats, we turn to the direct sum of \qM{}s.
After recalling and deriving some basic properties in \cref{S-DirSum}, we will
show in \cref{S-DSProperties} that the cyclic flats of a direct sum $\cM_1\oplus\cM_2$ is the collection of the direct sums of all
cyclic flats of~$\cM_1$ and~$\cM_2$.
As a consequence,  the rank function of $\cM_1\oplus\cM_2$ is fully determined by the cyclic flats of~$\cM_1$ and~$\cM_2$ along with their rank values.
This leads to a highly efficient formula of the rank function.
Even more, this result allows us to study decompositions of \qM{}s into irreducible ones.
Naturally, a \qM{} is called irreducible if it cannot be written as a direct sum of \qM{}s with
nonzero ground spaces. This can be nicely characterized in terms of the cyclic flats.
Furthermore, we show that every \qM{} can be written as a direct sum of irreducible ones, whose summands are unique up to equivalence.

We wish to point out that the use of cyclic flats for studying direct sums and decomposing \qM{}s appears to be novel in the
sense that it does not have a blueprint in classical
matroid theory (as far as we know).
It is not surprising that such an approach does not exist for matroids because in that case each of the immediate structures
(rank, independent spaces, flats etc.) can be used to conveniently define the direct sum.
Even more, the notion of connectedness for matroids leads to an equivalence relation, which in turn
produces the decomposition of a matroid into its irreducible (i.e., connected) components.
No $q$-analogue of this equivalence relation that serves the same purpose has been found yet for \qM{}s; see for instance \cite[Sec.~8]{CeJu21}.

\textbf{Notation:}
Throughout, let $\F=\F_q$ and~$E$ be a finite-dimensional $\F$-vector space.
We use $\cL(E)$  to denote the lattice of all subspaces of~$E$.
The subspace generated by~$x_1,\ldots,x_t\in E$ is denoted by $\subspace{x_1,\ldots,x_t}$.
We write $U\leq V$ if $U$ is a subspace of~$V$ and $U\lneq V$ if $U$ is a proper subspace of~$V$.
Furthermore, we denote by $\Hyp(V)$ the set of all hyperplanes of~$V$.
These are the codimension-1 subspaces of~$V$ and must not be confused with hyperplanes in the ($q$-)matroid sense.
For a matrix $M\in\F^{k\times n}$ we denote by $\rowsp(M)\in\cL(\F^n)$ the row space of~$M$.
We let $e_1,\ldots,e_n$ be the standard basis vector in~$\F^n$.

\section{Basic Notions of $q$-Matroids}\label{S-Basics}
In this section we collect some facts about \qM{}s that will be crucial later on.

\begin{defi}\label{D-qMatroid}
A \emph{$q$-matroid with ground space~$E$} is a pair $\cM=(E,\rho)$, where $\rho: \cL(E)\longrightarrow\N_{\geq0}$ is a map satisfying
\begin{mylist2}
\item[(R1)\hfill] Dimension-Boundedness: $0\leq\rho(V)\leq \dim V$  for all $V\in\cL(E)$;
\item[(R2)\hfill] Monotonicity: $V\leq W\Longrightarrow \rho(V)\leq \rho(W)$  for all $V,W\in\cL(E)$;
\item[(R3)\hfill] Submodularity: $\rho(V+W)+\rho(V\cap W)\leq \rho(V)+\rho(W)$ for all $V,W\in\cL(E)$.
\end{mylist2}
$\rho(V)$ is called the \emph{rank of}~$V$ and $\rho(\cM):=\rho(E)$ the \emph{rank} of the \qM.
\end{defi}

Clearly, $\rho(V+\subspace{x})\in\{\rho(V),\,\rho(V)+1\}$ for all $V,\subspace{x}\in\cL(E)$. The following results will play a crucial role later on.

\begin{prop}[\mbox{\cite[Prop.~6]{JuPe18}, \cite[Lem.~3.2]{BCIJS23}}]\label{P-RhoProp}
Let $\cM=(E,\rho)$ be a \qM{} and $V,W\in\cL(E)$.
\begin{alphalist}
\item Suppose $\rho(V+\subspace{w})=\rho(V)$ for all $w\in W$.
         Then $\rho(V+W)=\rho(V)$.
\item Suppose $W\leq V$. Let $x\in E$ be such that $\rho(W+\subspace{x})=\rho(W)$. Then $\rho(V+\subspace{x})=\rho(V)$.
\end{alphalist}
\end{prop}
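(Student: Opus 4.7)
The plan is to derive both parts from submodularity (R3), combined with the trivial monotonicity bound $\rho(V) \leq \rho(V+W)$. Part (b) is a one-shot application of (R3), while part (a) naturally calls for induction on $\dim W$, with (R3) again as the engine.

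For part (b), the idea is to apply submodularity to the two subspaces $V$ and $W+\subspace{x}$. Since $W \leq V$ by assumption, their sum is $V+\subspace{x}$, and their intersection contains $W$. Hence
\[
\rho(V+\subspace{x}) + \rho\bigl(V\cap(W+\subspace{x})\bigr) \leq \rho(V) + \rho(W+\subspace{x}) = \rho(V) + \rho(W),
\]
using the hypothesis $\rho(W+\subspace{x})=\rho(W)$. Monotonicity applied to $W \leq V\cap(W+\subspace{x})$ gives $\rho(V\cap(W+\subspace{x})) \geq \rho(W)$, and subtracting this yields $\rho(V+\subspace{x}) \leq \rho(V)$. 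Combined with the reverse inequality from monotonicity, we get equality.

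For part (a), I would induct on $\dim W$. The base case $\dim W = 0$ is trivial because $V+W=V$. For the inductive step, I would choose a hyperplane $W' \in \Hyp(W)$ and pick any $w \in W \setminus W'$, so that $W = W' + \subspace{w}$. Since $W' \leq W$, the hypothesis $\rho(V+\subspace{u})=\rho(V)$ holds for every $u\in W'$, so the inductive hypothesis yields $\rho(V+W') = \rho(V)$. The hypothesis also gives $\rho(V+\subspace{w})=\rho(V)$ directly. Now apply (R3) to $A = V+W'$ and $B = V+\subspace{w}$: the sum is $A+B = V+W$ and the intersection satisfies $A\cap B \geq V$, hence $\rho(A\cap B)\geq \rho(V)$. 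Submodularity gives
\[
\rho(V+W) \leq \rho(V+W') + \rho(V+\subspace{w}) - \rho(A\cap B) \leq \rho(V) + \rho(V) - \rho(V) = \rho(V),
\]
and monotonicity supplies the reverse inequality.

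There is no real obstacle in either argument; the only subtlety is choosing the right pair of subspaces in which to apply submodularity so that the sum produces the subspace we care about and the intersection is controlled from below by monotonicity. The cleanness of the induction relies on the fact that the hypothesis in (a) passes automatically to every subspace of $W$, which is what makes the step from $W'$ to $W = W'+\subspace{w}$ go through without any extra assumption.
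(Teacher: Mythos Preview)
Your proof is correct. Note, however, that the paper does not supply its own proof of this proposition; it simply cites \cite[Prop.~6]{JuPe18} and \cite[Lem.~3.2]{BCIJS23}, so there is nothing in the paper to compare your argument against beyond confirming it is sound. Your direct submodularity arguments for both parts are the standard ones. One small observation: once you have~(b), the inductive step in~(a) can be done even more cheaply by applying~(b) with $V$ in the role of~$W$ and $V+W'$ in the role of~$V$, since $V\leq V+W'$ and $\rho(V+\subspace{w})=\rho(V)$ give $\rho((V+W')+\subspace{w})=\rho(V+W')=\rho(V)$ immediately. But your two-subspace submodularity argument in~(a) is perfectly fine as written.
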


\begin{theo}[\mbox{\cite[Thm.~42]{JuPe18}}]\label{T-DualqM}
Let $\inner{\cdot}{\cdot}$ be a non-degenerate symmetric bilinear form (NSBF) on~$E$.
For $V\in\cL(E)$ define $V^\perp=\{w\in E\mid \inner{v}{w}=0\text{ for all }v\in V\}$.
Let $\cM=(E,\rho)$ be a \qM{} and set
\begin{equation}\label{e-rhodual}
    \rho^*(V)=\dim V+\rho(V^\perp)-\rho(E).
\end{equation}
Then $\cM^*=(E,\rho^*)$ is a \qM{}, called the \emph{dual} of~$\cM$ with respect to the chosen NSBF.
Furthermore, $\cM^{**}=\cM$, where $\cM^{**}=(\cM^*)^*$ is the bidual.
\end{theo}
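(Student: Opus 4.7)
The plan is to verify the three rank axioms (R1)--(R3) for $\rho^*$ in order, and then compute $\rho^{**}$ directly. Throughout I will use the standard fact that $\dim V+\dim V^\perp=\dim E$ and $V^{\perp\perp}=V$ for any $V\in\cL(E)$ (consequences of non-degeneracy), together with the observation stated in the paper that $\rho(W+\subspace{x})\in\{\rho(W),\rho(W)+1\}$. An immediate consequence of the latter, obtained by induction on $k=\dim U$ and adding generators of $U$ one at a time, is the key auxiliary estimate
\[
\rho(W+U)\leq \rho(W)+\dim U\quad\text{for all }W,U\in\cL(E).
\]
I would record this as the very first step.

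For (R1), the upper bound $\rho^*(V)\leq\dim V$ follows from $\rho(V^\perp)\leq\rho(E)$ by monotonicity. For the lower bound, pick a complement $U$ of $V^\perp$ in $E$, so that $\dim U=\dim V$ and $E=V^\perp+U$. The auxiliary estimate gives $\rho(E)\leq \rho(V^\perp)+\dim V$, hence $\rho^*(V)\geq 0$. For (R2), let $V\leq W$, so $W^\perp\leq V^\perp$, and $\dim V^\perp-\dim W^\perp=\dim W-\dim V$. The auxiliary estimate applied with $W'=W^\perp$ and a complement of $W^\perp$ in $V^\perp$ yields $\rho(V^\perp)-\rho(W^\perp)\leq\dim W-\dim V$, so $\rho^*(W)-\rho^*(V)=(\dim W-\dim V)-(\rho(V^\perp)-\rho(W^\perp))\geq0$.

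For (R3), I would combine the dimension formula with the identities $(V+W)^\perp=V^\perp\cap W^\perp$ and $(V\cap W)^\perp=V^\perp+W^\perp$. Writing out $\rho^*(V+W)+\rho^*(V\cap W)$ and using $\dim(V+W)+\dim(V\cap W)=\dim V+\dim W$, the claim reduces to submodularity of $\rho$ applied to the pair $V^\perp,W^\perp$:
\[
\rho(V^\perp+W^\perp)+\rho(V^\perp\cap W^\perp)\leq \rho(V^\perp)+\rho(W^\perp).
\]

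Finally, for $\cM^{**}=\cM$, I compute
\[
\rho^{**}(V)=\dim V+\rho^*(V^\perp)-\rho^*(E)
=\dim V+\bigl(\dim V^\perp+\rho(V)-\rho(E)\bigr)-\bigl(\dim E+\rho(\{0\})-\rho(E)\bigr),
\]
using $V^{\perp\perp}=V$ and $E^\perp=\{0\}$. Since $\dim V+\dim V^\perp=\dim E$ and $\rho(\{0\})=0$ by (R1), this collapses to $\rho^{**}(V)=\rho(V)$. I expect no real obstacle here; the only subtlety is the auxiliary estimate, which is a small lemma but the workhorse behind both (R1) and (R2). The remaining steps are essentially bookkeeping, with submodularity of $\rho^*$ inherited from submodularity of $\rho$ via the duality between $+$ and $\cap$ under $\perp$.
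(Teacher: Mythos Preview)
Your proof is correct. Each of (R1)--(R3) is verified cleanly: the auxiliary estimate $\rho(W+U)\leq\rho(W)+\dim U$ (which indeed follows by induction from the one-step bound) handles (R1) and (R2), and (R3) reduces to submodularity of~$\rho$ via the standard identities $(V+W)^\perp=V^\perp\cap W^\perp$ and $(V\cap W)^\perp=V^\perp+W^\perp$. The bidual computation is also fine.

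As for comparison with the paper: the paper does not give a proof of this theorem at all. It is stated with a citation to \cite[Thm.~42]{JuPe18} and used as a known result. So there is no ``paper's own proof'' to compare against; you have supplied the standard verification that the cited reference establishes. Your argument is essentially the same as the one in \cite{JuPe18}, and nothing more is needed here.
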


The dual \qM{} depends on the choice of $\inner{\cdot}{\cdot}$, but different  NSBFs lead to equivalent dual
$q$-matroids in the following sense \cite[Thm.~2.8]{GLJ22Gen}.

\begin{defi}\label{D-Equiv}
Let $E_i,\,i=1,2,$ be $\F$-vector spaces of the same finite dimension and $\cM_i=(E_i,\rho_i)$ be \qM{}s.
We call $\cM_1$ and $\cM_2$ \emph{equivalent}, denoted by $\cM_1\approx\cM_2$, if there exists an $\F$-isomorphism
$\alpha: E_1\longrightarrow E_2$ such that
$\rho_2(\alpha(V))=\rho_1(V)$ for all $V\in\cL(E_1)$.
\end{defi}

\begin{defi}\label{D-Unif}
Let $0\leq k\leq \dim E$. The \emph{uniform \qM{} of rank~$k$ on~$E$} is defined as the \qM{} $\cU_k(E)=(E,\rho)$, where $\rho(V)=\min\{k,\dim V\}$
for all $V\in\cL(E)$. Moreover, $\cU_0(E)$ is called the \emph{trivial \qM} on~$E$ and $\cU_{\dim E}(E)$ is the \emph{free \qM} on~$E$.
We let $\cU_{0,n}$ and $\cU_{n,n}$ denote the trivial and free \qM{} on any $\F_q$-linear  ground space of
dimension~$n$ (hence they are defined only up to equivalence).
\end{defi}

Note that $\cU_{k}(E)^*=\cU_{\dim E-k}(E)$ for any NSBF on~$E$.
In particular, the dual of the free \qM{} is the zero \qM{} and vice versa.

\begin{defi}[\mbox{see for instance \cite[Def.~5.1 and Thm.~5.2]{GLJ22Gen}}]\label{D-RestrContr}
Let $\cM=(E,\rho)$ be a \qM{} and $X\in\cL(E)$.
The \emph{restriction of~$\cM$ to~$X$} and the \emph{contraction of~$X$ from~$\cM$}
are defined as the \qM{}s $\cM|_X=(X,\hat{\rho})$ with $\hat{\rho}(V)=\rho(V)$ for all $V\leq X$ and
$\cM/X=(E/X,\tilde{\rho})$ with $\tilde{\rho}(V/X)=\rho(V+X)-\rho(X)$ for all $V\in\cL(E)$, respectively.
\end{defi}

The following notions of $q$-matroids have been well studied; for details see \cite{JuPe18,BCJ22}.
Let $\cM=(E,\rho)$ be a $q$-matroid. A subspace $I\in\cL(E)$ is called \emph{independent} if $\rho(I)=\dim I$.
Every subspace of an independent space is independent and furthermore,
\begin{equation}\label{e-rhoVI}
    \rho(V)=\max\{\dim I\mid I\leq V,\,I\text{ independent}\}\ \text{ for all }\ V\in\cL(E).
\end{equation}
As a consequence, the collection of independent spaces fully determines the entire \qM{}~$\cM$.
If $V\in\cL(E)$ is not independent it is called \emph{dependent}.
A \emph{basis} of  a subspace~$V$ is an independent space~$I\leq V$ satisfying $\rho(I)=\rho(V)$.
A dependent space all of whose proper subspaces are independent is a \emph{circuit}.
A space is called \emph{open} if it is a sum of circuits (by definition, the empty sum is the zero space).
Thus  any open space $V\in\cL(E)$ is of the form
\begin{equation}\label{e-Vopen}
   V=\sum_{   \genfrac{}{}{0pt}{1}{C\leq V}{C\text{ circuit}     }}\hspace*{-.5em}C.
\end{equation}
A space $F\in\cL(E)$ is a \emph{flat} if it is maximal with respect to its rank value, that is, $\rho(F+\subspace{x})>\rho(F)$ for all $x\in E\setminus F$.
Clearly, $E$ is a flat.
A flat~$F$ is a \emph{hyperplane} if $F\neq E$ and~$E$ is the only flat properly containing~$F$.
We denote by $\cI(\cM)$  the collection of all independent spaces of~$\cM$ and similarly use
$\cD(\cM),\;\cC(\cM),\;\cO(\cM),\;\cF(\cM),\;\cH(\cM)$ for the collections of the other spaces.
Finally, we need the closure operator of~$\cM$ which is defined as
\[
   \cl:\cL(E)\longrightarrow\cL(E),\quad V\longmapsto\hspace*{-.5em}
   \sum_{   \genfrac{}{}{0pt}{1}{x\in E}{\rho(V+\langle x\rangle)=\rho(V)}     }\hspace*{-1em}\subspace{x}.
\]
Clearly, a subspace~$V$ is a flat if and only if $\cl(V)=V$.
Thanks to \cref{P-RhoProp}(a) we have
\begin{equation}\label{e-clrho}
  \rho(\cl(V))=\rho(V)\text{ for all }V\in\cL(E).
\end{equation}

We now list some fundamental properties of \qM{}s.
Part (a) and (b) are in \cite[Cor.~86, Cor.~79]{BCJ22}, and~(c) is a consequence of~(a) and~(b) together with \eqref{e-Vopen}.
Part~(d) is in \cite[Def.~10 and Thm.~44]{BCJ22}. The last parts about the closure function are in \cite[Def.~5 and Cor.~56]{BCJ22}.

\begin{theo}\label{T-Basics}
Let $\cM=(E,\rho)$ be a \qM{} and $\cM^*=(E,\rho^*)$ be its dual with respect to some NSBF $\inner{\cdot}{\cdot}$.
Let $V\in\cL(E)$. Then
\begin{alphalist}
\item $V\in\cO(\cM)\Longleftrightarrow V^\perp\in\cF(\cM^*)$.
\item $V\in\cC(\cM)\Longleftrightarrow V^\perp\in\cH(\cM^*)$.
\item If $V\in\cF(\cM)$, then ${\DS V=\hspace*{-1em}\bigcap_{\genfrac{}{}{0pt}{1}{F\leq H}{H\in\cH(\cM)} }\hspace*{-1em}H}$.
\item $F_1,F_2\in\cF(\cM)\Longrightarrow F_1\cap F_2\in\cF(\cM)\ $ and $\ O_1,O_2\in\cO(\cM)\Longrightarrow O_1+O_2\in\cO(\cM)$
\item The closure operator satisfies for all $V,W\in\cL(E)$
\[
   V\leq\cl(V),\quad V\leq W\Longrightarrow \cl(V)\leq\cl(W),\quad \cl(\cl(V))=\cl(V).
\]
\end{alphalist}
\end{theo}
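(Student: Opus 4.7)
The plan is to prove the five parts in the logical order (e), (a), (b), (c), (d). Part (a) is the main conceptual step; (b) then follows from (a) plus a short rank calculation, (c) and the second half of (d) by duality, and the first half of (d) from (e). For (e), the inclusion $V\leq\cl(V)$ is tautological; monotonicity of $\cl$ is immediate from \cref{P-RhoProp}(b), since $\rho(V+\subspace{x})=\rho(V)$ together with $V\leq W$ forces $\rho(W+\subspace{x})=\rho(W)$; and idempotence follows from \eqref{e-clrho}, because $\rho(\cl(V)+\subspace{x})=\rho(\cl(V))=\rho(V)$ combined with $V\leq\cl(V)$ and monotonicity of $\rho$ forces $\rho(V+\subspace{x})=\rho(V)$.

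For (a), I would translate the condition $V^\perp\in\cF(\cM^*)$ into a rank condition in $\cM$. A direct computation from \eqref{e-rhodual} yields, for $x\in E\setminus V^\perp$,
\[
\rho^*(V^\perp+\subspace{x})-\rho^*(V^\perp)=1+\rho(V\cap\subspace{x}^\perp)-\rho(V),
\]
and since $V\cap\subspace{x}^\perp$ ranges precisely over $\Hyp(V)$ as $x$ varies over $E\setminus V^\perp$, the condition $V^\perp\in\cF(\cM^*)$ is equivalent to $\rho(H)=\rho(V)$ for every $H\in\Hyp(V)$. I would then show that this rank condition is equivalent to $V\in\cO(\cM)$. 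The forward direction argues that for $V=\sum_iC_i$ with $C_i\in\cC(\cM)$ and any $H\in\Hyp(V)$, some $C_j\not\leq H$ exists (else $V\leq H$), whence $V=H+C_j$; submodularity combined with the identity $\rho(H\cap C_j)=\dim C_j-1=\rho(C_j)$ (first equality by dimension counting, second because $C_j$ is a circuit) forces $\rho(V)\leq\rho(H)$. The backward direction sets $V':=\sum_{C\leq V,\,C\in\cC(\cM)}C$ and assumes $V'\lneq V$; choosing $H\in\Hyp(V)$ with $V'\leq H$, a basis $I\leq H$ of $H$, and $x\in V\setminus H$, one checks that $I+\subspace{x}$ is dependent (its rank is at most $\rho(H)=\rho(V)=\rho(I)$ while its dimension is $\rho(I)+1$), hence contains a circuit $C$; independence of $I$ then forces $x\in C\leq V$, giving $C\leq V'\leq H$ and contradicting $x\notin H$.

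Parts (b), (c), and (d) follow by duality. For (b), I would first establish the characterization $H'\in\cH(\cM)$ iff $H'$ is a flat with $\rho(H')=\rho(E)-1$ (short, via the closure operator and the observation that a flat strictly containing another flat has strictly larger rank), and use \eqref{e-rhodual} to rewrite $\rho^*(V^\perp)=\rho^*(E)-1$ as $\rho(V)=\dim V-1$. Via (a), this gives $V^\perp\in\cH(\cM^*)$ iff $V\in\cO(\cM)$ and $\rho(V)=\dim V-1$; under these two conditions, the forward direction of (a) yields $\rho(H)=\dim H$ for every $H\in\Hyp(V)$, so every proper subspace of $V$ is independent and hence $V\in\cC(\cM)$; the converse is immediate. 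For (c), applying (a) to $\cM^*$ (using $\cM^{**}=\cM$) writes $V^\perp$ as a sum of circuits of $\cM^*$; taking $\perp$ and invoking (b) applied to $\cM^*$ expresses $V$ as an intersection of hyperplanes of $\cM$ containing it. For (d), the first half uses (e): $\cl(F_1\cap F_2)\leq\cl(F_i)=F_i$ for $i=1,2$ forces $\cl(F_1\cap F_2)=F_1\cap F_2$; the second half follows from $(O_1+O_2)^\perp=O_1^\perp\cap O_2^\perp$, which is a flat of $\cM^*$ by (a) and the first half of (d), whence $O_1+O_2\in\cO(\cM)$ by (a) again.

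The main obstacle is the backward direction of (a), where one must extract, from the rank hypothesis alone, a circuit containing a prescribed vector $x$. The key is a $q$-analogue of the classical ``unique-circuit augmentation'' fact: a circuit sitting inside $I+\subspace{x}$, with $I$ independent, must contain $x$, since otherwise it would lie in $I$ and contradict independence. Formulating this cleanly in the $q$-matroid setting and combining it with the submodular rank bound on $I+\subspace{x}$ is the step that requires the most care.
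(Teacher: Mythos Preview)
Your overall architecture is sound, and parts (e), (b), (c), (d) are fine. The paper itself does not prove this theorem but cites \cite{BCJ22}, sketching only that (c) follows from (a), (b) and \eqref{e-Vopen}; your derivation of (c) matches that sketch.

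There is, however, a genuine gap in the backward direction of (a). The step ``independence of $I$ then forces $x\in C$'' is false for $q$-matroids: it is the classical statement that a circuit contained in $I\cup\{x\}$ with $I$ independent must contain the element~$x$, but its naive $q$-analogue fails. From $C\leq I+\subspace{x}$ and $C\not\leq I$ one cannot conclude $x\in C$, because~$C$ may be a ``diagonal'' subspace meeting neither~$I$ nor $\subspace{x}$. Concretely, take any $q$-matroid on $\F_q^2$ in which $\subspace{e_1}$ and $\subspace{e_2}$ have rank~$1$ while $\subspace{e_1+e_2}$ has rank~$0$ (this is consistent with (R1)--(R3)); then $I=\subspace{e_1}$ is independent, $x=e_2$, and $C=\subspace{e_1+e_2}$ is a circuit in $I+\subspace{x}$ with $x\notin C$. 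Your own closing paragraph identifies exactly this step as ``the key'', so the gap is not a slip of the pen but the heart of the argument.

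The fix is easy and preserves your strategy. What you actually need is not $x\in C$ but merely $C\not\leq H$. Since $I\leq H$ and $x\notin H$ with $H\in\Hyp(V)$, one has $H\cap(I+\subspace{x})=I$ (a dimension count: $\dim(H\cap(I+\subspace{x}))=\dim H+\dim I+1-\dim V=\dim I$). Independence of~$I$ gives $C\not\leq I$, hence $C\not\leq H$. But $C\leq I+\subspace{x}\leq V$ forces $C\leq V'\leq H$, the desired contradiction. With this correction your proof of (a) goes through.
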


A particularly nice class of \qM{}s are the representable ones.

\begin{defi}[\mbox{\cite[Sec.~5]{JuPe18}}]\label{D-ReprG}
Consider a field extension $\F_{q^m}$ of~$\F=\F_q$ and let $G\in\F_{q^m}^{k\times n}$ be a matrix of rank~$k$.
Define the map $\rho:\cL(\F^n)\longrightarrow\N_0$ via
\[
    \rho(V)=\rk (GY\T),\ \text{ where $Y\in\F^{y\times n}$ such that $V=\rowsp(Y)$}.
\]
Then~$\rho$ satisfies (R1)--(R3) and thus defines a \qM{} $\cM_G=(\F^n,\rho)$ of rank~$k$.
We call $\cM_G$ the \qM{} \emph{represented by~$G$}.
\end{defi}

\section{The Cyclic Core}\label{S-CycCore}
We define the cyclic core of a subspace.
Our definition (\cref{D-CycCore}) is a vector-based $q$-analogue of the classical cyclic core in \cite[p.~395]{FHHW18}.
More precisely,  our cyclic core consists of vectors in the ground space~$E$ (rather than subspaces) that behave in the desired way
with respect to hyperplanes, and it is not hard to see that the so defined set is indeed a subspace.
Making use of duality and the properties of the closure operator, we show that the cyclic core of a subspace~$V$ is the largest open space
contained in~$V$. This also shows immediately that the cyclic-core operator is the dual of the closure operator (\cref{C-cyccl}) and idempotent
(\cref{P-CycProp}).
Subspaces that coincide with their cyclic core are called cyclic spaces, and it turns out that these are exactly the open spaces.

%
As mentioned in the introduction, a theory of the cyclic core was first proposed in \cite{AlBy22}.
Here we develop an alternative approach, which is well suited for our study of the direct sum of \qM{}s in subsequent sections.
We briefly describe the differences and similarities.

\begin{rem}\label{R-AlBy22}
The main difference between~\cite{AlBy22} and our work is the starting point for developing the theory.
In~\cite{AlBy22}, the authors start with the definition of cyclic spaces.
This can easily be done without the need of a cyclic-core operator and is identical to what we call cyclic spaces.
Furthermore, the authors define two concepts that may be regarded as $q$-analogues of the classical cyclic core:
$\text{Cyc}(V)$ is defined as a collection of certain 1-dimensional subspaces (\cite[Def.~2.3]{AlBy22}) and the
cyclic-core operator $\text{cyc}(V)$ is the sum of the cyclic subspaces contained in~$V$ (\cite[Def.~2.8]{AlBy22}).
The relation between these two concepts is clarified in \cite[Prop.~2.18]{AlBy22}, which then also shows that our cyclic-core operator
coincides with the one in~\cite{AlBy22}.
Due to the different routes taken, the authors in \cite[Sec.~2.1]{AlBy22} establish properties of cyclic spaces in order to study their cyclic-core operator, while we establish properties of our cyclic-core operator and obtain those for cyclic spaces as a consequence.
Thus, while the results look almost identical, they arise in different order and their proofs differ in each single case.
\end{rem}

\begin{nota}\label{Nota}
Throughout, let $\inner{\cdot}{\cdot}$ be a fixed NSBF on~$E$, thus $V^\perp$ denotes the orthogonal space of~$V$
with respect to $\inner{\cdot}{\cdot}$.
Let $\cM=(E,\rho)$ be a \qM{} and $\cM^*=(E,\rho^*)$ be its dual with respect to $\inner{\cdot}{\cdot}$
We denote by $\cl(\,\cdot\,)$ and $\cl^*(\,\cdot\,)$ the closure operators of~$\cM$ and $\cM^*$, respectively.
\end{nota}

\begin{defi}\label{D-CycCore}
For $V\in\cL(E)$ we define the \emph{cyclic core} of $V$ as
\[
   \cyc(V)=\{x\in V\mid \rho(W)=\rho(V)\text{ for all $W\leq V$ such that $W+\subspace{x}=V$}\}.
\]
\end{defi}
Note that every subspace $W$ appearing in the definition has codimension at most~$1$ in~$V$.
Clearly, if~$V$ is independent then $\cyc(V)=0$. The converse is true as well as we will see in \cref{T-CycCoreOpen}.

\begin{prop}\label{P-cyclinear}
Let $V\in\cL(E)$. Then $\cyc(V)\in\cL(E)$, i.e., $\cyc(V)$ is a subspace of~$E$.
\end{prop}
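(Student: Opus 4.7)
The plan is to verify the subspace axioms directly from \cref{D-CycCore}. First I note $0 \in \cyc(V)$ trivially: any $W \leq V$ with $W + \subspace{0} = V$ must be $V$ itself, so $\rho(W) = \rho(V)$. Closure under scalar multiplication is immediate as well, since for $\lambda \in \F^\times$ we have $\subspace{\lambda x} = \subspace{x}$, so the defining conditions on~$x$ and $\lambda x$ coincide literally; the case $\lambda = 0$ reduces to the previous observation.

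The only real content is closure under addition. Suppose $x, y \in \cyc(V)$ and let $W \leq V$ be arbitrary with $W + \subspace{x+y} = V$; I must show $\rho(W) = \rho(V)$. The key observation is that from $W + \subspace{x+y} = V$ one has $\dim V \leq \dim W + 1$, so either $x + y \in W$ (and then $W = V$, so nothing to prove) or $W$ has codimension exactly~$1$ in~$V$. In the latter situation I split cases according to whether $x$ or $y$ lies in $W$.

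If $x \in W$, then since $x + y \notin W$ we must have $y \notin W$; because $W$ has codimension~$1$ in~$V$, this forces $W + \subspace{y} = V$, and $y \in \cyc(V)$ gives $\rho(W) = \rho(V)$. The case $y \in W$ is symmetric. If neither $x$ nor $y$ lies in $W$, then codimension~$1$ forces $W + \subspace{x} = V$, and $x \in \cyc(V)$ yields $\rho(W) = \rho(V)$. In every case the rank equality holds, proving $x + y \in \cyc(V)$.

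The main obstacle, such as it is, is spotting that one may assume $W$ has codimension~$1$ in~$V$ (otherwise $x+y$ already lies in $W$ and the conclusion is vacuous); once this reduction is made the case split is short and requires no invocation of submodularity or of \cref{P-RhoProp}. Observe also that the argument does not rely on any NSBF or on the dual \qM{}, so \cref{P-cyclinear} sits at the purely primal level before duality is introduced in the subsequent results.
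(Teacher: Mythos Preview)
Your proof is correct and follows essentially the same approach as the paper: both reduce to the codimension-$1$ case and then use that at least one of $x,y$ lies outside~$W$ to invoke the defining property of $\cyc(V)$. The paper compresses your three-way case split into a single ``we may assume $x\notin W$'', but the logic is identical.
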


\begin{proof}
Clearly $0\in\cyc(V)$ and $\cyc(V)$ is closed under scalar multiplication.
Let $x,y\in\cyc(V)$ and let $W\leq V$ be such that $W+\subspace{x+y}=V$.
We want to show that $\rho(W)=\rho(V)$.
Without loss of generality let $W\neq V$. Then $\dim W=\dim V-1$ and $x+y\not\in W$.
Thus we may assume that $x\not\in W$. Hence $W+\subspace{x}=V$, and using that~$x$ is in $\cyc(V)$ we conclude
$\rho(W)=\rho(V)$.
\end{proof}

We will show below that $\cyc(V)$ is the largest open space contained in~$V$.
We will do so by making use of duality and the closure operator.

\begin{lemma}\label{L-dualClos}
Let $V\in\cL(E)$ and $C_1,\ldots,C_t\in\cC(\cM)$ be the circuits of~$\cM$ contained in~$V$. Then
\[
   \cl^*(V^\perp)=\bigcap_{i=1}^t C_i^\perp.
\]
\end{lemma}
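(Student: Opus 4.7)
The plan is to express $\cl^*(V^\perp)$, which is a flat of $\cM^*$, as an intersection of hyperplanes of $\cM^*$ and then identify those hyperplanes with the duals of the circuits of $\cM$ sitting inside $V$.

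First I would observe that $\cl^*(V^\perp)$ is a flat of $\cM^*$; this is an immediate consequence of the idempotency in \cref{T-Basics}(e). Therefore \cref{T-Basics}(c), applied inside $\cM^*$, gives
\[
   \cl^*(V^\perp)=\bigcap_{\genfrac{}{}{0pt}{1}{H\in\cH(\cM^*)}{\cl^*(V^\perp)\leq H}}\!\! H.
\]
The task is now to match the index set of this intersection with $\{C_1^\perp,\ldots,C_t^\perp\}$.

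Next I would use \cref{T-Basics}(b), which says $\cH(\cM^*)=\{C^\perp\mid C\in\cC(\cM)\}$. Thus every hyperplane of~$\cM^*$ has the form $C^\perp$ for a unique circuit $C$ of~$\cM$. I then need to show
\[
   \cl^*(V^\perp)\leq C^\perp \;\Longleftrightarrow\; C\leq V.
\]
For the forward direction, $\cl^*(V^\perp)\geq V^\perp$, so $C^\perp\geq\cl^*(V^\perp)\geq V^\perp$ gives $C\leq V^{\perp\perp}=V$ by properties of the NSBF. Conversely, if $C\leq V$ then $V^\perp\leq C^\perp$; since $C^\perp$ is itself a flat of $\cM^*$, monotonicity and idempotency of $\cl^*$ yield $\cl^*(V^\perp)\leq\cl^*(C^\perp)=C^\perp$.

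Combining these two observations, the hyperplanes of $\cM^*$ containing $\cl^*(V^\perp)$ are precisely $C_1^\perp,\ldots,C_t^\perp$, and the claim follows. I do not foresee a real obstacle; the one thing to verify is the boundary case $t=0$, i.e. when $V\in\cI(\cM)$. In that case one checks directly from the formula \eqref{e-rhodual} that $\rho^*(V^\perp)=\rho^*(E)$, so $\cl^*(V^\perp)=E$, which matches the usual convention that an empty intersection of subspaces of~$E$ equals~$E$.
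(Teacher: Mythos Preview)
Your proof is correct and follows essentially the same route as the paper's: express the flat $\cl^*(V^\perp)$ as an intersection of hyperplanes of~$\cM^*$ via \cref{T-Basics}(c), translate hyperplanes of~$\cM^*$ to~$C^\perp$ for circuits~$C$ of~$\cM$ via \cref{T-Basics}(b), and use that~$C^\perp$ is a flat of~$\cM^*$ to reduce the condition $\cl^*(V^\perp)\leq C^\perp$ to $C\leq V$. Your explicit treatment of the boundary case $t=0$ is a nice addition that the paper leaves implicit.
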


\begin{proof}
Since $\cl^*(V^\perp)$ is a flat in~$\cM^*$, \cref{T-Basics} implies
\[
  \cl^*(V^\perp)=\bigcap_{\genfrac{}{}{0pt}{1}{\cl^*(V^\perp)\leq H}{H\in\cH(\cM^*)}}\hspace*{-1em}H\quad
    =\bigcap_{\genfrac{}{}{0pt}{1}{\cl^*(V^\perp)\leq C^\perp}{C\in\cC(\cM)} }\hspace*{-1em}C^\perp\quad
    =\bigcap_{\genfrac{}{}{0pt}{1}{C\leq V}{C\in\cC(\cM)} }\hspace*{-.5em}C^\perp,
\]
where the last step follows from the fact that $C^\perp$ is a flat of~$\cM^*$, which implies the equivalences
$\cl^*(V^\perp)\leq C^\perp\Longleftrightarrow V^\perp\leq C^\perp\Longleftrightarrow C\leq V$.
\end{proof}

Now we obtain the following description of the cyclic core.
Using either the theory developed in \cite{AlBy22} or ours, it is easy to verify that it agrees with the one defined in \cite[Def.~2.8]{AlBy22}.

\begin{theo}\label{T-CycCoreOpen}
For $V\in\cL(E)$ we have
\[
   \cyc(V)=\sum_{\genfrac{}{}{0pt}{1}{C\leq V}{C\in\cC(\cM)} }\hspace*{-.5em}C.
\]
Thus $\cyc(V)$ is an open space and in fact the largest open space contained in~$V$.
As a consequence, $V$ is independent if and only if $\cyc(V)=0$.
\end{theo}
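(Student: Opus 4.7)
The plan is to compute $\cyc(V)$ via duality, ultimately showing $\cyc(V)=\cl^*(V^\perp)^\perp$; at that point \cref{L-dualClos} immediately gives
\[
  \cl^*(V^\perp)^\perp=\Bigl(\bigcap_{C\leq V,\,C\in\cC(\cM)}C^\perp\Bigr)^\perp=\sum_{C\leq V,\,C\in\cC(\cM)}C,
\]
which is the desired formula.

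To set up the duality I first reformulate \cref{D-CycCore}. For a nonzero $x\in V$, the subspaces $W\leq V$ with $W+\subspace{x}=V$ are exactly $V$ itself and the hyperplanes $W\in\Hyp(V)$ with $x\notin W$; any such hyperplane satisfies $\rho(W)\in\{\rho(V),\rho(V)-1\}$. Consequently,
\[
  \cyc(V)=\bigcap_{W\in\Hyp(V),\ \rho(W)=\rho(V)-1}W,
\]
with the empty intersection interpreted as $V$. Every hyperplane of $V$ has the form $W_y:=V\cap\subspace{y}^\perp$ for some $y\in E\setminus V^\perp$, and using $(V^\perp+\subspace{y})^\perp=V\cap\subspace{y}^\perp$, a direct computation from \eqref{e-rhodual} yields
\[
  \rho^*(V^\perp+\subspace{y})-\rho^*(V^\perp)=1+\rho(W_y)-\rho(V).
\]
Thus $W_y$ has reduced rank exactly when $\rho^*(V^\perp+\subspace{y})=\rho^*(V^\perp)$, which by the definition of $\cl^*$ together with \eqref{e-clrho} is equivalent to $y\in\cl^*(V^\perp)$.

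Intersecting the $W_y$ over $y\in\cl^*(V^\perp)\setminus V^\perp$ (and noting that $y\in V^\perp$ would contribute only $V$) gives $\cyc(V)=V\cap\cl^*(V^\perp)^\perp=\cl^*(V^\perp)^\perp$, where the second equality uses $V^\perp\leq\cl^*(V^\perp)$ to force $\cl^*(V^\perp)^\perp\leq V$. This proves the main identity. Openness of $\cyc(V)$ is then immediate from \eqref{e-Vopen}, and $\cyc(V)$ is the largest open subspace of $V$ because any open $O\leq V$ is itself a sum of circuits, each lying in $V$ and hence in $\cyc(V)$. For the last claim, $V$ is independent iff every subspace of $V$ is independent iff $V$ contains no circuit (a minimal dependent subspace being a circuit) iff the sum of circuits in $V$ is zero. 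The main obstacle is the duality step: matching reduced-rank hyperplanes of $V$ bijectively with nontrivial cosets in $\cl^*(V^\perp)/V^\perp$, and separately checking the degenerate case $\cl^*(V^\perp)=V^\perp$, which by \cref{T-Basics}(a) is exactly when $V$ itself is open and makes both sides of the target identity equal to $V$.
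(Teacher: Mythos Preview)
Your proof is correct and takes a genuinely different route from the paper's. The paper establishes the identity by two separate inclusions: for ``$\subseteq$'' it picks $x\in\cyc(V)$, takes any $W\in\Hyp(V)$ not containing~$x$, and uses the dual rank computation together with \eqref{e-rhostar} to force $\sum C_i\not\leq W$, hence $x\in\sum C_i$; for ``$\supseteq$'' it checks directly that each circuit $C_i\leq V$ lies in $\cyc(V)$. Only afterwards does the paper deduce \cref{C-cyccl} as a corollary. You reverse this logic: you first rewrite $\cyc(V)$ as the intersection of the reduced-rank hyperplanes of~$V$, parametrize those hyperplanes as $W_y=V\cap\subspace{y}^\perp$, and use the dual-rank identity $\rho^*(V^\perp+\subspace{y})-\rho^*(V^\perp)=1+\rho(W_y)-\rho(V)$ to show that the reduced-rank condition is equivalent to $y\in\cl^*(V^\perp)$. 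This yields $\cyc(V)=\cl^*(V^\perp)^\perp$ in one stroke (i.e., you prove \cref{C-cyccl} \emph{first}), and then \cref{L-dualClos} delivers the sum-of-circuits formula immediately. Your argument is more structural and avoids the inclusion-by-inclusion casework; the paper's argument is perhaps more hands-on and does not require the hyperplane parametrization $W_y=V\cap\subspace{y}^\perp$. Both rely on \cref{L-dualClos} in an essential way.
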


\begin{proof}
Let $C_1,\ldots,C_t\in\cC(\cM)$ be the circuits of~$\cM$ contained in~$V$. We have to show that $\cyc(V)=\sum_{i=1}^t C_i$.
From \cref{L-dualClos} we know that $\cl^*(V^\perp)=\cap_{i=1}^t C_i^\perp$, and thus \eqref{e-clrho} implies
\begin{equation}\label{e-rhostar}
    \rho^*(V^\perp)=\rho^*\Big(\bigcap_{i=1}^t C_i^\perp\Big).
\end{equation}
We now turn to the stated identity.
\\
``$\subseteq$'' Let $x\in\cyc(V)$. Let $W\leq V$ be such that $\dim W=\dim V-1$ and $W+\subspace{x}=V$. Then $\rho(W)=\rho(V)$ and thus
\[
  \rho^*(W^\perp)=\dim W^\perp+\rho(W)-\rho(E)=\dim (V^\perp)+1+\rho(V)-\rho(E)=\rho^*(V^\perp)+1.
\]
Thus $\rho^*(W^\perp)>\rho^*\Big(\bigcap_{i=1}^t C_i^\perp\Big)$, which in turn implies that $W^\perp$ is not a subspace of
$\bigcap_{i=1}^t C_i^\perp$.
Hence $\sum_{i=1}^t C_i\not\leq W$.
Since this is true for every $W\in\Hyp(V)$ not containing~$x$, we conclude that $x\in\sum_{i=1}^t C_i$.
\\
``$\supseteq$'' It suffices to show that each $C_i$ is in $\cyc(V)$. Let $x\in C_i$.
We show that $\rho(W)=\rho(V)$ for all $W\in\Hyp(V)$  such that $W+\subspace{x}=V$.
Choose such a subspace~$W$. Then clearly $x\not\in W$ and thus $C_i\not\leq W$.
Using the containment $C_i\leq V$ we obtain
$\cl^*(V^\perp)\leq \cl^*(C_i^\perp)=C_i^\perp$ and $\cl^*(W^\perp)\not\leq C_i^\perp$.
This implies $\cl^*(V^\perp)\lneq \cl^*(W^\perp)$ and thus
\[
    \rho^*(V^\perp)=\rho^*(\cl^*(V^\perp))<\rho^*(\cl^*(W^\perp))=\rho^*(W^\perp).
\]
Since $\dim V^\perp+1=\dim W^\perp$, submodularity of~$\rho^*$ yields $\rho^*(W^\perp)\leq \rho^*(V^\perp)+1$.
All of this leads to $\rho^*(W^\perp)= \rho^*(V^\perp)+1$.
Now we compute
\begin{align*}
  \rho(V)&=\dim V+\rho^*(V^\perp)-\rho^*(E)=\dim V +\rho^*(W^\perp)-1-\rho^*(E)\\
             &=\dim W+\rho^*(W^\perp)-\rho^*(E)=\rho(W),
\end{align*}
as desired. All of this proves that $x\in\cyc(V)$ and thus $\sum_{i=1}^t C_i\leq \cyc(V)$.
Finally, $\cyc(V)=0$ if and only if $V$ contains no circuits, which means that~$V$ is independent.
\end{proof}

The cyclic-core operator is dual to the closure operator in the following sense.
This also appears in \cite[Lem.~2.23]{AlBy22}.

\begin{cor}\label{C-cyccl}
Let $V\in\cL(E)$. Then $\cyc(V)^\perp=\cl^*(V^\perp)$.
\end{cor}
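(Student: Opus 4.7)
The plan is to obtain the identity by a direct combination of \cref{T-CycCoreOpen} and \cref{L-dualClos}, together with the elementary fact that orthogonality turns sums into intersections.

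First I would let $C_1,\ldots,C_t\in\cC(\cM)$ be the circuits of~$\cM$ contained in~$V$ (possibly $t=0$, in which case both sides of the claimed identity reduce to~$E$, since then $\cyc(V)=0$ by \cref{T-CycCoreOpen} and $V$ is independent, which via \cref{T-Basics}(a) corresponds to $V^\perp$ being a flat of all of~$E$ after noting the empty sum/intersection conventions; alternatively one handles this trivial case separately). By \cref{T-CycCoreOpen},
\[
   \cyc(V)=\sum_{i=1}^t C_i,
\]
and by \cref{L-dualClos},
\[
   \cl^*(V^\perp)=\bigcap_{i=1}^t C_i^\perp.
\]

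Next I would invoke the standard identity for a non-degenerate symmetric bilinear form on a finite-dimensional space,
\[
   \Bigl(\sum_{i=1}^t C_i\Bigr)^\perp=\bigcap_{i=1}^t C_i^\perp,
\]
which holds because $x$ annihilates every $C_i$ if and only if it annihilates their sum. Applying this to $\cyc(V)$ yields
\[
   \cyc(V)^\perp=\Bigl(\sum_{i=1}^t C_i\Bigr)^\perp=\bigcap_{i=1}^t C_i^\perp=\cl^*(V^\perp),
\]
as required.

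There is essentially no obstacle: the two previous results have already done all the work, and the corollary is a one-line combination. The only thing to be slightly careful about is the empty-circuit case, where one should check that the conventions for empty sums (giving the zero subspace) and empty intersections (giving the ambient space~$E$) are mutually compatible via $0^\perp=E$, so that the identity still holds trivially.
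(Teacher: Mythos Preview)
Your proof is correct and follows exactly the same route as the paper: combine \cref{T-CycCoreOpen} and \cref{L-dualClos} via the standard identity $\bigl(\sum_i C_i\bigr)^\perp=\bigcap_i C_i^\perp$. The paper handles the empty-circuit case implicitly through the same conventions you note.
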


\begin{proof}
Let $C_1,\ldots,C_t\in\cC(\cM)$ be the circuits contained in~$V$.
With the aid of \cref{L-dualClos} and \cref{T-CycCoreOpen} we compute
$\cl^*(V^\perp)= \bigcap_{i=1}^t C_i^\perp=\big(\sum_{i=1}^t C_i\big)^\perp=\cyc(V)^\perp$.
\end{proof}

The previous results imply that  open spaces of~$\cM$ form a lattice $(\cO(\cM),<,\wedge,\vee)$ with $V\wedge W=\cyc(V\cap W)$ and
$V\vee W=V+W$. It is the dual of the lattice of flats of~$\cM^*$; for the latter see \cite[Thm.~3.10 and 3.13]{BCIJS23}.
We do not need this result in this paper.

The following is immediate with \cref{T-CycCoreOpen} or \cref{C-cyccl}; see also~\cite[Thm.~2.10]{AlBy22}.

\begin{cor}\label{P-CycProp}
Let $V,W\in\cL(E)$. Then
\begin{alphalist}
\item $V\leq W\Longrightarrow \cyc(V)\leq\cyc(W)$.
\item $\cyc(\cyc(V))=\cyc(V)$.
\end{alphalist}
\end{cor}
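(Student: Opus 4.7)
The plan is to derive both parts directly from \cref{T-CycCoreOpen}, which characterizes $\cyc(V)$ as the sum of all circuits contained in $V$, equivalently as the largest open space contained in $V$.

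For part (a), I would simply observe that if $V\leq W$, then every circuit $C\in\cC(\cM)$ with $C\leq V$ also satisfies $C\leq W$. Hence the set of circuits contributing to the sum for $\cyc(V)$ is a subset of the set of circuits contributing to the sum for $\cyc(W)$, so
\[
  \cyc(V)=\hspace*{-.5em}\sum_{\genfrac{}{}{0pt}{1}{C\leq V}{C\in\cC(\cM)}}\hspace*{-.5em}C\ \leq\ \sum_{\genfrac{}{}{0pt}{1}{C\leq W}{C\in\cC(\cM)}}\hspace*{-.5em}C=\cyc(W).
\]

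For part (b), I would use that $\cyc(V)$ is itself an open space by \cref{T-CycCoreOpen}. Since $\cyc(\cyc(V))$ is the largest open space contained in $\cyc(V)$ and $\cyc(V)$ is an open space contained in itself, equality follows. More concretely, every circuit contained in $\cyc(V)$ is certainly contained in $V$, so it contributes to the sum defining $\cyc(V)$; conversely every circuit contained in $V$ lies in $\cyc(V)$ (as it is one of the summands). Thus the sets of circuits used in $\cyc(\cyc(V))$ and $\cyc(V)$ coincide.

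As an alternative route, one could dualize via \cref{C-cyccl}: from $\cyc(V)^\perp=\cl^*(V^\perp)$, part (a) becomes a consequence of the monotonicity of $\cl^*$ (noting that $V\leq W$ gives $W^\perp\leq V^\perp$, whence $\cl^*(W^\perp)\leq \cl^*(V^\perp)$ and, after taking perps, $\cyc(V)\leq\cyc(W)$), while part (b) follows from the idempotence of $\cl^*$ stated in \cref{T-Basics}(e). Either route is essentially bookkeeping; there is no serious obstacle, which is why the authors present the result as an immediate corollary rather than a standalone theorem.
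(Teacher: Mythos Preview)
Your proposal is correct and matches the paper's approach exactly: the paper states that the corollary is immediate from \cref{T-CycCoreOpen} or \cref{C-cyccl}, and you have spelled out both of these routes. There is nothing to add.
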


In the context of the cyclic core, it is natural to cast the following definition.

\begin{defi}
A subspace $V\in\cL(E)$ is \emph{cyclic} if $\cyc(V)=V$.
\end{defi}

\cref{T-CycCoreOpen}  and \cref{D-CycCore} show that any $V\in\cL(E)$ satisfies
\begin{equation}\label{e-cyclicopen}
  V\text{ is open }\Longleftrightarrow V\text{ is cyclic }\Longleftrightarrow
  \rho(W)=\rho(V)\text{ for all $W\in\Hyp(V)$.}
\end{equation}
We will use ``open'' and ``cyclic'' interchangeably and use the notation $\cO(\cM)$ for the collection of cyclic spaces of~$\cM$.

Dualizing the identity $\rho(V)=\rho(\cl(V))$ provides us with part~(a) of the next result;  see also \cite[Lem.~2.16 and~2.17]{AlBy22}.

\begin{prop}\label{P-VcycVFormula}
Let $V\in\cL(E)$.
\begin{alphalist}
\item $\dim V-\rho(V)=\dim\cyc(V)-\rho(\cyc(V))$.
\item Let $V=\cyc(V)\oplus W$. Then $\rho(V)=\rho(\cyc(V))+\dim W$ and $\dim W=\rho(W)$, i.e.,~$W$ is independent.
\end{alphalist}
\end{prop}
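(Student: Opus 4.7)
The plan is to prove part (a) by dualizing the identity $\rho(V)=\rho(\cl(V))$ from \eqref{e-clrho} via \cref{C-cyccl}, and then to derive part (b) from part (a) together with submodularity and (R1). Neither step looks like it will be a real obstacle; the only thing to be careful about is bookkeeping the dimension/rank accounting through the duality formula \eqref{e-rhodual}.

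For part (a), I would first apply \eqref{e-clrho} in the dual \qM{}~$\cM^*$ to $V^\perp$ to obtain $\rho^*(V^\perp)=\rho^*(\cl^*(V^\perp))$. By \cref{C-cyccl} we have $\cl^*(V^\perp)=\cyc(V)^\perp$, so this reads $\rho^*(V^\perp)=\rho^*(\cyc(V)^\perp)$. Now I would expand both sides using the duality formula \eqref{e-rhodual}, namely $\rho^*(U^\perp)=\dim E-\dim U+\rho(U)-\rho(E)$, applied with $U=V$ and with $U=\cyc(V)$. Cancelling the common terms $\dim E$ and $\rho(E)$ leaves precisely the identity $\dim V-\rho(V)=\dim\cyc(V)-\rho(\cyc(V))$.

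For part (b), write $V=\cyc(V)\oplus W$, so $\dim V=\dim\cyc(V)+\dim W$. Rearranging the identity in~(a) gives
\[
  \rho(V)=\rho(\cyc(V))+\dim V-\dim\cyc(V)=\rho(\cyc(V))+\dim W.
\]
On the other hand, since $\cyc(V)\cap W=0$ and $\rho$ is nonnegative, submodularity (R3) yields
\[
  \rho(V)=\rho(\cyc(V)+W)\leq\rho(\cyc(V))+\rho(W),
\]
while (R1) gives $\rho(W)\leq\dim W$. Chaining these inequalities produces
\[
  \rho(\cyc(V))+\dim W=\rho(V)\leq\rho(\cyc(V))+\rho(W)\leq\rho(\cyc(V))+\dim W,
\]
so equality holds throughout. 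The first equation is the claimed rank formula, and from $\rho(W)=\dim W$ we conclude that $W$ is independent.
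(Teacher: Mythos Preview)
Your proposal is correct and follows essentially the same route as the paper: for (a) both you and the paper dualize \eqref{e-clrho} via \cref{C-cyccl} and unwind the duality formula \eqref{e-rhodual}, and for (b) both combine the rearranged identity from~(a) with submodularity (and implicitly (R1)) to force $\rho(W)=\dim W$. Your write-up is slightly more explicit in recording the intermediate inequality $\rho(W)\leq\dim W$, but the argument is the same.
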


\begin{proof}
(a) follows from
\begin{align*}
   \dim V^\perp+\rho(V)-\rho(E)&=\rho^*(V^\perp)=\rho^*(\cl^*(V^\perp))=\rho^*(\cyc(V)^\perp)\\
      &=\dim\cyc(V)^\perp+\rho(\cyc(V))-\rho(E).
\end{align*}
(b) The first part is a consequence of~(a) because $\dim W=\dim V-\dim\cyc(V)$.
Now submodularity implies $\rho(\cyc(V))+\dim W=\rho(V)\leq \rho(\cyc(V))+\rho(W)$, and thus $\rho(W)=\dim W$.
\end{proof}

\section{The Lattice of Cyclic Flats}\label{S-CycFlats}
We turn to the notion of cyclic flats, which are simply flats that are also cyclic spaces.
Thanks to the closure operator and cyclic-core operator, the collection of cyclic flats turns into a lattice.
The main result of this section states that the collection of cyclic flats along with their rank values uniquely determine the \qM{}.
This section is the relatively standard $q$-analogue of the theory of cyclic flats for matroids as can be found in \cite{BoDM08} or
\cite[Sec.~2.4]{FHHW18}.
The first two results appear also, with slightly different proofs, in \cite[Sec.~2.2]{AlBy22}.
Thereafter we take a different route than~\cite{AlBy22} by focusing on the rank function.

We continue with the setting from \cref{Nota}.
Our first result states that the cyclic-core operator preserves flatness and the closure operator preserves cyclicity.
The corresponding fact for classical matroids is only mentioned in passing at \cite[Sec.~2.4]{FHHW18}.

\begin{lemma}\label{L-FlatCyclic}
Recall the collections $\cF(\cM)$ of flats and $\cO(\cM)$ of open (i.e., cyclic) spaces of~$\cM$.
\begin{alphalist}
\item $F\in\cF(\cM)\Longrightarrow\cyc(F)\in\cF(\cM)$.
\item $F\in\cO(\cM)\Longrightarrow\cl(F)\in\cO(\cM)$.
\end{alphalist}
As a consequence, every $V\in\cL(E)$ satisfies $\cl(\cyc(V))\leq \cyc(\cl(V))$ and both spaces  are elements of the intersection $\cF(\cM)\cap\cO(\cM)$.
\end{lemma}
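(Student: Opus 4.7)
The plan is to prove (b) first by a direct hyperplane argument using the characterization \eqref{e-cyclicopen}, then obtain (a) by dualising through Corollary~\ref{C-cyccl} and Theorem~\ref{T-Basics}(a), and finally combine the two parts with the already established properties of $\cl$ and $\cyc$ to read off the consequence.

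First I would tackle (b). Assume $F\in\cO(\cM)$; by \eqref{e-cyclicopen} I need to show that $\rho(W)=\rho(\cl(F))$ for every $W\in\Hyp(\cl(F))$. The inequality $\rho(W)\leq\rho(\cl(F))=\rho(F)$ is automatic from monotonicity and \eqref{e-clrho}, so the task reduces to producing the matching lower bound. If $F\leq W$ it is immediate from monotonicity. Otherwise $F+W=\cl(F)$, and a dimension count forces $F\cap W$ to be a hyperplane of~$F$; since $F$ is cyclic, \eqref{e-cyclicopen} yields $\rho(F\cap W)=\rho(F)$, and submodularity applied to the pair $F,W$ then gives $\rho(W)\geq\rho(F+W)+\rho(F\cap W)-\rho(F)=\rho(\cl(F))$, closing the case.

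For (a) I would argue by duality. If $F\in\cF(\cM)$, then $F^\perp\in\cO(\cM^*)$ by Theorem~\ref{T-Basics}(a). Corollary~\ref{C-cyccl} gives $\cyc(F)^\perp=\cl^*(F^\perp)$, and part~(b) applied to~$\cM^*$ with the open input $F^\perp$ shows that $\cl^*(F^\perp)$ is open in~$\cM^*$. A final application of Theorem~\ref{T-Basics}(a) translates this back to $\cyc(F)\in\cF(\cM)$.

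For the consequence, note that $\cl(\cyc(V))$ is a flat because every closure is a flat, and it is open by applying (b) to the open space $\cyc(V)$; symmetrically, $\cyc(\cl(V))$ is open by Theorem~\ref{T-CycCoreOpen} and is a flat by (a) applied to the flat $\cl(V)$. The chain $\cyc(V)\leq V\leq\cl(V)$ and monotonicity of~$\cl$ give $\cl(\cyc(V))\leq\cl(V)$; since $\cl(\cyc(V))$ is open and $\cyc(\cl(V))$ is the largest open subspace of~$\cl(V)$ by Theorem~\ref{T-CycCoreOpen}, the desired inclusion follows.

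The main obstacle will be the hyperplane analysis inside (b), specifically recognising that a hyperplane $W$ of $\cl(F)$ not containing~$F$ must meet~$F$ in a hyperplane of~$F$; once this is in place, everything else is either monotonicity, submodularity, or a mechanical application of the duality between $\cl^*$ and $\cyc$ established in the previous section.
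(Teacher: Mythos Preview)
Your argument is correct, but it is the mirror image of the paper's: the paper proves (a) directly and then obtains (b) by duality, whereas you prove (b) directly and obtain (a) by duality. Your direct hyperplane argument for (b) is particularly clean --- it needs only the characterization \eqref{e-cyclicopen}, the dimension formula, and one application of submodularity --- while the paper's direct proof of (a) works from \cref{D-CycCore} and requires a two-case analysis on whether $x\in F$, invoking \cref{P-RhoProp}(b) in the second case. Either order of course yields both statements. For the consequence the two routes are essentially the same idea read in opposite directions: the paper uses that $\cyc(\cl(V))$ is a flat containing $\cyc(V)$ and then applies the closure axiom from \cref{T-Basics}(e); you use that $\cl(\cyc(V))$ is an open subspace of $\cl(V)$ and then invoke the maximality statement in \cref{T-CycCoreOpen}. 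Both yield the inclusion $\cl(\cyc(V))\leq\cyc(\cl(V))$ and the membership of both spaces in $\cF(\cM)\cap\cO(\cM)$.
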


The spaces $\cl(\cyc(V))$ and $\cyc(\cl(V))$ are in general not identical. See \cref{E-UniMatr} below.

\begin{proof}
(a) Let $x\in E\setminus\cyc(F)$. We have to show that $\rho(\cyc(F)+\subspace{x})>\rho(\cyc(F))$.
\\
i) If $x\not\in F$, then $\rho(F+\subspace{x})=\rho(F)+1$, since $F$ is a flat, and the desired inequality follows from \cref{P-RhoProp}(b).
\\
ii) Let $x\in F\setminus\cyc(F)$. The definition of $\cyc(F)$ implies the existence of a space $\hat{W}\in\Hyp(F)$ such that
$x\not\in\hat{W}$ and $\rho(\hat{W}+\subspace{x})=\rho(\hat{W})+1$. Thus, $\rho(F)=\rho(\hat{W})+1$.
Assume by contradiction that $\rho(\cyc(F)+\subspace{x})=\rho(\cyc(F))$.
Then \cref{P-RhoProp}(b) implies $\rho(W+\subspace{x})=\rho(W)$ for all subspaces~$W$ containing $\cyc(F)$.
Hence we conclude $\cyc(F)\not\leq \hat{W}$.
Thus there exists $y\in\cyc(F)\setminus\hat{W}$. Now $\hat{W}+\subspace{y}=F$, and
$\rho(\hat{W})=\rho(F)$ since $y\in\cyc(F)$. This contradicts the above, and thus $\rho(\cyc(F)+\subspace{x})>\rho(\cyc(F))$.
\\
(b) Let $\cyc^*$ be the cyclic-core operator of the dual \qM~$\cM^*$. With the aid of \cref{T-Basics}(a), \cref{C-cyccl} and Part (a) we conclude
\[
   F\in\cO(\cM)\Rightarrow F^\perp\in\cF(\cM^*)\Rightarrow\cyc^*(F^\perp)\in\cF(\cM^*)\Rightarrow
  \cl(F)^\perp\in\cF(\cM^*)\Rightarrow\cl(F)\in\cO(\cM).
\]
As for the consequence note that $\cl(V)$ is a flat and thus (a) together with \cref{P-CycProp} implies that $\cyc(\cl(V))$ is a flat containing $\cyc(V)$.
Now the stated containment follows from \cref{T-Basics}(e). The rest is clear.
\end{proof}

Using the cyclic-core operator and the closure operator we obtain a lattice consisting of the cyclic flats.
It also appears in \cite[Prop.~2.24]{AlBy22} and is the $q$-analogue of \cite[Prop.~3]{FHGHW21} (which refers to \cite{BoDM08}).

\begin{cor}\label{C-CycFlats}
Let $\cZ(\cM)=\cF(\cM)\cap\cO(\cM)$, that is, $\cZ(\cM)$ is the collection of cyclic flats of~$\cM$, or, alternatively, of open and closed spaces.
Then $(\cZ(\cM),\leq,\wedge,\vee)$ is a lattice, where the meet and join are defined as
\[
   Z_1\wedge Z_2=\cyc(Z_1\cap Z_2)\ \text{ and }\ Z_1\vee Z_2=\cl(Z_1+Z_2)\ \text{ for all }Z_1,Z_2\in\cZ(\cM).
\]
The rank values of the meet and join are given by
\[
  \rho(Z_1\wedge Z_2)= \rho(Z_1\cap Z_2)-\dim\big((Z_1\cap Z_2)/Z_1\wedge Z_2\big) \ \text{ and }\ \rho(Z_1\vee Z_2)=\rho(Z_1+Z_2).
\]
As a consequence
\[
   \rho(Z_1)+\rho(Z_2)\geq \rho(Z_1\vee Z_2)+\rho(Z_1\wedge Z_2)+\dim\big((Z_1\cap Z_2)/Z_1\wedge Z_2\big).
\]
\end{cor}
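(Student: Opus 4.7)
The plan is to verify each assertion in turn by assembling results already established in the excerpt, with no single step requiring a new idea.

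First, to see that $\cZ(\cM)$ is closed under the proposed meet and join I would argue: since $Z_1,Z_2\in\cF(\cM)$, \cref{T-Basics}(d) gives $Z_1\cap Z_2\in\cF(\cM)$, and then \cref{L-FlatCyclic}(a) (together with \cref{T-CycCoreOpen}, which says every cyclic core is open) shows $\cyc(Z_1\cap Z_2)\in\cF(\cM)\cap\cO(\cM)=\cZ(\cM)$. Dually, since $Z_1,Z_2\in\cO(\cM)$, \cref{T-Basics}(d) gives $Z_1+Z_2\in\cO(\cM)$, and \cref{L-FlatCyclic}(b) (combined with the fact that the closure of any space is a flat) yields $\cl(Z_1+Z_2)\in\cZ(\cM)$. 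To confirm that these are truly the meet and join, note $\cyc(Z_1\cap Z_2)\le Z_1\cap Z_2\le Z_i$, and if $Y\in\cZ(\cM)$ satisfies $Y\le Z_1,Z_2$, then $Y\le Z_1\cap Z_2$, so by \cref{P-CycProp}(a) and cyclicity of $Y$ we get $Y=\cyc(Y)\le\cyc(Z_1\cap Z_2)$. The join argument is symmetric: $\cl(Z_1+Z_2)\ge Z_1+Z_2\ge Z_i$, and if $Y\in\cZ(\cM)$ is an upper bound of $Z_1,Z_2$, then $Y\supseteq Z_1+Z_2$, hence by monotonicity of $\cl$ (\cref{T-Basics}(e)) and flatness of $Y$ we get $Y=\cl(Y)\ge\cl(Z_1+Z_2)$.

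For the rank formula on the meet, I would apply \cref{P-VcycVFormula}(a) to $V:=Z_1\cap Z_2$: this gives $\dim V-\rho(V)=\dim\cyc(V)-\rho(\cyc(V))$, which rearranges immediately to
\[
\rho(Z_1\wedge Z_2)=\rho(\cyc(Z_1\cap Z_2))=\rho(Z_1\cap Z_2)-\bigl(\dim(Z_1\cap Z_2)-\dim(Z_1\wedge Z_2)\bigr),
\]
which is the desired identity. For the rank formula on the join, I would invoke \eqref{e-clrho}, namely $\rho(\cl(V))=\rho(V)$, applied to $V:=Z_1+Z_2$, giving $\rho(Z_1\vee Z_2)=\rho(Z_1+Z_2)$ directly.

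Finally, the consequence is just submodularity (R3) rewritten with these formulas: from $\rho(Z_1)+\rho(Z_2)\ge\rho(Z_1+Z_2)+\rho(Z_1\cap Z_2)$, I would substitute $\rho(Z_1+Z_2)=\rho(Z_1\vee Z_2)$ on the right and expand $\rho(Z_1\cap Z_2)=\rho(Z_1\wedge Z_2)+\dim\bigl((Z_1\cap Z_2)/(Z_1\wedge Z_2)\bigr)$ from the meet formula just established. Since every step above is a direct citation of prior results, I do not anticipate any real obstacle; the only moment requiring care is checking that $\cyc(Z_1\cap Z_2)$ and $\cl(Z_1+Z_2)$ are simultaneously closed and open, for which the interplay of \cref{T-Basics}(d) with \cref{L-FlatCyclic} is exactly what is needed.
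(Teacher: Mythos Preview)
Your proposal is correct and follows essentially the same route as the paper's proof: closure of $\cZ(\cM)$ under $\wedge,\vee$ via \cref{T-Basics}(d) and \cref{L-FlatCyclic}, the universal properties via monotonicity of $\cyc$ and $\cl$, the rank formulas via \cref{P-VcycVFormula}(a) and \eqref{e-clrho}, and the final inequality via submodularity. The only cosmetic difference is that for the meet's universal property the paper invokes \cref{T-CycCoreOpen} directly (``$\cyc(V)$ is the largest open space contained in~$V$''), whereas you use \cref{P-CycProp}(a) together with $Y=\cyc(Y)$; these are interchangeable.
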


\begin{proof}
By \cref{T-Basics}(d) $Z_1\cap Z_2$ is a flat and $Z_1+Z_2$ is an open space. Hence $Z_1\wedge Z_2$ and $Z_1\vee Z_2$ are in $\cZ(\cM)$ thanks to \cref{L-FlatCyclic}.
Next, if $V\in\cZ(\cM)$ satisfies $V\leq Z_i$ for $i=1,2$, then $V$ is an open space in $Z_1\cap Z_2$ and thus $V\leq\cyc(Z_1\cap Z_2)$ thanks to
\cref{T-CycCoreOpen}.
Similarly, if $W\in\cZ(\cM)$ satisfies $Z_i\leq W$ for $i=1,2$, then~$W$ is a closed space containing $Z_1+Z_2$ and thus $\cl(Z_1+Z_2)\leq W$ by
\cref{T-Basics}(e). Thus  $(\cZ(\cM),\leq,\wedge,\vee)$ is a lattice.
The rank value of $Z_1\vee Z_2$ follows from \eqref{e-clrho}, while the rank value of $Z_1\wedge Z_2$ is a consequence of \cref{P-VcycVFormula}(a).
The last inequality now follows from submodularity of~$\rho$ applied to $Z_1\cap Z_2$ and $Z_1+Z_2$.
\end{proof}

Note that $\cZ=\cZ(\cM)$ is not empty.
The least and greatest elements of  the lattice $\cZ$ are given by $0_{\cZ}=\cl(0)$ and $1_{\cZ}=\cyc(E)$ and their rank values are
\[
    \rho(0_{\cZ})=0\ \text{ and }\ \rho(1_{\cZ})=\rho(E)-\dim(E/\cyc(E)),
\]
where the second part follows from \cref{P-VcycVFormula}.
In \cref{E-CycFlatsStart} below we will see that the lattice~$\cZ$ is in general not semi-modular, and thus not graded.
But even in the case where $\cZ$ is graded, its height function does not agree, in general, with the rank function~$\rho$.

\begin{exa}[see also \mbox{\cite[Prop.~2.30]{AlBy22}}]\label{E-UniMatr}
Let $0< k< n=\dim E$ and consider the uniform \qM{} $\cU_k(E)$; see \cref{D-Unif}.
The only flats other than $E$  are the spaces of dimension at most $k-1$, and the only nonzero cyclic spaces are the spaces of
dimension at least $k+1$; see \eqref{e-cyclicopen}.
Thus $\cZ:=\cZ(\cU_k(E))=\{0,E\}$.
Since this is true for every~$k$, this shows that the collection $\cZ(\cM)$ of a \qM{}~$\cM$ does not uniquely determine~$\cM$.
Furthermore, since $\rho(E)=k$, we also see that the height function of the lattice $\cZ$
does not agree with the rank function of~$\cU_{k}(E)$ (unless $k=1$).
Finally, for any $k$-dimensional subspace~$V$ we have $\cl(\cyc(V))=\cl(0)=0$, and
$\cyc(\cl(V))=\cyc(E)=E$.
This shows that $\cl(\cyc(V))$ and $\cyc(\cl(V))$ do not agree.
For the trivial and the free \qM{} we have $\cZ(\cU_0(E))=\{E\}$ and $\cZ(\cU_n(E))=\{0\}$.
\end{exa}

Below we will show that we can reconstruct the entire \qM{} $\cM$ by way of the cyclic flats along with their rank values.
The following lemma will be needed. It is the $q$-analogue of \cite[Lem.~5]{FHGHW21} and also appears in \cite[Lem.~2.23]{AlBy22},
where it is proven with the aid of a characterization of cyclic spaces as inclusion-minimal spaces with respect to the nullity function.

\begin{lemma}\label{L-VclcycV}
Let $V\in\cL(E)$. Then $V\cap\cl(\cyc(V))=\cyc(V)$.
\end{lemma}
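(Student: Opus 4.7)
The plan is to set $U := V\cap\cl(\cyc(V))$, observe the trivial containment $\cyc(V)\leq U$ (using $\cyc(V)\leq V$ by definition and $\cyc(V)\leq\cl(\cyc(V))$ by \cref{T-Basics}(e)), and then establish the reverse containment $U\leq\cyc(V)$ by comparing dimensions. The identity to exploit is \cref{P-VcycVFormula}(a), which applied to $U$ yields
\[
  \dim U-\rho(U)=\dim\cyc(U)-\rho(\cyc(U)).
\]
So the proof reduces to showing that $\cyc(U)=\cyc(V)$ and $\rho(U)=\rho(\cyc(V))$, because together these two facts force $\dim U=\dim\cyc(V)$, and then $\cyc(V)\leq U$ together with equal dimensions gives $U=\cyc(V)$.

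For the equality $\cyc(U)=\cyc(V)$: the inclusion $U\leq V$ yields $\cyc(U)\leq\cyc(V)$ by \cref{P-CycProp}(a). Conversely, $\cyc(V)$ is an open space (by \cref{T-CycCoreOpen}) contained in~$U$, and \cref{T-CycCoreOpen} identifies $\cyc(U)$ as the largest open subspace of~$U$, so $\cyc(V)\leq\cyc(U)$. For the equality $\rho(U)=\rho(\cyc(V))$: from the sandwich $\cyc(V)\leq U\leq\cl(\cyc(V))$, monotonicity of~$\rho$ gives $\rho(\cyc(V))\leq\rho(U)\leq\rho(\cl(\cyc(V)))$, and by \eqref{e-clrho} the outer terms coincide.

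Combining these two observations with \cref{P-VcycVFormula}(a) applied to~$U$ on the one hand and to~$V$ on the other (or directly using that $\cyc(U)=\cyc(V)$ makes the right-hand side the same) yields $\dim U=\dim\cyc(V)$, and hence $U=\cyc(V)$ as desired. I do not expect a real obstacle here; the only point that requires some care is to make sure each ingredient -- monotonicity of $\cyc$, idempotence of $\cyc$, the characterization of $\cyc(V)$ as the maximal open subspace of~$V$, and the rank identity $\dim V-\rho(V)=\dim\cyc(V)-\rho(\cyc(V))$ -- is already available in the preceding material, which it is.
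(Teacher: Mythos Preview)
Your proof is correct and follows essentially the same approach as the paper: both arguments establish the nontrivial containment by invoking \cref{P-VcycVFormula}(a) together with the facts that $\cyc(V)$ is the maximal open subspace and that $\rho$ is constant between $\cyc(V)$ and its closure. The only cosmetic difference is that the paper works pointwise (applying the identity to $\subspace{x}+\cyc(V)$ for each $x\in V\cap\cl(\cyc(V))$), whereas you apply it once to the whole intersection $U$; your global version is arguably a bit cleaner.
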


\begin{proof}
``$\supseteq$'' is clear. For the other containment let $x\in V\cap \cl(\cyc(V))$.  Then $\subspace{x}+\cyc(V)\leq V$ and
$\rho(\subspace{x}+\cyc(V))=\rho(\cyc(V))$ because $x$ is in the closure of $\cyc(V)$.
Furthermore,
$\cyc(V)=\cyc(\cyc(V))\leq\cyc(\subspace{x}+\cyc(V))\leq\cyc(V)$, and thus $\cyc(V)=\cyc(\subspace{x}+\cyc(V))$.
With the aid of \cref{P-VcycVFormula}(a) we compute
\begin{align*}
   \dim\cyc(V)-\rho(\cyc(V))&\leq \dim(\subspace{x}+\cyc(V))-\rho(\cyc(V))\\
   &=\dim(\subspace{x}+\cyc(V))-\rho(\subspace{x}+\cyc(V))\\
   &=\dim(\cyc(\subspace{x}+\cyc(V)))-\rho(\cyc(\subspace{x}+\cyc(V)))\\
   &=\dim\cyc(V)-\rho(\cyc(V)).
\end{align*}
Hence we have equality in the first step, and this means $x\in\cyc(V)$.
\end{proof}

Now we arrive at the following $q$-analogue of \cite[Lem.~3.1(i)]{BoDM08} characterizing independent spaces.
This characterization will be crucial because it
will allow us to derive the entire rank function of the \qM{} from the cyclic flats and their rank values.
This is different from the approach taken in \cite{BoDM08} and in \cite{AlBy22} for \qM{}s, where
the entire ($q$-)matroid is reconstructed through the lattice of flats.

\begin{theo}\label{T-ZDeterminesM}
Consider the collection $\cZ(\cM)$ of cyclic flats of~$\cM$. Let $V\in\cL(E)$. Then
\[
    V\text{ is independent}\Longleftrightarrow \dim(V\cap Z)\leq \rho(Z)\ \text{ for all }\ Z\in\cZ(\cM).
\]
Thus the cyclic flats together with their rank values fully determine the collection of independent spaces and thus the entire \qM{} $\cM$.
\end{theo}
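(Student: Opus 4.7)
The plan is to prove both implications directly, with the forward direction being routine and the reverse direction relying on the machinery built up in \cref{S-CycCore,S-CycFlats}, in particular \cref{P-VcycVFormula}(a), \cref{L-VclcycV}, and \cref{L-FlatCyclic}.

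For the forward direction, suppose $V$ is independent. Then every subspace of~$V$ is independent, so for any $Z\in\cZ(\cM)$ the intersection $V\cap Z$ is independent, yielding $\dim(V\cap Z)=\rho(V\cap Z)\leq\rho(Z)$ by monotonicity. This part uses nothing beyond the basic definition of independence.

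For the reverse direction, the key is to exhibit a single cyclic flat that ``detects'' any failure of independence. My candidate is $Z:=\cl(\cyc(V))$, which lies in $\cZ(\cM)$ by \cref{L-FlatCyclic}. By \cref{L-VclcycV} we have $V\cap Z=\cyc(V)$, so the hypothesis applied to this particular~$Z$ gives
\[
    \dim\cyc(V)=\dim(V\cap Z)\leq\rho(Z)=\rho(\cl(\cyc(V)))=\rho(\cyc(V)),
\]
where the last equality is \eqref{e-clrho}. Combined with (R1), this forces $\dim\cyc(V)=\rho(\cyc(V))$, i.e., $\cyc(V)$ is independent. Now invoking \cref{P-VcycVFormula}(a),
\[
    \dim V-\rho(V)=\dim\cyc(V)-\rho(\cyc(V))=0,
\]
so $V$ is independent, as desired.

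For the concluding ``thus'' sentence, once the collection of independent spaces of~$\cM$ is characterized purely in terms of $\cZ(\cM)$ and the rank values $\rho(Z)$ for $Z\in\cZ(\cM)$, the rank function on all of $\cL(E)$ is recovered from~$\cI(\cM)$ via the standard formula~\eqref{e-rhoVI}, and this determines $\cM$ completely. The main obstacle I anticipate is simply locating the right cyclic flat: it would be tempting to try $\cyc(V)$ itself or $\cl(V)$, but neither need be a cyclic flat in general (cf.~\cref{E-UniMatr}); the composition $\cl\circ\cyc$ is exactly what \cref{L-FlatCyclic} guarantees to lie in $\cZ(\cM)$, and \cref{L-VclcycV} is precisely the statement needed to translate the inequality on $\dim(V\cap Z)$ into an inequality on $\dim\cyc(V)$ that can be fed into \cref{P-VcycVFormula}(a).
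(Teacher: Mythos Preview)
Your proof is correct and follows essentially the same route as the paper: both use the cyclic flat $Z=\cl(\cyc(V))$, the identity $V\cap Z=\cyc(V)$ from \cref{L-VclcycV}, the relation $\rho(Z)=\rho(\cyc(V))$ from \eqref{e-clrho}, and \cref{P-VcycVFormula}(a). The only cosmetic difference is that the paper argues the reverse implication by contrapositive (starting from a dependent~$V$ and exhibiting the violating inequality) while you argue it directly.
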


\begin{proof}
``$\Rightarrow$'' If~$V$ is independent, then so is $V\cap Z$ and thus $\dim(V\cap Z)=\rho(V\cap Z)\leq\rho(Z)$ for any subspace~$Z$.
\\
``$\Leftarrow$'' Let~$V$ be dependent and $Z=\cl(\cyc(V))$, which is in $\cZ(\cM)$.
Then $V\cap Z=\cyc(V)$ thanks to \cref{L-VclcycV} and $\rho(Z)=\rho(\cyc(V))$ by \eqref{e-clrho}.
Now \cref{P-VcycVFormula}(a) implies  $\dim(V\cap Z)=\dim\cyc(V)=\rho(Z)+\dim V-\rho(V)>\rho(Z)$, where the last step follows from
the dependence of~$V$.
This establishes the equivalence.
The last statement follows from the well-known fact that the independent spaces fully determine the \qM{}; see \eqref{e-rhoVI}.
\end{proof}

The previous characterization of independent spaces lets us determine the rank function of the entire \qM{}  from the cyclic
flats together with their rank values; see also \cite[Prop.~3]{FHHW18} for classical matroids.

\begin{cor}\label{C-RhoVFormula}
Let $\cM=(E,\rho)$ be a \qM{} and $\cZ=\cZ(\cM)$ be its collection of cyclic flats.
Then
\[
    \rho(V)=\min_{Z\in\cZ}\Big(\rho(Z)+\dim(V+Z)/Z\Big)\ \text{ for all }\ V\in\cL(E).
\]
\end{cor}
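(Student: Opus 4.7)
The plan is to prove the formula by a matching-bounds argument: every $Z \in \cZ$ yields an upper bound on $\rho(V)$, and equality is attained at the particular choice $Z = \cl(\cyc(V))$.

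For the upper bound I would, for an arbitrary $Z \in \cZ$, combine monotonicity (R2) with the dimension bound (R1) applied to the contraction $\cM/Z$ of \cref{D-RestrContr}. Its rank function satisfies $\tilde\rho((V+Z)/Z) = \rho(V+Z) - \rho(Z)$, and (R1) forces $\tilde\rho((V+Z)/Z) \leq \dim((V+Z)/Z)$. Combined with $\rho(V) \leq \rho(V+Z)$ from monotonicity, this yields
\[
\rho(V) \leq \rho(Z) + \dim((V+Z)/Z)
\]
for every $Z \in \cZ$ (in fact for every $Z \in \cL(E)$, though only the restricted statement is needed).

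To match this bound I would take $Z := \cl(\cyc(V))$. By \cref{L-FlatCyclic}, $\cyc(V)$ is open and its closure is a cyclic flat, placing $Z$ in $\cZ(\cM)$. Now \cref{L-VclcycV} gives $V \cap Z = \cyc(V)$, and \eqref{e-clrho} gives $\rho(Z) = \rho(\cl(\cyc(V))) = \rho(\cyc(V))$. A short computation using $\dim((V+Z)/Z) = \dim V - \dim(V \cap Z) = \dim V - \dim \cyc(V)$ together with \cref{P-VcycVFormula}(a) then yields
\[
\rho(Z) + \dim((V+Z)/Z) = \rho(\cyc(V)) + \dim V - \dim \cyc(V) = \rho(V),
\]
so the minimum is attained.

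The genuine content of the argument lies in locating the witness $Z$ that attains equality. Once one observes that the defect $\dim V - \rho(V)$ equals $\dim \cyc(V) - \rho(\cyc(V))$ by \cref{P-VcycVFormula}(a), the canonical choice $Z = \cl(\cyc(V))$ becomes inevitable, and all that remains is bookkeeping with \cref{L-VclcycV} and \eqref{e-clrho}. No delicate submodularity manipulation is required beyond what is already packaged inside the contraction $\cM/Z$.
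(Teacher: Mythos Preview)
Your argument is correct. The attainment half---choosing $Z=\cl(\cyc(V))$ and unwinding via \cref{L-VclcycV}, \eqref{e-clrho}, and \cref{P-VcycVFormula}(a)---is exactly what the paper does. The difference lies in the upper-bound direction: the paper routes through \cref{T-ZDeterminesM}, rewriting the independence criterion $\dim(I\cap Z)\leq\rho(Z)$ as $\dim I\leq\rho(Z)+\dim((I+Z)/Z)$ and then invoking \eqref{e-rhoVI} to bound $\rho(V)$ by the minimum. Your route is more direct: the inequality $\rho(V)\leq\rho(V+Z)\leq\rho(Z)+\dim((V+Z)/Z)$ is just (R2) plus (R1) for the contraction (equivalently, submodularity plus (R1)), and holds for \emph{every} $Z\in\cL(E)$, not only cyclic flats. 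This bypasses \cref{T-ZDeterminesM} entirely and is arguably cleaner; the paper's detour has the side benefit of emphasizing that the independence characterization already encodes the rank formula. One small attribution quibble: the openness of $\cyc(V)$ comes from \cref{T-CycCoreOpen}, while \cref{L-FlatCyclic} (specifically its consequence) is what places $\cl(\cyc(V))$ in $\cZ(\cM)$.
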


\begin{proof}
\cref{T-ZDeterminesM} tells us that a space~$I$ is independent if and only if $\rho(Z)\geq \dim(I\cap Z)$ for all $Z\in\cZ$.
With the aid of the dimension formula for subspaces we may rewrite the inequality as $\dim I\leq \rho(Z)+\dim(I+Z)/Z$.
Now let $V\in\cL(E)$.
Using \eqref{e-rhoVI} we obtain
\begin{align*}
    \rho(V)&=\max\{\dim I\mid I\leq V,\ I\text{ independent}\}\\
       &=\max\{\dim I\mid I\leq V,\,\dim I\leq \rho(Z)+\dim(I+Z)/Z\text{ for all }Z\in\cZ\}\\
         &\leq \max\{\dim I\mid I\leq V,\,\dim I\leq \rho(Z)+\dim(V+Z)/Z\text{ for all }Z\in\cZ\}\\
         &\leq \min_{Z\in\cZ}\Big(\rho(Z)+\dim(V+Z)/Z\Big).
\end{align*}
For the converse consider $\hat{Z}=\cl(\cyc(V))$, which is in~$\cZ$.
With the aid of \eqref{e-clrho}, \cref{P-VcycVFormula}(a) and \cref{L-VclcycV} we arrive at
\begin{align*}
    \min_{Z\in\cZ}\Big(\rho(Z)+\dim(V+Z)/Z\Big)&\leq\rho(\hat{Z})+\dim(V+\hat{Z})/\hat{Z}\\
     &=\rho(\hat{Z})+\dim V-\dim(V\cap\cl(\cyc(V))\\
      &=\rho(\cyc(V))+\dim V-\dim\cyc(V)\\
      &=\rho(V).\qedhere
\end{align*}
\end{proof}

The above result does not imply that the lattice structure of $(\cZ(\cM),\leq,\wedge,\vee)$
together with the rank values of the cyclic flats are sufficient to determine the \qM{} up to equivalence.
This will be illustrated in \cref{E-Dim4}.

The collection of cyclic flats is often astoundingly small.
The following example is inspired by \cite[Sec.~3]{AlBy22}.

\begin{exa}\label{E-CycFlatsStart}
Let $\F=\F_2$ and in $\F^8$ consider the collection $\cZ=\{Z_0,\ldots,Z_4\}$, where
\[
  Z_0=0,\ Z_1=\subspace{e_1,e_2},\ Z_2=\subspace{e_1,e_2,e_3,e_4},\ Z_3=\subspace{e_5,e_6,e_7,e_8},\ Z_4=\F^8.
\]
Set $\hat{\rho}(Z_i)=i$ for $i=0,\ldots,4$.
Using a computer algebra system one verifies that the map
\[
   \rho:\cL(\F^8)\longrightarrow\N_0,\quad \rho(V)=\min_{Z\in\cZ}\Big(\hat{\rho}(Z)+\dim(V+Z)/Z\Big),
\]
satisfies (R1)--(R3) from \cref{D-qMatroid}  (see also \cref{C-RhoVFormula}).
Moreover, $\rho(Z)=\hat{\rho}(Z)$ for all $Z\in\cZ$ and the \qM{} $\cM=(\F^8,\rho)$ satisfies $\cZ(\cM)=\cZ$.
This also follows from \cite[Prop.~3.7 and Thm.~3.11]{AlBy22}.
We have the following cardinalities of flats, cyclic spaces etc.
\[
   \begin{array}{|c|c|c|c|c|c|c|}
   \hline
        \text{flats}&\text{cyclic spaces}&\text{cyclic flats}&\text{ind.\ spaces}&\text{dep.\ spaces}&\text{circuits}&\text{bases}\\ \hline\hline
        \rule[-.15cm]{0cm}{0.63cm}   99597 &105097  &5 &307905& 109294 &94079&199775\\ \hline
   \end{array}
\]
Finally, notice that the lattice $\cZ$ has the form
\[
\begin{array}{l}
   \begin{xy}
   (10,30)*+{Z_4}="c4";
   (20,10)*+{Z_1}="c1";%
   (20,20)*+{Z_2}="c2";%
    (0,20)*+{Z_3}="c3";%
   (10,0)*+{Z_0}="c0";%
    {\ar@{-}"c0";"c1"};
    {\ar@{-}"c0";"c3"};
    {\ar@{-}"c1";"c2"};
    {\ar@{-}"c2";"c4"};
    {\ar@{-}"c3";"c4"};
   \end{xy}
\end{array}
\]
and this is clearly not semi-modular (see \cite[Prop.~3.3.2]{Sta97}).
Interestingly enough, $\cM$ is representable.
A carefully crafted random search leads to the matrix
\[
    G=\begin{pmatrix}
    1 & \omega^{26772} & 0 & \omega^{43180} & 0 & 0 & \omega^{46265} & \omega^{31452} \\
0 & 0 & 1 & \omega^{3844} & 0 & 0 & \omega^{8371} & \omega^{59093} \\
0 & 0 & 0 & 0 & 1 & 0 &\omega^{45712} &\omega^{50716} \\
0 & 0 & 0 & 0 & 0 & 1 & \omega^{12688} & \omega^{10916}
    \end{pmatrix}\in\F_{2^{16}}^{4\times8}
\]
where $\omega$ is a primitive element satisfying $\omega^{16} + \omega^5 + \omega^3 + \omega^2 + 1=0$.
It is not clear to us whether~$\cM$ is representable over a smaller field.
\end{exa}

We close this section with the most extreme case.
Every \qM{} has at least one cyclic flat, namely $\cl(0)$.
Let us consider the case where this is the sole cyclic flat.
Such \qM{}s do indeed exist, for instance, the trivial and the free \qM{} on~$E$ (see \cref{E-UniMatr}) or the \qM{} $\cM=\cM_G$, where
\[
    G=\begin{pmatrix}1&0&0\\0&1&0\end{pmatrix}\in\F_2^{2\times 3},
\]
which has the single cyclic flat $\subspace{e_3}$ (where $e_i$ denotes the $i$th standard basis vector).

\begin{prop}\label{P-ExtremeExa}
Let $\cM=(E,\rho)$ be a \qM{} with a single cyclic flat, say $\cZ(\cM)=\{\hat{Z}\}$. Thus $\hat{Z}=\cl(0)=\cyc(E)$ and $\rho(\hat{Z})=0$.
Then for any $V\in\cL(E)$
\[
   V\in\cF(\cM)\Longleftrightarrow \hat{Z}\leq V\quad \text{ and }\quad V\in\cO(\cM)\Longleftrightarrow V\leq\hat{Z}.
\]
In particular, $\cl(0)=E\Longleftrightarrow\cM=\cU_0(E)\ \text{ and }\ \cyc(E)=0\Longleftrightarrow \cM=\cU_{\dim E}(E)$.
\end{prop}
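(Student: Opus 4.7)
The plan is to first identify $\hat{Z}$ concretely, then use Lemma~\ref{L-FlatCyclic} and Corollary~\ref{C-RhoVFormula} to obtain the two characterizations, and finally deduce the extremal equivalences as immediate consequences.

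First I would verify the preliminary claim that $\hat{Z}=\cl(0)=\cyc(E)$ with $\rho(\hat{Z})=0$. The space $\cl(0)$ is a flat, and by \eqref{e-clrho} we have $\rho(\cl(0))=\rho(0)=0$; monotonicity then forces every subspace of $\cl(0)$ to have rank $0$, so $\cl(0)$ is cyclic by \eqref{e-cyclicopen} and hence $\cl(0)\in\cZ(\cM)=\{\hat{Z}\}$. Similarly, $\cyc(E)$ is cyclic by idempotency (\cref{P-CycProp}(b)), and by \cref{L-FlatCyclic}(b) its closure $\cl(\cyc(E))$ is another cyclic flat, so $\cl(\cyc(E))=\hat{Z}$. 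Since $\hat{Z}$ is itself cyclic, \cref{P-CycProp}(a) gives $\hat{Z}=\cyc(\hat{Z})\leq\cyc(E)\leq\cl(\cyc(E))=\hat{Z}$, pinning down $\cyc(E)=\hat{Z}$. Rank zero follows at once.

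For the flats equivalence, if $V\in\cF(\cM)$, then by \cref{L-FlatCyclic}(a) the space $\cyc(V)$ is a flat, and it is cyclic by \cref{P-CycProp}(b), so $\cyc(V)\in\cZ(\cM)=\{\hat{Z}\}$, yielding $\hat{Z}=\cyc(V)\leq V$. Conversely, suppose $\hat{Z}\leq V$. Here I invoke \cref{C-RhoVFormula} with the single cyclic flat $\hat{Z}$ of rank $0$, which collapses to $\rho(U)=\dim(U+\hat{Z})/\hat{Z}$ for every $U\in\cL(E)$. Under the assumption $\hat{Z}\leq V$, this simplifies to $\rho(V)=\dim V-\dim\hat{Z}$, and for any $x\in E\setminus V$ we still have $\hat{Z}\leq V+\langle x\rangle$, so $\rho(V+\langle x\rangle)=\dim V+1-\dim\hat{Z}=\rho(V)+1$; hence $V$ is a flat.

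For the cyclic (open) equivalence, if $V\in\cO(\cM)$, then \cref{L-FlatCyclic}(b) gives $\cl(V)\in\cZ(\cM)$, forcing $\cl(V)=\hat{Z}$ and thus $V\leq\cl(V)=\hat{Z}$. For the converse, if $V\leq\hat{Z}$, then monotonicity gives $\rho(V)\leq\rho(\hat{Z})=0$, and likewise $\rho(W)=0=\rho(V)$ for every $W\in\Hyp(V)$, so $V$ is cyclic by \eqref{e-cyclicopen}.

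Finally, the two extremal statements fall out directly. If $\cl(0)=E$, then $\hat{Z}=E$ and the rank formula collapses to $\rho\equiv 0$, so $\cM=\cU_0(E)$; the reverse direction is \cref{E-UniMatr}. If $\cyc(E)=0$, then $\hat{Z}=0$ and the same formula yields $\rho(V)=\dim V$, giving $\cM=\cU_{\dim E}(E)$; again the converse is \cref{E-UniMatr}. The only mildly delicate point in the whole argument is the initial identification $\hat{Z}=\cl(0)=\cyc(E)$; once this is secured, the rest is a short unwinding of \cref{L-FlatCyclic} and \cref{C-RhoVFormula}.
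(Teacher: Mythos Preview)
Your proof is correct. The overall architecture matches the paper's: both arguments use \cref{L-FlatCyclic} (combined with idempotency of $\cyc$ and the fact that $\cl(V)$ is always a flat) to deduce that $\cyc(V)$, respectively $\cl(V)$, must equal $\hat{Z}$ when $V$ is a flat, respectively cyclic; and both dispatch the implication $V\leq\hat{Z}\Rightarrow V\in\cO(\cM)$ via $\rho(V)=0$.

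The genuine difference is in the converse $\hat{Z}\leq V\Rightarrow V\in\cF(\cM)$. The paper works structurally: it invokes \cref{L-VclcycV} to pin down $\cyc(V)=\hat{Z}$, writes $V=\hat{Z}\oplus V_1$, and applies \cref{P-VcycVFormula}(b) to obtain $\rho(V)=\dim V_1$, then repeats this for $V+\langle x\rangle$. You instead go straight through \cref{C-RhoVFormula}, which with a single cyclic flat of rank~$0$ collapses to the explicit formula $\rho(U)=\dim(U+\hat{Z})/\hat{Z}$ for all~$U$. This is shorter and has the added benefit that the two extremal equivalences ($\hat{Z}=E$ and $\hat{Z}=0$) drop out of the same formula with no further work, whereas the paper simply declares them ``clear.'' The paper's route, on the other hand, avoids appealing to the global rank formula and stays closer to the local cyclic-core machinery, which is a bit more self-contained given the flow of \cref{S-CycFlats}.
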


\begin{proof}
Note first that for any $V\in\cL(E)$ we have
\begin{equation}\label{e-SingleZ}
   \cl(\cyc(V))=\cyc(\cl(V))=\hat{Z}.
\end{equation}
Thus if~$V$ is a flat, then $\hat{Z}=\cyc(V)\leq V$. Similarly, if~$V$ is cyclic, then $V\leq\cl(V)=\hat{Z}$.
It remains to consider the opposite implications.
\\
1) If $V\leq\hat{Z}$, then $\rho(V)=0$ and~$V$ is clearly cyclic.
\\
2) Let now $\hat{Z}\leq V$. Then \eqref{e-SingleZ} along with \cref{L-VclcycV} implies $\cyc(V)=\hat{Z}$. Writing $V=V_1\oplus\hat{Z}$, we obtain from
\cref{P-VcycVFormula}(b)
\[
  \rho(V)=\rho(\hat{Z})+\dim V_1=\dim V_1.
\]
Let now $x\in E\setminus V$. Then $\hat{Z}\leq V+\subspace{x}$ and we may write $V+\subspace{x}=(V_1+\subspace{x_1})\oplus\hat{Z}$ for some $x_1\not\in V_1$.
But then the same reasoning as for~$V$ provides us with $\rho(V+\subspace{x})=\dim (V_1+\subspace{x_1})$, and hence $\rho(V+\subspace{x})=\dim V_1+1=\rho(V)+1$.
Thus $V$ is a flat.
\\
The last two equivalences are clear.
\end{proof}

Later in \cref{C-SingleCF} we will see that a \qM{} with a single cyclic flat is the direct sum of a trivial and a free \qM.

\section{The Direct Sum of $q$-Matroids}\label{S-DirSum}
In this section we turn to the direct sum of \qM{}s.
The first definition of it has been given in~\cite{CeJu21}.
We will present a different, more concise, definition, which results in the same construction.
It will enable us to study its properties in more detail.
In particular, we will derive various ways to compute the rank function, the most efficient one being based on the cyclic flats of the components.
Moreover, we will show that the dual of a direct sum is the direct sum of the dual \qM{}s.

We start with the union of two \qM{}s, which is the $q$-analogue of the matroid union; see e.g., \cite[Thm.~11.3.1]{Ox11}.

\begin{theo}[\mbox{\cite[Thm.~28]{CeJu21}}]\label{T-Union}
Let $\cM_i=(E,\rho_i),\,i=1,2,$ be \qM{}s on the same ground space~$E$. For $V\in\cL(E)$ define
\begin{equation}\label{e-rhoUnion}
    \rho(V)=\dim V+ \min\{\rho_1(X)+\rho_2(X)-\dim X\mid X\leq V\}
\end{equation}
Then $\cM=(E,\rho)$ is a $q$-matroid, called the \emph{union} of~$\cM_1$ and $\cM_2$, and denoted by $\cM=\cM_1\vee\cM_2$.
\end{theo}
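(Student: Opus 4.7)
The plan is to reformulate the definition so that its structure becomes transparent. Let $\sigma : \cL(E) \to \N_{\geq 0}$ be the sum $\sigma = \rho_1 + \rho_2$. Because each $\rho_i$ is nonnegative, monotone, submodular, and satisfies $\rho_i(0) = 0$, the same holds for $\sigma$ — in particular, submodularity of $\sigma$ follows by adding the two submodularity inequalities for $\rho_1$ and $\rho_2$. Formula \eqref{e-rhoUnion} can then be rewritten as
\[
  \rho(V) = \min\{\sigma(X) + \dim V - \dim X \mid X \leq V\},
\]
which is the $q$-analogue of the well-known construction that extracts a matroid rank function from a normalized polymatroid. The task is therefore to verify axioms (R1)--(R3) for $\rho$.

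Axioms (R1) and (R2) are straightforward. Testing $X = 0$ in the minimum gives $\rho(V) \leq \dim V$ since $\sigma(0) = 0$, while the lower bound $\rho(V) \geq 0$ follows from $\sigma(X) \geq 0$ and $\dim X \leq \dim V$ for every $X \leq V$. For monotonicity, given $V \leq W$, I pick $X^* \leq W$ attaining the minimum for $\rho(W)$ and compare against the admissible test subspace $Y = X^* \cap V \leq V$ in the formula for $\rho(V)$. Using the dimension identity $\dim X^* - \dim Y = \dim(X^* + V) - \dim V$, the required inequality $\rho(V) \leq \rho(W)$ reduces to
\[
  \sigma(X^*) - \sigma(Y) \geq \dim(X^* + V) - \dim W,
\]
whose left-hand side is nonnegative by monotonicity of $\sigma$ and whose right-hand side is nonpositive because $X^* + V \leq W$.

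The heart of the argument is (R3). I choose $X_V \leq V$ and $X_W \leq W$ attaining the minima for $\rho(V)$ and $\rho(W)$, respectively, and test the admissible subspaces $X_V + X_W \leq V + W$ and $X_V \cap X_W \leq V \cap W$ in the formulas for $\rho(V + W)$ and $\rho(V \cap W)$. Summing the two upper bounds and applying the dimension formula to both $V, W$ and $X_V, X_W$, the dimensional contributions collapse to $\dim V + \dim W - \dim X_V - \dim X_W$, so that
\[
  \rho(V + W) + \rho(V \cap W) \leq \sigma(X_V + X_W) + \sigma(X_V \cap X_W) + \dim V + \dim W - \dim X_V - \dim X_W.
\]
Submodularity of $\sigma$ then bounds $\sigma(X_V + X_W) + \sigma(X_V \cap X_W)$ from above by $\sigma(X_V) + \sigma(X_W)$, and the right-hand side becomes precisely $\rho(V) + \rho(W)$.

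The main obstacle is identifying the correct test subspaces in (R3); once $X_V, X_W$ are chosen as optimizers and $X_V + X_W$, $X_V \cap X_W$ are plugged into the definitions of $\rho(V + W)$ and $\rho(V \cap W)$, the rest is a direct bookkeeping exercise. No deeper features of $q$-matroids beyond nonnegativity, monotonicity, submodularity, and the dimension formula on the subspace lattice are needed — the argument is essentially the classical matroid-union proof transported to $\cL(E)$.
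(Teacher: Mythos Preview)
Your proof is correct and follows essentially the same approach as the paper. The paper's proof merely sketches the argument, observing that $\rho_1+\rho_2$ satisfies (R2) and (R3), hence defines a $q$-polymatroid, and that the minimization in \eqref{e-rhoUnion} is the standard procedure turning a polymatroid into a matroid (citing \cite[Thm.~29]{CeJu21}, \cite[Thm.~3.6]{GLJ22Ind}, and \cite[Prop.~11.1.7]{Ox11}); you have simply written out the details of that procedure.
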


\begin{proof}
The fact that $\rho$ is a rank function is in \cite[Thm.~29]{CeJu21} and also follows
from \cite[Thm.~3.6 and its proof]{GLJ22Ind}.
The main argument in these proofs is that the map $\rho_1+\rho_2:\cL(E)\longrightarrow\N_0$ satisfies (R2) and (R3) from \cref{D-qMatroid}.
Hence it gives rise to a $q$-polymatroid.
The minimization in \eqref{e-rhoUnion} turns this map into a rank function of a \qM{}.
It mimics the analogous procedure that associates a matroid to a given polymatroid; see for instance \cite[Prop.~11.1.7]{Ox11}.
\end{proof}

In order to define the direct sum of two \qM{}s $\cM_i=(E_i,\rho_i)$, we need, unsurprisingly, the direct sum $E=E_1\oplus E_2$ of $\F$-vector spaces~$E_1$ and~$E_2$.
For any such direct sum we denote by
\[
      \pi_i:E\longrightarrow E_i\ \text{  and }\ \iota_i:E_i\longrightarrow E
\]
the corresponding projection and embedding.
We will identify a subspace $V_i\in\cL(E_i)$ with its image $\iota_i(V_i)$.

The construction of the direct sum consists of two steps:
\\[.5ex]
(1) Add the ground space $E_1$ of the \qM{} $\cM_1=(E_1,\rho_1)$  as a space of rank 0 to the \qM{} $\cM_2=(E_2,\rho_2)$ and vice versa.
This results in two \qM{}s $\cM'_1,\,\cM'_2$ on the common ground space $E_1\oplus E_2$ and with rank functions~$\rho'_i$.
\\[.5ex]
(2) Take the union of~$\cM'_1$ and $\cM'_2$.

\begin{theo}[\mbox{\cite[Sec.~7]{CeJu21}}]\label{T-DirSum}
Let $\cM_i=(E_i,\rho_i),\,i=1,2,$ be \qM{}s and set $E=E_1\oplus E_2$.
Define $\rho'_i:\cL(E)\longrightarrow \N_0,\ V\longmapsto \rho_i(\pi_i(V))$ for $i=1,2$.
Then $\cM'_i=(E,\rho'_i)$ is a \qM{} for $i=1,2$.
Set $\cM=\cM'_1\vee\cM'_2$, that is, $\cM=(E,\rho)$, where
\begin{equation}\label{e-rho}
  \rho(V)= \dim V+\min_{X\leq V}\big(\rho'_1(X)+\rho'_2(X)-\dim X\big)
  \ \text{ for } V\in\cL(E).
\end{equation}
Then $\cM$ is a $q$-matroid, called the \emph{direct sum of $\cM_1$ and~$\cM_2$} and denoted by $\cM_1\oplus\cM_2$.
\end{theo}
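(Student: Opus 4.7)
The plan is to carry out the theorem in the two steps indicated by the construction just before the statement. First I would verify that each auxiliary rank function $\rho_i'$ gives a $q$-matroid on the common ground space $E$, i.e.\ satisfies axioms (R1)--(R3) of \cref{D-qMatroid}. Once that is in place, the conclusion that $\cM = \cM_1' \vee \cM_2'$ is itself a $q$-matroid is immediate from \cref{T-Union}, and the defining formula \eqref{e-rho} is just the union formula \eqref{e-rhoUnion} applied to $\rho_1'$ and $\rho_2'$.

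For the first step, (R1) is straightforward: since $\pi_i$ is linear, $\dim \pi_i(V) \leq \dim V$, and then (R1) for $\rho_i$ gives $0 \leq \rho_i'(V) \leq \dim V$. Monotonicity (R2) is equally easy: $V \leq W$ implies $\pi_i(V) \leq \pi_i(W)$, so monotonicity of $\rho_i$ applies directly.

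The only item requiring a small amount of care is submodularity (R3). The subtlety is that while $\pi_i$ commutes with sums, $\pi_i(V+W) = \pi_i(V) + \pi_i(W)$, it only satisfies the one-sided inclusion $\pi_i(V \cap W) \leq \pi_i(V) \cap \pi_i(W)$, which can be strict. I would absorb this by applying monotonicity of $\rho_i$ to the intersection term before invoking submodularity of $\rho_i$:
\[
\rho_i'(V+W) + \rho_i'(V \cap W) \leq \rho_i\bigl(\pi_i(V)+\pi_i(W)\bigr) + \rho_i\bigl(\pi_i(V)\cap\pi_i(W)\bigr) \leq \rho_i'(V) + \rho_i'(W).
\]

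With both $\cM_1'$ and $\cM_2'$ established as $q$-matroids on the common ground space $E$, \cref{T-Union} applies verbatim and produces the $q$-matroid $\cM = \cM_1' \vee \cM_2'$ whose rank function is exactly \eqref{e-rho}. There is no genuine obstacle here — the only conceptual point is the one-sided behaviour of $\pi_i$ on intersections, which is neutralised by monotonicity. All deeper properties of the direct sum (behaviour on cyclic flats, rank formulas, duality) are postponed to the subsequent sections.
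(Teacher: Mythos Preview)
Your proposal is correct and takes essentially the same approach as the paper: the paper's proof simply cites \cite[Thm.~5.2]{GLJ21C} for the fact that each $\cM_i'$ is a \qM{} and then invokes \cref{T-Union} for the rest, which is exactly your two-step plan with the (R1)--(R3) verification spelled out rather than cited. Your handling of (R3) via $\pi_i(V\cap W)\leq \pi_i(V)\cap\pi_i(W)$ together with monotonicity is the standard (and correct) argument.
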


\begin{proof}
A short proof showing that~$\cM'_i$ is a \qM{} can be found in \cite[Thm.~5.2]{GLJ21C} .
The rest follows from \cref{T-Union}.
\end{proof}

We clearly have $\rho'_i(E_j)=0$ for $i\neq j$.
Even more, one can easily verify that $\cM'_i\approx\cM_i\oplus\cU_0(E_j)$, where $\cU_0(E_j)$ is the trivial $q$-matroid on the ground
space $E_j$. Thus~$\cM'_i$ is a special instance of the direct sum (called ``adding a loop space'' in \cite{CeJu21}).
Some additional basic and to-be-expected properties of the direct sum will be presented below in \cref{T-DirSumProp} after deriving
more convenient expressions for the rank function.

Before doing so, we mention the following result, which  implies that $\cM_1\oplus\cM_2$ is the $q$-matroid on $E=E_1\oplus E_2$ with the
``least amount of conditions'' among all $q$-matroids on~$E$ whose restriction to~$E_i$ is isomorphic to~$\cM_i$ for $i=1,2$.

\begin{theo}[\mbox{\cite[Thm.~5.5]{GLJ21C}}]\label{T-DirSumCoPr}
Let $\cM_i=(E_i,\rho_i),\,i=1,2,$ be \qM{}s and $\cM=\cM_1\oplus\cM_2=(E,\rho)$ be the direct sum.
Then $(\cM,\iota_1,\iota_2)$ is a coproduct of~$\cM_1$ and~$\cM_2$ in the category of $q$-matroids with linear weak maps as morphisms.
In other words, if $\cN=(E,\tau)$ is any $q$-matroid satisfying $\tau(V_i)\leq \rho_i(V_i)$ for all $V\in\cL(E_i)$ and $i=1,2$, then
$\rho(V)\geq\tau(V)$ for all $V\in\cL(E)$.
As a consequence, the independent spaces satisfy $\cI(\cN)\subseteq\cI(\cM)$.
\end{theo}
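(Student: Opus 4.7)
The plan is to establish the rank inequality $\rho(V) \geq \tau(V)$ directly from formula~\eqref{e-rho}: it suffices to show that for every $X \leq V$,
\[
\tau(V) \leq \dim V + \rho_1(\pi_1(X)) + \rho_2(\pi_2(X)) - \dim X,
\]
after which taking the minimum over $X \leq V$ on the right-hand side yields $\tau(V) \leq \rho(V)$. I will obtain this by combining two elementary estimates.

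The first estimate is the general bound $\tau(V) - \tau(X) \leq \dim V - \dim X$ for $X \leq V$, which follows by iteration from the unit-increment observation $\tau(W + \langle w\rangle) \in \{\tau(W), \tau(W)+1\}$ recorded right after \cref{D-qMatroid}.

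The second estimate is $\tau(X) \leq \rho_1(\pi_1(X)) + \rho_2(\pi_2(X))$ for every $X \in \cL(E)$. Every $x \in X$ decomposes as $x = \iota_1(\pi_1(x)) + \iota_2(\pi_2(x))$, so $X \leq \iota_1(\pi_1(X)) + \iota_2(\pi_2(X))$. Under the standing identification $V_i \sim \iota_i(V_i)$, the subspaces $\pi_1(X), \pi_2(X) \leq E$ meet only in $E_1 \cap E_2 = 0$, and hence monotonicity and submodularity of $\tau$ give
\[
\tau(X) \leq \tau(\pi_1(X) + \pi_2(X)) \leq \tau(\pi_1(X)) + \tau(\pi_2(X)).
\]
The hypothesis $\tau(V_i) \leq \rho_i(V_i)$ for $V_i \in \cL(E_i)$ now delivers the claimed bound.

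Adding the two estimates yields the required inequality. The consequence is then immediate: for $I \in \cI(\cN)$ one has $\dim I = \tau(I) \leq \rho(I) \leq \dim I$, forcing $\rho(I) = \dim I$ and hence $I \in \cI(\cM)$. The only step with real content is the submodularity computation, whose validity rests entirely on the elementary fact that $E_1 \cap E_2 = 0$; I do not anticipate any substantial obstacle.
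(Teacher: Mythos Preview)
Your argument for the rank inequality is correct. The two estimates are exactly what is needed: the first, $\tau(V)-\tau(X)\leq\dim V-\dim X$ for $X\leq V$, is immediate from the unit-increment property (or directly from submodularity applied to~$X$ and any complement of~$X$ in~$V$), and the second, $\tau(X)\leq\rho_1(\pi_1(X))+\rho_2(\pi_2(X))$, follows from monotonicity, submodularity with $\pi_1(X)\cap\pi_2(X)=0$, and the hypothesis on~$\tau$. Adding them and minimizing over~$X\leq V$ recovers~\eqref{e-rho} as an upper bound for~$\tau(V)$. The consequence for independent spaces is then immediate, as you note.

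One remark on scope: the theorem as stated also asserts the full categorical coproduct property, which in principle involves arbitrary linear weak maps $\cM_i\to\cN$ into a \qM{} on a possibly different ground space, together with existence and uniqueness of the mediating map. You have proved precisely the ``in other words'' clause, i.e., the special case where~$\cN$ lives on the same ground space~$E$ and the comparison maps are the inclusions~$\iota_i$. That is the part actually used later in the paper (e.g., after \cref{T-CycFlatsDSGG}), so your argument covers what is needed here. The paper itself does not supply a proof but cites \cite[Thm.~5.5]{GLJ21C} for the full statement; your direct verification of the rank inequality is the natural elementary route and is essentially how one would expect the cited proof to proceed for this part.
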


The definition of the rank function of $\cM_1\oplus\cM_2$ in \eqref{e-rho} becomes quickly very cumbersome as it requires computing
the minimum over all subspaces of~$V$.
We now derive various more efficient ways of determining the rank values.
The most convenient one is given in \cref{C-RhoDSZ} below, which only requires the cyclic flats of $\cM_1$ and $\cM_2$.
We start with the following simple improvements.

\begin{prop}\label{P-XList}
Consider the situation of \cref{T-DirSum}.
Define the sets
\begin{align}
    \cX&=\{X\in\cL(E)\mid \rho_1'(X)+\rho_2'(X)<\dim X\},  \label{e-XSet}\\
    \cT&=\{X_1\oplus X_2\mid X_i\in\cL(E_i)\}, \nonumber\\
    \cT(V)&=\{X_1\oplus X_2\mid X_i\leq\pi_i(V)\}\ \text{ for }\ V\in\cL(E).\nonumber
\end{align}
Then for any $V\in\cL(E)$
\begin{align}
   \rho(V)&=\dim V+\min_{X\in\{0\}\cup(\cX\cap\cL(V))}\big(\rho'_1(X)+\rho'_2(X)-\dim X\big)   \label{e-rhoVX}\\[.6ex]
      &=\dim V+\min_{X\in\cT}\big(\rho_1'(X)+\rho_2'(X)-\dim(X\cap V)\big) \label{e-rhoVT}\\[.6ex]
      &=\dim V+\min_{X\in\cT(V)}\big(\rho_1'(X)+\rho_2'(X)-\dim(X\cap V)\big).\label{e-rhoVTV}
\end{align}
As a consequence, $V$ is independent in $\cM_1\oplus\cM_2$ iff $\cL(V)\cap\cX=\emptyset$.
\end{prop}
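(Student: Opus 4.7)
The plan is to derive \eqref{e-rhoVX}, \eqref{e-rhoVT}, and \eqref{e-rhoVTV} in this order from the original definition \eqref{e-rho}, and then to read off the independence characterization from \eqref{e-rhoVX}.

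For \eqref{e-rhoVX} the argument is essentially trivial: the expression $\rho'_1(X)+\rho'_2(X)-\dim X$ vanishes at $X=0$ and is nonnegative for every $X\notin\cX$. Hence no $X\in\cL(V)\setminus\cX$ can strictly improve upon the value $0$ attained at $X=0$, and the minimization in \eqref{e-rho} may therefore be restricted to $\{0\}\cup(\cX\cap\cL(V))$ without altering its value.

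For \eqref{e-rhoVT} the main device is the map $X\mapsto\widehat{X}:=\pi_1(X)\oplus\pi_2(X)\in\cT$. Two elementary checks drive the argument: (i) since $\pi_i(\widehat{X})=\pi_i(X)$, we have $\rho'_i(\widehat{X})=\rho'_i(X)$; and (ii) $X\leq\widehat{X}$ by construction, so if $X\leq V$ then $X\leq\widehat{X}\cap V$, whence $\dim(\widehat{X}\cap V)\geq\dim X$. These together yield
\[
\rho'_1(\widehat{X})+\rho'_2(\widehat{X})-\dim(\widehat{X}\cap V)\ \leq\ \rho'_1(X)+\rho'_2(X)-\dim X,
\]
which establishes the $\leq$ direction of \eqref{e-rhoVT}. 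For the reverse inequality, given an arbitrary $X\in\cT$, set $Y:=X\cap V\leq V$. Then $\pi_i(Y)\leq\pi_i(X)$, so monotonicity of $\rho_i$ gives $\rho'_i(Y)\leq\rho'_i(X)$, while $\dim Y=\dim(X\cap V)$ holds by definition. Substituting $Y$ in \eqref{e-rho} produces the $\geq$ direction.

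For \eqref{e-rhoVTV}, the inclusion $\cT(V)\subseteq\cT$ gives one inequality for free. The other one follows because the element $\widehat{X}$ produced above for $X\leq V$ already satisfies $\pi_i(\widehat{X})=\pi_i(X)\leq\pi_i(V)$ and hence lies in $\cT(V)$; so the minimum in \eqref{e-rhoVT} is actually attained inside $\cT(V)$. The consequence on independence then drops straight out of \eqref{e-rhoVX}: the summand equals $0$ at $X=0$ and is strictly negative for every $X\in\cX$, so $\rho(V)=\dim V$ if and only if $\cX\cap\cL(V)=\emptyset$. The only delicate point is keeping the direction of the inequalities straight under the two substitutions $X\mapsto\widehat{X}$ and $X\mapsto X\cap V$; once these are in place, everything else unpacks immediately from the definitions of $\rho'_i$, $\cX$, $\cT$, and $\cT(V)$.
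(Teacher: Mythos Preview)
Your proof is correct and uses the same key devices as the paper (the substitutions $X\mapsto X\cap V$ and $X\mapsto\pi_1(X)\oplus\pi_2(X)$), with only a cosmetic reorganization for \eqref{e-rhoVT}: the paper first passes through the intermediate minimum over all $X\leq E$ and then restricts to~$\cT$, while you go directly from $X\leq V$ to $\cT$ in one step. For \eqref{e-rhoVTV} you take a small shortcut---the paper introduces a fresh reduction $X_i\mapsto X_i\cap\pi_i(V)$ to pass from~$\cT$ to~$\cT(V)$, whereas you simply observe that the $\widehat{X}$ already constructed for \eqref{e-rhoVT} lies in~$\cT(V)$; both are correct and yours is marginally cleaner.
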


\begin{proof}
1) Using $X=0$ in \eqref{e-rho} we obtain $\rho(V)\leq \dim V$ (as it has to be).
Hence we only have to test the subspaces~$X$ of~$V$ that may lead to $\rho(V)<\dim V$. This is exactly the collection~$\cX$, and thus
\eqref{e-rhoVX} is established. This identity also implies the statement about the independent spaces.
\\
2)
Let $M_1:=\min_{X\leq V}\big(\rho'_1(X)+\rho'_2(X)-\dim X\big)$ and $M_2:=\min_{X\leq E}\big(\rho'_1(X)+\rho'_2(X)-\dim(X\cap V)\big)$.
Clearly  $M_2\leq M_1$.
For the converse inequality let $\hat{X}\leq E$ be such that
$M_2=\rho'_1(\hat{X})+\rho'_2(\hat{X})-\dim(\hat{X}\cap V)$.
Set $\tilde{X}:=\hat{X}\cap V$.
Using monotonicity of the rank function we obtain
$M_1\leq\rho'_1(\tilde{X})+\rho'_2(\tilde{X})-\dim\tilde{X}\leq \rho'_1(\hat{X})+\rho'_2(\hat{X})-\dim(\hat{X}\cap V)=M_2$.
Thus $M_1=M_2$ and
\begin{equation}\label{e-M1M2}
  \rho(V)=\dim V+\min_{X\leq E}\big(\rho_1'(X)+\rho_2'(X)-\dim(X\cap V)\big).
\end{equation}
Actually, this expression for the rank function of the direct sum appears in \cite[Thm.~25]{CeJu21}; see also \cite[Rem.~26]{CeJu21}.
Now we are ready to prove \eqref{e-rhoVT}.
Recall that $\rho'_i(X)=\rho_i(\pi_i(X))$.  Hence $\rho_1'(X)+\rho_2'(X)$ only depends on the projections $\pi_i(X)$.
Since $X\leq\pi_1(X)\oplus\pi_2(X)=:X'$ for any subspace~$X$, we have
$\rho_1'(X)+\rho_2'(X)-\dim(X\cap V)\geq \rho_1'(X')+\rho_2'(X')-\dim(X'\cap V)$.
This shows that it suffices to take the minimum in \eqref{e-M1M2} over subspaces~$X$ satisfying $X=\pi_1(X)\oplus\pi_2(X)$.
But this is exactly the collection~$\cT$, and hence \eqref{e-rhoVT} is established.
\\
3) Let $X=X_1\oplus X_2\in\cT$. Then $X\cap V\subseteq Y_1\oplus Y_2$, where $Y_i=X_i\cap \pi_i(V)$.
Set $Y:=Y_1\oplus Y_2$. Then $Y\in\cT(V)$ and $X\cap V\subseteq Y\cap V$. Thus
$\rho_1(Y_1)+\rho_2(Y_2)-\dim(Y\cap V)\leq \rho_1(X_1)+\rho_2(X_2)-\dim (X\cap V)$.
This shows that the minimum in \eqref{e-rhoVT} is attained by a subspace in $\cT(V)$ and \eqref{e-rhoVTV} is proven.
\end{proof}

The above allows an immediate characterization of the circuits of the direct sum.
The result can already be found in \cite[Thm.~5.2]{GLJ21C}.

\begin{cor}\label{C-CircuitsDS}
Consider the direct sum $\cM_1\oplus\cM_2$ as in \cref{T-DirSum} and the set~$\cX$ in \eqref{e-XSet}.
Then the circuits of the direct sum are given by
\[
   \cC(\cM_1\oplus\cM_2)=\{X\in\cX\mid X \text{ is inclusion-minimal in }\cX\}.
\]
\end{cor}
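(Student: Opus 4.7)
The plan is to leverage the characterization of independence from \cref{P-XList}, namely that $V \in \cL(E)$ is independent in $\cM_1 \oplus \cM_2$ if and only if $\cL(V) \cap \cX = \emptyset$. Everything else should follow from unwinding the definition of a circuit as an inclusion-minimal dependent space.

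First I would translate dependence into the language of $\cX$: a subspace $V$ is dependent precisely when $\cL(V) \cap \cX \neq \emptyset$, i.e., when $V$ contains some $X \in \cX$. Then, for the forward inclusion, suppose $V$ is a circuit. Since $V$ is dependent, pick $X \in \cX$ with $X \leq V$. Every proper subspace of $V$ is independent, hence contains no element of $\cX$, so in particular it is not itself in $\cX$. This forces $X = V$, giving $V \in \cX$. Moreover, any proper subspace of $V$ is independent and thus cannot lie in $\cX$, so $V$ is inclusion-minimal in $\cX$.

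Conversely, suppose $X \in \cX$ is inclusion-minimal. Then $X$ itself belongs to $\cL(X) \cap \cX$, so $X$ is dependent. For any proper subspace $W \lneq X$, every subspace $W' \leq W$ is also a proper subspace of $X$, hence $W' \notin \cX$ by minimality; therefore $\cL(W) \cap \cX = \emptyset$, meaning $W$ is independent. Thus $X$ is dependent but all its proper subspaces are independent, i.e., $X$ is a circuit.

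There is no real obstacle here — the content is entirely packaged in the equivalence from \cref{P-XList}, and the proof is just a two-way argument checking that ``minimal dependent'' and ``minimal in $\cX$'' say the same thing once dependence is recognized as ``contains some element of $\cX$''. The only place to be slightly careful is observing that inclusion-minimality of $X$ in $\cX$ actually yields independence of every proper subspace $W$ (not merely $W \notin \cX$), which follows because every subspace of $W$ is a proper subspace of $X$.
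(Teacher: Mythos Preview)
Your argument is correct and matches the paper's approach: the paper states this corollary without proof, treating it as an immediate consequence of the independence criterion in \cref{P-XList}, and your write-up simply unpacks that immediacy. There is nothing to add.
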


As to be expected, the direct sum behaves well with respect to restriction to the initial ground spaces~$E_i$ and the according contraction.
Recall Definitions~\ref{D-Equiv} and~\ref{D-RestrContr}.

\begin{theo}[\mbox{\cite[Thm.~47, Cor.~48]{CeJu21}}]\label{T-DirSumProp}
Let $\cM=\cM_1\oplus\cM_2$ be as in \cref{T-DirSum}.
\begin{alphalist}
\item For all $V_i\in\cL(E_i)$ we have $\rho_i(V_i)=\rho'_i(V_i)$ and $\rho'_j(V_i)=0$ for $j\neq i$.
         Furthermore, $\rho(V_1\oplus V_2)=\rho_1(V_1)+\rho_2(V_2)$.
         Hence  $\cM_i\approx \cM|_{E_i}\approx(\cM'_i)|_{E_i}$, and the isomorphism is provided by~$\iota_i$.
         Moreover, for $i\neq j$ the $q$-matroid $(\cM'_i)|_{E_j}$ is the zero $q$-matroid.
\item $\rho(\cM)=\rho_1(\cM_1)+\rho_2(\cM_2)$.
\item $\cM/E_i\approx\cM_j$ for $i\neq j$.
\end{alphalist}
\end{theo}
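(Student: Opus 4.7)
The plan is to handle the three parts in order, leaning throughout on the reformulation of the rank function provided by \cref{P-XList}, in particular the expression~\eqref{e-rhoVTV} with minimization over $\cT(V)$.

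For part~(a), the first two claims are essentially definitional: since we identify $V_i \in \cL(E_i)$ with $\iota_i(V_i) \leq E$, the projection satisfies $\pi_i(V_i) = V_i$ and $\pi_j(V_i) = 0$ for $j \neq i$, so by the definition of $\rho'_k$ we obtain $\rho'_i(V_i) = \rho_i(V_i)$ and $\rho'_j(V_i) = \rho_j(0) = 0$. The final sentence of part~(a), that $(\cM'_i)|_{E_j}$ is the zero $q$-matroid, is then immediate. The genuine content of part~(a) is the identity $\rho(V_1 \oplus V_2) = \rho_1(V_1) + \rho_2(V_2)$. I would set $V = V_1 \oplus V_2$ and apply \eqref{e-rhoVTV}: the minimum runs over $X = X_1 \oplus X_2$ with $X_i \leq \pi_i(V) = V_i$, and every such $X$ is contained in $V$, so $X \cap V = X$. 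The expression therefore splits into independent minimizations
\[
   \rho(V) = \dim V_1 + \dim V_2 + \min_{X_1 \leq V_1}\bigl(\rho_1(X_1) - \dim X_1\bigr) + \min_{X_2 \leq V_2}\bigl(\rho_2(X_2) - \dim X_2\bigr).
\]
The key ingredient here is the monotonicity of the nullity $X \mapsto \dim X - \rho(X)$, which is a direct consequence of the observation $\rho(W + \subspace{x}) \in \{\rho(W), \rho(W)+1\}$ recorded immediately after \cref{D-qMatroid}. Applied to $X_i \leq V_i$, this shows each inner minimum is attained at $X_i = V_i$, and the identity follows. The equivalences $\cM_i \approx \cM|_{E_i} \approx (\cM'_i)|_{E_i}$ via $\iota_i$ then drop out of $\rho(\iota_i(V)) = \rho_i(V)$ and $\rho'_i(\iota_i(V)) = \rho_i(V)$ for $V \in \cL(E_i)$, using \cref{D-RestrContr} and \cref{D-Equiv}.

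Part~(b) is a one-line corollary: set $V_i = E_i$ in the identity from part~(a). For part~(c), I would use the contraction formula directly. Under the natural isomorphism $E/E_1 \xrightarrow{\sim} E_2$ the coset $V/E_1$ corresponds to $\pi_2(V)$, and $V + E_1 = E_1 \oplus \pi_2(V)$. Applying part~(a) to both $E_1 \oplus \pi_2(V)$ and $E_1 = E_1 \oplus 0$ yields
\[
   \tilde{\rho}(V/E_1) = \rho(V + E_1) - \rho(E_1) = \bigl(\rho_1(E_1) + \rho_2(\pi_2(V))\bigr) - \rho_1(E_1) = \rho_2(\pi_2(V)),
\]
which is precisely the rank function of $\cM_2$ transported along the isomorphism; hence $\cM/E_1 \approx \cM_2$, and the case $i=2$ is symmetric. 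The only nontrivial step is the monotonicity-of-nullity observation invoked in part~(a); once it is recognized, everything else reduces to book-keeping on top of \cref{P-XList}.
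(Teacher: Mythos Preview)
Your proof is correct and follows essentially the same route as the paper: both arguments reduce part~(a) to \eqref{e-rhoVTV} and then show that the minimum is attained at $X=V$. The only cosmetic difference is that the paper argues this by writing $V_i=X_i\oplus Z_i$ and invoking (R1) and submodularity, whereas you invoke the monotonicity of nullity directly; these are the same elementary fact packaged slightly differently, and parts~(b) and~(c) are handled identically.
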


\begin{proof}
(a) The first statement follows from $\rho_i'(V_i)=\rho_i(\pi_i(V_i))=\rho_i(V_i)$.
Let now $V=V_1\oplus V_2$.
By \eqref{e-rhoVTV} there exist $X=X_1\oplus X_2$ for some $X_i\leq V_i$ such that
$\rho(V)=\dim V+\rho_1'(X_1)+\rho_2'(X_2)-\dim X$.
Write $V_i=X_i\oplus Z_i$ for some $Z_i\leq V_i$.
Using the properties of the rank functions $\rho_i'$ we compute
\begin{align*}
   \rho(V)&=\rho_1'(X_1)+\rho_2'(X_2)+\dim Z_1+\dim Z_2\geq \rho_1'(X_1)+\rho_2'(X_2)+\rho_1'(Z_1)+\rho_2'(Z_2)\\
       &\geq \rho_1'(X_1\oplus Z_1)+\rho_2'(X_2\oplus Z_2)=\rho_1'(V_1)+\rho_2'(V_2)\\
       &=\rho_1(V_1)+\rho_2(V_2)\geq\rho(V),
\end{align*}
where the last step follows from \eqref{e-rhoVTV} with $X=V$.
This establishes the desired identity.
The isomorphisms follow from $\rho(V_i)=\rho_i(V_i)=\rho'_i(V_i)$, and the rest is clear.
\\
(b) is a consequence of~(a) because $\rho(\cM)=\rho(E)=\rho_1(E_1)+\rho_2(E_2)$.
\\
(c) Without loss of generality let $i=1$.
Denote the rank function of $\cM/E_1$ by $\hat{\rho}_1$.
Let $V\in\cL(E_2)$. Then
$\hat{\rho}_1(V/E_1)=\rho(E_1\oplus V)-\rho(E_1)=\rho_1(E_1)+\rho_2(V)-\rho_1(E_1)=\rho_2(V)$,
where the last identity follows from~(a).
Hence $\cM_2$ and$\cM/E_1$ are equivalent via the isomorphism $\hat{\pi}:E_2\longrightarrow E/E_1$ induced by the
canonical projection from~$E$ onto $E/E_1$.
\end{proof}

Now we obtain a very efficient way of computing the rank function of the direct sum which only requires the collections
of cyclic flats of the two summands. The following notation we be convenient for the rest of this paper.

\begin{nota}\label{N-DSCollections}
Let $E=E_1\oplus E_2$ and $\cY_i$ be collections of subspaces of~$E_i$ for $i=1,2$. We define
$\cY_1\oplus\cY_2=\{Y_1\oplus Y_2\mid Y_i\in\cY_i\}$.
\end{nota}

\begin{theo}\label{T-rhoVFlatsCyc}
Let $\cM_i=(E_i,\rho_i),\,i=1,2,$ be \qM{}s and
let $\cF_i=\cF(\cM_i)$ and $\cZ_i=\cZ(\cM_i)$, that is,~$\cF_i$ (resp.\ $\cZ_i$) are the collections of flats (resp.\ cyclic flats) of~$\cM_i$.
Let $\rho$ be the rank function of $\cM_1\oplus\cM_2$. Then for all $V\in\cL(E_1\oplus E_2)$ we have
\begin{align}
     \rho(V)&=\dim V+\min_{F_1\oplus F_2\in\cF_1\oplus\cF_2}\big(\rho_1(F_1)+\rho_2(F_2)-\dim((F_1\oplus F_2)\cap V)\big)\label{e-rhoVF}\\[.6ex]
       &=\dim V+\min_{Z_1\oplus Z_2\in\cZ_1\oplus\cZ_2}\big(\rho_1(Z_1)+\rho_2(Z_2)-\dim((Z_1\oplus Z_2)\cap V)\big).\label{e-rhoVZ}
\end{align}
\end{theo}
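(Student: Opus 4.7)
The plan is to reduce the minimum in formula \eqref{e-rhoVT} from Proposition~\ref{P-XList} in two successive steps: first to pairs of flats, and then to pairs of cyclic flats. Since any $X=X_1\oplus X_2\in\cT$ satisfies $\rho'_i(X)=\rho_i(\pi_i(X))=\rho_i(X_i)$, Proposition~\ref{P-XList} already rewrites as
\[
 \rho(V)=\dim V+\min_{X_1\oplus X_2\in\cT}\big(\rho_1(X_1)+\rho_2(X_2)-\dim((X_1\oplus X_2)\cap V)\big).
\]
Because $\cZ_1\oplus\cZ_2\subseteq\cF_1\oplus\cF_2\subseteq\cT$, one direction of each claimed equality is immediate. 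For the other direction I would show that every pair in $\cT$ admits a replacement by a pair of flats, and every pair of flats admits a replacement by a pair of cyclic flats, never increasing the bracketed expression.

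For the first reduction, given $(X_1,X_2)\in\cL(E_1)\times\cL(E_2)$, I would take $F_i=\cl(X_i)\in\cF_i$. By \eqref{e-clrho} we have $\rho_i(F_i)=\rho_i(X_i)$, while the containments $X_i\leq F_i$ force $(X_1\oplus X_2)\cap V\leq(F_1\oplus F_2)\cap V$, so the intersection dimension only grows. Hence the expression for $(F_1,F_2)$ is bounded above by that for $(X_1,X_2)$, which establishes \eqref{e-rhoVF}.

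For the second reduction, given flats $F_i\in\cF_i$, I would set $Z_i=\cyc(F_i)$. By Lemma~\ref{L-FlatCyclic}(a) together with Corollary~\ref{P-CycProp}(b) these are cyclic flats. Choose complements $F_i=Z_i\oplus W_i$; then Proposition~\ref{P-VcycVFormula}(b) gives $\rho_i(F_i)=\rho_i(Z_i)+\dim W_i$. The crucial dimension estimate is
\[
 \dim((F_1\oplus F_2)\cap V)-\dim((Z_1\oplus Z_2)\cap V)\leq\dim W_1+\dim W_2,
\]
obtained from the natural injection $((F_1\oplus F_2)\cap V)/((Z_1\oplus Z_2)\cap V)\hookrightarrow(F_1\oplus F_2)/(Z_1\oplus Z_2)\cong W_1\oplus W_2$. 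Adding these ingredients shows the expression for $(Z_1,Z_2)$ is no larger than that for $(F_1,F_2)$, proving \eqref{e-rhoVZ}.

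The main obstacle is the second reduction: one must observe that the rank loss $\dim W_1+\dim W_2$ incurred when passing from flat to cyclic core is exactly absorbed by the drop in intersection dimension, which relies on the decomposition in Proposition~\ref{P-VcycVFormula}(b). The closure step is comparatively routine, using only that $\cl$ preserves rank.
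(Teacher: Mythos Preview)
Your proof is correct and essentially the same as the paper's: both reduce \eqref{e-rhoVT} first to flats and then to cyclic flats, using in each step that the bracketed expression does not increase. The only cosmetic differences are that you pass directly to $\cl(X_i)$ (the paper argues iteratively that a non-flat $X_i$ can be enlarged without changing rank) and that you phrase the key dimension estimate via the injection $((F_1\oplus F_2)\cap V)/((Z_1\oplus Z_2)\cap V)\hookrightarrow (F_1\oplus F_2)/(Z_1\oplus Z_2)$, whereas the paper obtains the equivalent inequality $\dim(Z+V)\leq\dim(\hat{F}+V)$ via the dimension formula.
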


\begin{proof}
1) We use  \eqref{e-rhoVT}.
Let $X=X_1\oplus X_2\in\cT$.
Clearly, $\rho'_i(X)=\rho_i(X_i)$ for $i=1,2$.
Suppose $X_1\not\in\cF_1$. Then there exists $X_1'\in\cL(E_1)$ such that $X_1\lneq X_1'$ and
$\rho_1(X_1)=\rho_1(X_1')$. Therefore $X\lneq X_1'\oplus X_2=:X'$ and
$\rho_1(X_1')+\rho_2(X_2)-\dim(X'\cap V)\leq\rho_1(X_1)+\rho_2(X_2)-\dim(X\cap V)$.
The same argument applies to~$X_2$, and this shows that the minimum in \eqref{e-rhoVT} is attained by a space in $\cF_1\oplus\cF_2$.
This establishes \eqref{e-rhoVF}.
\\
2) Let $V\in\cL(E)$ and set
\begin{align*}
   M_1&=\min_{F_1\oplus F_2\in\cF_1\oplus\cF_2}\big(\rho_1(F_1)+\rho_2(F_2)-\dim((F_1\oplus F_2)\cap V)\big),\\
 M_2&=\min_{Z_1\oplus Z_2\in\cZ_1\oplus\cZ_2}\big(\rho_1(Z_1)+\rho_2(Z_2)-\dim((Z_1\oplus Z_2)\cap V)\big).
\end{align*}
Clearly $M_1\leq M_2$. For the converse let $\hat{F}:=\hat{F}_1\oplus \hat{F}_2\in\cF_1\oplus\cF_2$ be such that
$M_1=\rho_1(\hat{F}_1)+\rho_2(\hat{F}_2)-\dim(\hat{F}\cap V)$.
Set $Z_i=\cyc_i(\hat{F}_i)$, where $\cyc_i(\,\cdot\,)$ is the cyclic core in the \qM{}~$\cM_i$.
Then~$Z_i$ is in $\cZ_i$ thanks to \cref{L-FlatCyclic}.
Moreover, $Z=Z_1\oplus Z_2\leq \hat{F}$ and $\rho_i(\hat{F}_i)=\rho_i(Z_i)+\dim(\hat{F}_i/Z_i)$ by \cref{P-VcycVFormula}.
Now we compute
\begin{align*}
    M_1&=\rho_1(\hat{F}_1)+\rho_2(\hat{F}_2)-\dim(\hat{F}\cap V)\\
      &=\rho_1(Z_1)+\dim(\hat{F}_1/Z_1)+\rho_2(Z_2)+\dim(\hat{F}_2/Z_2)-\dim\hat{F}-\dim V+\dim(\hat{F}+V)\\
      &\geq\rho_1(Z_1)+\rho_2(Z_2)-\dim Z-\dim V+\dim(Z+V)\\
      &=\rho_1(Z_1)+\rho_2(Z_2)-\dim(Z\cap V)\geq M_2.
\end{align*}
Hence $M_1=M_2$, which establishes \eqref{e-rhoVZ}.
\end{proof}

The identity \eqref{e-rhoVZ} can be rewritten as the following convenient identity. Note its resemblance with \cref{C-RhoVFormula}.
Using this identity we will prove in the next section that $\cZ_1\oplus\cZ_2$ is the collection of cyclic flats of $\cM_1\oplus\cM_2$, and thus
it is indeed a special case of \cref{C-RhoVFormula}.

\begin{cor}\label{C-RhoDSZ}
In the situation of \cref{T-rhoVFlatsCyc} we have
\[
   \rho(V)=\min_{Z\in\cZ_1\oplus\cZ_2}\big(\rho(Z)+\dim(V+Z)/Z\big)\ \text{ for all }\ V\in\cL(E_1\oplus E_2).
\]
\end{cor}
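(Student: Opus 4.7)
The plan is to rewrite the expression in \eqref{e-rhoVZ} term-by-term so that it matches the form claimed. Let $Z = Z_1 \oplus Z_2 \in \cZ_1 \oplus \cZ_2$.

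First, I would invoke \cref{T-DirSumProp}(a), which gives $\rho(Z_1 \oplus Z_2) = \rho_1(Z_1) + \rho_2(Z_2)$, so the $\rho_1(Z_1) + \rho_2(Z_2)$ term in \eqref{e-rhoVZ} is simply $\rho(Z)$. Next, I would apply the standard dimension formula
\[
\dim(V + Z) = \dim V + \dim Z - \dim(V \cap Z),
\]
which rearranges to $\dim V - \dim((Z_1 \oplus Z_2) \cap V) = \dim(V + Z) - \dim Z = \dim((V+Z)/Z)$. Substituting both of these into \eqref{e-rhoVZ}, the summand becomes
\[
\rho(Z) + \dim((V+Z)/Z),
\]
and taking the minimum over $Z \in \cZ_1 \oplus \cZ_2$ yields the claimed identity.

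There is no real obstacle here; this is just a cosmetic rewriting of \cref{T-rhoVFlatsCyc}. The only point worth flagging is that the formula now has the same shape as \cref{C-RhoVFormula} applied to the \qM{} $\cM_1 \oplus \cM_2$, but with the minimum running over the (a priori smaller or different) collection $\cZ_1 \oplus \cZ_2$ rather than over $\cZ(\cM_1 \oplus \cM_2)$. The paper defers the identification $\cZ(\cM_1 \oplus \cM_2) = \cZ_1 \oplus \cZ_2$ to the next section, so the corollary as stated is purely a reformulation and requires no further argument beyond the dimension formula and \cref{T-DirSumProp}(a).
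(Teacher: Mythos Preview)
Your proof is correct and essentially identical to the paper's: both invoke \cref{T-DirSumProp}(a) to replace $\rho_1(Z_1)+\rho_2(Z_2)$ by $\rho(Z)$ and then use the dimension formula to convert $\dim V - \dim(Z\cap V)$ into $\dim((V+Z)/Z)$, substituting the result into \eqref{e-rhoVZ}. Your closing remark about the resemblance to \cref{C-RhoVFormula} and the deferred identification $\cZ(\cM_1\oplus\cM_2)=\cZ_1\oplus\cZ_2$ is accurate and matches what the paper itself notes just before the corollary.
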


\begin{proof}
Let $Z=Z_1\oplus Z_2\in\cZ_1\oplus\cZ_2$.  With the aid of \cref{T-DirSumProp}(a) we compute
\begin{align*}
   \rho_1(Z_1)+\rho_2(Z_2)-\dim(Z\cap V)&=\rho(Z)-\dim Z-\dim V+\dim(Z+V)\\
    &=\rho(Z)+\dim(Z+V)/Z-\dim V,
\end{align*}
and the result follows from \eqref{e-rhoVZ}.
\end{proof}

In the last part of this section we turn to the dual of the direct sum.
As shown next, it is the direct sum of the dual \qM{}s if taken with respect to compatible NSBFs.
Defining duality with respect to lattice anti-isomorphisms, the result below appears also in \cite[Thm.~50]{CeJu21}.
We will comment on the relation in \cref{R-LattAI-NSBF}.

\begin{theo}\label{T-DirSumDual}
Let $\cM_i=(E_i,\rho_i),\,i=1,2,$ be \qM{}s. Set $E=E_1\oplus E_2$.
Choose NSBFs $\inner{\cdot}{\cdot}_i$ on~$E_i$ and set
$\inner{v_1+v_2}{w_1+w_2}=\inner{v_1}{w_1}_1+\inner{v_2}{w_2}_2\ \text{ for all }v_i,w_i\in E_i$.
Then $\inner{\cdot}{\cdot}$ is an NSBF on~$E$ and
\[
    (\cM_1\oplus\cM_2)^*=\cM_1^*\oplus\cM_2^*,
\]
where $\cM_i^*$ and $(\cM_1\oplus\cM_2)^*$ are the dual $q$-matroids with respect to the given NSBFs.
\end{theo}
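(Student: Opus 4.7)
The strategy is to show that the rank functions of $(\cM_1\oplus\cM_2)^*$ and $\cM_1^*\oplus\cM_2^*$ agree on every $V\in\cL(E)$ by applying the efficient formula \eqref{e-rhoVT} to both sides and matching them via a change of variables $X_i\mapsto X_i^{\perp_i}$.

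First I would verify the two easy preliminaries:
(i) $\inner{\cdot}{\cdot}$ is indeed an NSBF on $E=E_1\oplus E_2$ (symmetry is clear; non-degeneracy follows from non-degeneracy of the summand forms together with the fact that $E_1$ and $E_2$ are $\inner{\cdot}{\cdot}$-orthogonal);
(ii) the orthogonal complement respects the splitting, i.e.\ $(X_1\oplus X_2)^\perp=X_1^{\perp_1}\oplus X_2^{\perp_2}$ for $X_i\in\cL(E_i)$. Part~(ii) is immediate: if $w=w_1+w_2$ with $w_i\in E_i$, the pairing $\inner{x_1+x_2}{w}=\inner{x_1}{w_1}_1+\inner{x_2}{w_2}_2$ vanishes for all $x_i\in X_i$ iff $w_i\in X_i^{\perp_i}$.

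Next I would write out both rank functions at $V\in\cL(E)$. Denote by $\sigma$ the rank function of $\cM_1^*\oplus\cM_2^*$ and by $\rho^*$ the rank function of $(\cM_1\oplus\cM_2)^*$. Applying \eqref{e-rhoVT} to $\sigma$ and substituting the dual formula $\rho_i^*(Y_i)=\dim Y_i+\rho_i(Y_i^{\perp_i})-\rho_i(E_i)$ gives
\[
\sigma(V)=\dim V+\min_{Y_1,Y_2}\!\Big(\dim Y_1+\rho_1(Y_1^{\perp_1})+\dim Y_2+\rho_2(Y_2^{\perp_2})-\rho_1(E_1)-\rho_2(E_2)-\dim\bigl((Y_1\oplus Y_2)\cap V\bigr)\Big).
\]
For $\rho^*(V)=\dim V+\rho(V^\perp)-\rho(E)$, applying \eqref{e-rhoVT} to $\rho(V^\perp)$ and using $\rho(E)=\rho_1(E_1)+\rho_2(E_2)$ from \cref{T-DirSumProp}(b) gives
\[
\rho^*(V)=\dim V+\dim V^\perp-\rho_1(E_1)-\rho_2(E_2)+\min_{X_1,X_2}\!\Big(\rho_1(X_1)+\rho_2(X_2)-\dim\bigl((X_1\oplus X_2)\cap V^\perp\bigr)\Big).
\]

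The heart of the argument is then a bijective change of variables $X_i=Y_i^{\perp_i}$ in the two minima. Using~(ii) one has $Y_1\oplus Y_2=(X_1\oplus X_2)^\perp$, and the standard orthogonality identity
\[
\dim\bigl(W^\perp\cap V\bigr)=\dim V-\dim W+\dim\bigl(W\cap V^\perp\bigr)
\]
applied with $W=X_1\oplus X_2$ converts $\dim((Y_1\oplus Y_2)\cap V)$ into $\dim V-\dim(X_1\oplus X_2)+\dim((X_1\oplus X_2)\cap V^\perp)$. Plugging this in, together with $\dim Y_i=\dim E_i-\dim X_i$ and $\dim V+\dim V^\perp=\dim E$, the two expressions for $\sigma(V)$ and $\rho^*(V)$ collapse to the same quantity
\[
\dim E-\rho_1(E_1)-\rho_2(E_2)+\min_{X_1,X_2}\!\Big(\rho_1(X_1)+\rho_2(X_2)-\dim\bigl((X_1\oplus X_2)\cap V^\perp\bigr)\Big),
\]
which completes the proof.

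The main obstacle is purely bookkeeping: keeping track of the three dimension identities (for $\dim Y_i$, for $\dim V^\perp$, and for $\dim((Y_1\oplus Y_2)\cap V)$) and confirming that everything telescopes cleanly. No deeper structural fact about \qM{}s is needed beyond the two formulas \eqref{e-rhoVT} and~\eqref{e-rhodual}, together with the compatibility of $\perp$ with the direct sum decomposition.
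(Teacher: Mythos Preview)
Your proposal is correct and follows essentially the same approach as the paper's proof: both arguments use the formula \eqref{e-rhoVT} for the direct-sum rank function, expand the dual rank functions $\rho_i^*$ via \eqref{e-rhodual}, exploit the compatibility $(X_1\oplus X_2)^\perp=X_1^{\perp_1}\oplus X_2^{\perp_2}$, and match the two minima through the bijection $X_i\leftrightarrow X_i^{\perp_i}$ on~$\cT$. The only cosmetic difference is that the paper starts from $\tilde{\rho}(V)$ and computes forward to $\rho^*(V)$ in a single chain of equalities, whereas you write both sides out and then show they coincide; your orthogonality identity $\dim(W^\perp\cap V)=\dim V-\dim W+\dim(W\cap V^\perp)$ is exactly the computation the paper does in its third and fourth displayed lines.
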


\begin{proof}
It is easy to see that $\inner{\cdot}{\cdot}$ is an NSBF on~$E$.
We denote the corresponding orthogonal space of $V\leq E$ by $V^\perp$, while
for $W\leq E_i$ we use $W^{\perp(i)}$ for the orthogonal of~$W$ in~$E_i$ with respect to $\inner{\cdot}{\cdot}_i$.
By construction we have
\[
   (V_1\oplus V_2)^\perp=V_1^{\perp(1)}\oplus V_2^{\perp(2)}\ \text{ for all }\ V_i\in\cL(E_i).
\]
Let $\cM_1^*\oplus\cM_2^*=(E,\tilde{\rho})$ and $\cM_1\oplus\cM_2=(E,\rho)$.
Then $(\cM_1\oplus\cM_2)^*=(E,\rho^*)$ with $\rho^*$ as in \eqref{e-rhodual}.
We have to show that $\rho^*(V)=\tilde{\rho}(V)$ for all $V\in\cL(E)$.
We will use  \eqref{e-rhoVT} for the rank function~$\rho$ (\cref{C-RhoDSZ} does not simplify the computations.).
Writing $X=X_1\oplus X_2$ for $X\in\cT$ we have
\[
   \tilde{\rho}(V)=\dim V+\min_{X\in\cT}\big(\rho_1^*(X_1)+\rho_2^*(X_2)-\dim(X\cap V)\big).
\]
With the aid of \cref{T-DirSumProp}(a) we obtain for any $X=X_1\oplus X_2\in\cT$
\begin{align*}
   &\rho_1^*(X_1)+\rho_2^*(X_2)-\dim(X\cap V)\\
       & =\dim X_1-\rho_1(E_1)+\rho_1(X_1^{\perp(1)})+\dim X_2-\rho_2(E_2)+\rho_2(X_2^{\perp(2)})-\dim(X\cap V)\\
       & =\dim X-\rho(E)+\rho(X^\perp)-\dim E+\dim(X\cap V)^\perp\\
       & =\dim X-\rho(E)+\rho(X^\perp)-\dim E+\dim(X^\perp+V^\perp)\\
        & =\dim X-\rho(E)+\rho(X^\perp)-\dim E+\dim X^\perp+\dim V^\perp-\dim(X^\perp\cap V^\perp)\\
        & =\dim V^\perp-\rho(E)+\rho(X^\perp)-\dim(X^\perp\cap V^\perp).
\end{align*}
Using $\{X^\perp\mid X\in\cT\}=\cT$ and again \eqref{e-rhoVT} we now arrive at
\begin{align*}
   \tilde{\rho}(V)&=\dim E-\rho(E)+\min_{X\in\cT}\big(\rho(X^\perp)-\dim(X^\perp\cap V^\perp)\big)\\
        &=\dim E-\rho(E)+\min_{X\in\cT}\big(\rho(X)-\dim(X\cap V^\perp)\big)\\
        &=\dim V-\rho(E)+\dim V^\perp+\min_{X\in\cT}\big(\rho_1(X_1)+\rho_2(X_2)-\dim(X\cap V^\perp)\big)\\
        &=\dim V-\rho(E)+\rho(V^\perp)=\rho^*(V),
\end{align*}
as desired.
\end{proof}

\begin{rem}\label{R-LattAI-NSBF}
In \cite[Def.~6]{CeJu21} the authors define duality of \qM{}s with respect to an involutory anti-isomorphism on the subspace lattice $\cL(E)$.
Denoting such an anti-isomorphism by $\perp$, the definition of $\cM^*$ reads exactly as in \cref{T-DualqM}.
Since the orthogonal complement with respect to a chosen NSBF induces a lattice anti-isomorphism, the duality result in \cite[Thm.~50]{CeJu21} appears
to be more general than \cref{T-DirSumDual}.
However, as we now briefly discuss, the two results differ only by a semi-linear isomorphism on~$E$ (if $\dim E\geq3$).
Indeed, choose an NSBF on~$E$ and denote the corresponding orthogonal space of~$V\leq E$ by
$V^{\raisebox{.2ex}{\mbox{$\scriptstyle\perp\hspace*{-.55em}\underline{\hspace*{.52em}}$}}}\hspace*{.2em}$.
Then $\tau:\cL(E)\longrightarrow\cL(E),\ V\longmapsto V^{\raisebox{.2ex}{\mbox{$\scriptstyle\perp\hspace*{-.55em}\underline{\hspace*{.52em}}$}}}\hspace*{.2em}$ is an anti-isomorphism on the lattice $\cL(E)$.
Let now $\perp$ be any anti-isomorphism on $\cL(E)$. Then $\tau\circ\!\perp$ is a lattice isomorphism and thanks to the Fundamental
Theorem of Projective Geometry (see for instance \cite[Ch.~II.10]{Art57} or \cite[Thm.~1]{Put})
there exists a semi-linear isomorphism $f:E\longrightarrow E$ such that $\tau(V^\perp)=f(V)$ for all $V\in\cL(E)$.
In other words,
 $(V^\perp)^{\raisebox{.2ex}{\mbox{$\scriptstyle\perp\hspace*{-.55em}\underline{\hspace*{.52em}}$}}}\hspace*{.2em}=f(V)$ or
$V^\perp=f(V)^{\raisebox{.2ex}{\mbox{$\scriptstyle\perp\hspace*{-.55em}\underline{\hspace*{.52em}}$}}}\hspace*{.2em}$ for all $V\in\cL(E)$.
This shows that the lattice anti-isomorphism $\perp$ differs from the one induced by the chosen NSBF by the semi-linear isomorphism on~$E$.
Denote the dual rank function of~$\cM=(E,\rho)$ with respect to $\perp$ and $\tau$ by
$\rho^{*(\perp)}$ and $\rho^{*(\raisebox{.2ex}{\mbox{$\scriptstyle\perp\hspace*{-.55em}\underline{\hspace*{.52em}}$}})}$,
respectively.
Then
$\rho^{*(\perp)}(V)=\rho^{*(\raisebox{.2ex}{\mbox{$\scriptstyle\perp\hspace*{-.55em}\underline{\hspace*{.52em}}$}})}(f(V))$ and thus the two
dual \qM{}s differ only by the semi-linear isomorphism~$f$.
This shows that \cite[Thm.~50]{CeJu21} is a consequence \cref{T-DirSumDual} above, which has a significantly shorter and simpler proof.
\end{rem}

\section{The Cyclic Flats of the Direct Sum}\label{S-DSProperties}
In this short section we show that the cyclic flats of a direct sum $\cM_1\oplus\cM_2$ is the collection of the direct sums of the cyclic flats of the two components~$\cM_1$ and~$\cM_2$.
The following lemma is needed.

\begin{lemma}\label{L-Z2}\
\begin{alphalist}
\item Let $\cM=(E,\rho)$ be a \qM{}. Suppose $F\in\cF(\cM)$ and $O\in\cO(\cM)$ are such that $F\lneq O$. Then
         $0<\rho(O)-\rho(F)<\dim O-\dim F$.
\item Let $\cM_i=(E_i,\rho_i),\,i=1,2,$ be \qM{}s and~$\rho$ be the rank function of $\cM_1\oplus\cM_2$.
         Let $F=F_1\oplus F_2$ with $F_i\in\cF(\cM_i)$ and $O=O_1\oplus O_2$ with $O_i\in\cO(\cM_i)$
         be such that $F\lneq O$.
         Then $0<\rho(O)-\rho(F)<\dim O-\dim F$.
\end{alphalist}
\end{lemma}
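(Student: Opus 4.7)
I would establish the two strict inequalities separately. For the lower bound $\rho(O) > \rho(F)$, the idea is to exploit the flat property of $F$. Since $F \lneq O$, pick any $x \in O \setminus F$; because $F$ is a flat, $\rho(F + \langle x \rangle) = \rho(F) + 1$, and then monotonicity (R2) applied to $F + \langle x \rangle \leq O$ yields $\rho(O) \geq \rho(F) + 1$.

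For the upper bound $\rho(O) - \rho(F) < \dim O - \dim F$, the key observation is that $O$ being open/cyclic means, by \eqref{e-cyclicopen}, that $\rho(W) = \rho(O)$ for every hyperplane $W$ of $O$. Set $k = \dim O - \dim F \geq 1$ and build any chain of subspaces $F = V_0 \lneq V_1 \lneq \cdots \lneq V_k = O$ with $\dim V_{i+1} = \dim V_i + 1$. The rank can grow by at most~$1$ at each step, so $\rho(V_{k-1}) \leq \rho(F) + (k-1)$. But $V_{k-1} \in \Hyp(O)$ forces $\rho(V_{k-1}) = \rho(O)$, hence $\rho(O) \leq \rho(F) + k - 1 < \rho(F) + k$, as required. (As a sanity check, combining the two bounds rules out $k = 1$, which is consistent with the fact that a flat cannot be a hyperplane of a strictly larger cyclic space.) I do not anticipate serious obstacles here; the whole argument is a direct exploitation of the defining properties of flats and cyclic spaces.

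\textbf{Plan for part (b).} Here I would immediately reduce to part~(a) using the direct sum. By \cref{T-DirSumProp}(a), $\rho(F) = \rho_1(F_1) + \rho_2(F_2)$ and $\rho(O) = \rho_1(O_1) + \rho_2(O_2)$, and also $\dim F = \dim F_1 + \dim F_2$, $\dim O = \dim O_1 + \dim O_2$. The inclusion $F_1 \oplus F_2 \leq O_1 \oplus O_2$ projects componentwise to $F_i \leq O_i$ for $i = 1, 2$, and $F \lneq O$ means at least one of these inclusions is strict.

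For each index $i$ with $F_i \lneq O_i$, part~(a) applied inside $\cM_i$ yields $0 < \rho_i(O_i) - \rho_i(F_i) < \dim O_i - \dim F_i$; for any index with $F_i = O_i$, the corresponding differences are both~$0$. Adding the two contributions gives $0 < \rho(O) - \rho(F) < \dim O - \dim F$, where the strict lower bound is guaranteed because at least one component inclusion is proper. The main (and only) nontrivial input is part~(a); the rest is straightforward additivity for direct sums.
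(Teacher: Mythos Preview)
Your proposal is correct and follows essentially the same approach as the paper. The only minor variation is in the upper bound of part~(a): the paper writes $O=F\oplus T$, picks $U\in\Hyp(T)$, and uses submodularity together with cyclicity to get $\rho(O)=\rho(F\oplus U)\leq\rho(F)+\rho(U)<\rho(F)+\dim T$, whereas you use a chain from~$F$ to~$O$ and the fact that the penultimate space is a hyperplane of~$O$; both arguments exploit exactly the same ingredients, and part~(b) is handled identically via \cref{T-DirSumProp}(a).
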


\begin{proof}
Since $F$ is a flat, we clearly have $0<\rho(O)-\rho(F)$.
Furthermore, let $O=F\oplus T$ for some $T\leq O$. Then (R1)--(R3) for~$\rho$ imply
$\rho(O)\leq\rho(F)+\rho(T)\leq\rho(F)+\dim T=\rho(F)+\dim O-\dim F$.
We show that the first inequality is strict.
To do so, let $U\in\Hyp(T)$.
Then $F\oplus U\in\Hyp(O)$.
Thus cyclicity of~$O$ implies $\rho(O)=\rho(F\oplus U)\leq \rho(F)+\rho(U)<\rho(F)+\dim T$, as desired.
\\
(b) By assumption $F_i\leq O_i$ for $i=1,2$.
Without loss of generality we may assume $F_1\lneq O_1$.
With the aid of \cref{T-DirSumProp}(a) and Part~(a) we compute
\begin{align*}
   \rho(O)-\rho(F)&=\rho_1(O_1)-\rho_1(F_1)+\rho_2(O_2)-\rho_2(F_2)\\
   &<\dim O_1-\dim F_1+\dim O_2-\dim F_2=\dim O-\dim F.
\end{align*}
The second expression also shows that $\rho(O)-\rho(F)>0$.
\end{proof}

Now we are ready for our main result.

\begin{theo}\label{T-CycFlatsDS}
Let $\cM_i=(E_i,\rho_i),\,i=1,2,$ be \qM{}s and $\cZ_i=\cZ(\cM_i)$.
As in \cref{N-DSCollections} let $\cZ_1\oplus\cZ_2=\{Z_1\oplus Z_2\mid Z_i\in\cZ_i\}$.
Then
\[
       \cZ(\cM_1\oplus\cM_2)=\cZ_1\oplus\cZ_2.
\]
\end{theo}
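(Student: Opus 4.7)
For the inclusion $\cZ_1\oplus\cZ_2 \subseteq \cZ(\cM_1\oplus\cM_2)$, let $Z_i\in\cZ_i$ and set $\cM := \cM_1\oplus\cM_2$, $Z := Z_1\oplus Z_2$. I first verify that the embedding $\iota_i(C)$ of any circuit $C$ of $\cM_i$ is itself a circuit of $\cM$, using $\rho(\iota_i(V)) = \rho_i(V)$ from \cref{T-DirSumProp}(a) to check dependency of $\iota_i(C)$ and independence of its proper subspaces. Since each $Z_i$ is a sum of circuits of $\cM_i$ by \cref{T-CycCoreOpen}, $Z$ is then a sum of circuits of $\cM$ via these embeddings, hence open in $\cM$. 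For flatness I use duality: $Z_i$ being a flat of $\cM_i$ means $Z_i^\perp$ is open in $\cM_i^*$ by \cref{T-Basics}(a) applied to $\cM_i^*$; the openness result just established yields that $(Z_1\oplus Z_2)^\perp = Z_1^\perp \oplus Z_2^\perp$ is open in $\cM_1^*\oplus\cM_2^* = \cM^*$ (choosing a compatible NSBF as in \cref{T-DirSumDual}); a final application of \cref{T-Basics}(a) shows that $Z$ is a flat of $\cM$.

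For the reverse inclusion, let $Z\in\cZ(\cM)$; the plan is to show $Z = (Z\cap E_1)\oplus(Z\cap E_2)$ with each $Z\cap E_i$ a cyclic flat of $\cM_i$. The key preliminary, which I expect to be the main obstacle, is: \emph{for every circuit $C$ of $\cM$ and $i\in\{1,2\}$, $\pi_i(C)\leq \cl(C)$}. I prove this by a direct computation with the rank formula \eqref{e-rho}. For $x = \pi_i(c)$ with $c\in C$ and $x\notin C$, the fact that $x\in\pi_i(C)$ gives $\pi_j(C+\langle x\rangle) = \pi_j(C)$ for both $j\in\{1,2\}$, so evaluating the minimization at $X = C+\langle x\rangle$ yields $\rho_1'(X)+\rho_2'(X)-\dim X = (\dim C - 1) - (\dim C + 1) = -2$, whence $\rho(C+\langle x\rangle) \leq \dim C - 1 = \rho(C)$; monotonicity forces equality, giving $x\in\cl(C)$. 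With this in hand, openness of $Z$ expresses $Z$ as a sum of its circuits (\cref{T-CycCoreOpen}), so $\pi_i(Z)$ equals the sum of the $\pi_i(C)$, which is contained in $\cl(Z) = Z$ (using flatness of $Z$ and monotonicity of closure). Combined with the trivial $Z\cap E_i \leq \pi_i(Z)$ this gives $\pi_i(Z) = Z\cap E_i$, and therefore $Z \leq \pi_1(Z)\oplus\pi_2(Z) = (Z\cap E_1)\oplus(Z\cap E_2)\leq Z$, establishing the decomposition.

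Setting $Z_i := Z\cap E_i$, it remains to verify $Z_i\in\cZ_i$. For flatness, any $x\in E_i\setminus Z_i$ satisfies $x\notin Z$ (since $x\in E_i$ and $Z\cap E_i = Z_i$), so flatness of $Z$ in $\cM$ yields $\rho(Z+\langle x\rangle) > \rho(Z)$; writing $Z+\langle x\rangle = (Z_i+\langle x\rangle)\oplus Z_{3-i}$ and applying \cref{T-DirSumProp}(a) to both sides reduces this to $\rho_i(Z_i+\langle x\rangle) > \rho_i(Z_i)$. For openness, any $x\in Z_i$ and hyperplane $W$ of $Z_i$ with $x\notin W$ produce a hyperplane $\tilde{W} := W\oplus Z_{3-i}$ of $Z$ with $\tilde{W}+\langle x\rangle = Z$ and $x\notin\tilde{W}$; cyclicity of $Z$ in $\cM$ then forces $\rho(\tilde{W}) = \rho(Z)$, which once more via \cref{T-DirSumProp}(a) gives $\rho_i(W) = \rho_i(Z_i)$, completing the verification that $Z_i\in\cZ_i$.
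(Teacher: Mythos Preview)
Your proof is correct and takes a genuinely different route from the paper's.

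For the inclusion $\cZ_1\oplus\cZ_2\subseteq\cZ(\cM)$, the paper works entirely with the rank formula of \cref{C-RhoDSZ}: given $V=V_1\oplus V_2$ with $V_i\in\cZ_i$, it expresses $\rho(V+\subspace{x})$ (resp.\ $\rho(D)$ for $D\in\Hyp(V)$) via a minimizer $\hat{Z}\in\cZ_1\oplus\cZ_2$, then invokes \cref{L-Z2}(b) to force the right inequalities. Your argument is structural instead: you observe that $\cC(\cM_i)\subset\cC(\cM)$ (this is \cref{R-DSPropSpaces}), whence openness of $Z_1\oplus Z_2$ is immediate, and then obtain flatness by duality through \cref{T-DirSumDual}. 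This bypasses \cref{T-rhoVFlatsCyc}/\cref{C-RhoDSZ} entirely and is arguably more conceptual; the cost is the dependence on \cref{T-DirSumDual}.

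For the reverse inclusion the difference is sharper. The paper again uses \cref{C-RhoDSZ}: pick $\hat{Z}\in\cZ_1\oplus\cZ_2$ realizing the minimum for~$Z$, then use cyclicity to force $Z\leq\hat{Z}$ and flatness to force $\hat{Z}\leq Z$. Your key lemma --- that $\pi_i(C)\leq\cl(C)$ for every circuit~$C$ of~$\cM$ --- is new and interesting in its own right. One small point: your line ``$\rho_1'(X)+\rho_2'(X)=\dim C-1$'' tacitly uses that the minimum in \eqref{e-rho} for $V=C$ is attained at $X=C$; this follows from \cref{P-XList} since proper subspaces of a circuit are independent and hence lie outside~$\cX$. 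With that filled in, the computation is clean and the decomposition $Z=(Z\cap E_1)\oplus(Z\cap E_2)$ follows neatly. The paper's route is shorter once \cref{C-RhoDSZ} is in hand; yours extracts an independent structural fact about circuits of the direct sum and avoids the auxiliary \cref{L-Z2}.
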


\begin{proof}
``$\supseteq$'' Let $V\in\cZ_1\oplus\cZ_2$. Then $V=V_1\oplus V_2$ for some $V_i\in\cZ_i$.
\\
a) We show that $V$ is a flat in $\cM_1\oplus\cM_2$. Let $x\in E\setminus V$.
We need to show that $\rho(V+\subspace{x})=\rho(V)+1$.
By \cref{C-RhoDSZ} there exists $\hat{Z}=\hat{Z}_1\oplus\hat{Z}_2\in\cZ_1\oplus\cZ_2$ such that
\[
   \rho(V+\subspace{x})=\rho(\hat{Z})+\dim((V+\subspace{x}+\hat{Z})/\hat{Z}).
\]
If $\hat{Z}=V$, then this implies $\rho(V+\subspace{x})=\rho(V)+1$, as desired.
Let now $\hat{Z}\neq V$ and set $F:=V\cap\hat{Z}=(V_1\cap\hat{Z}_1)\oplus(V_2\cap\hat{Z}_2)$.
Since each $V_i\cap\hat{Z}_i$ is a flat (see \cref{T-Basics}),~$F$ is of the form $F=F_1\oplus F_2$, where $F_i\in\cF(\cM_i)$.
By assumption~$V_i$ are cyclic spaces, and thus we may apply \cref{L-Z2}(b) to $F\lneq V$.
This leads to
\begin{equation}\label{e-rhoVZhat}
   \rho(V)-\rho(\hat{Z})\leq \rho(V)-\rho(F)<\dim V-\dim(F)=\dim(V/F),
\end{equation}
which in turn implies
\[
  \rho(V+\subspace{x})=\rho(\hat{Z})+\dim((V+\subspace{x}+\hat{Z})/\hat{Z})
     >\rho(V)-\dim(V/F)+\dim((V+\subspace{x}+\hat{Z})/\hat{Z}).
\]
Since $\dim(V/F)=\dim(V/(V\cap\hat{Z}))=\dim((V+\hat{Z})/\hat{Z})\leq\dim((V+\subspace{x}+\hat{Z})/\hat{Z})$, we conclude that
$\rho(V+\subspace{x})>\rho(V)$, as desired.
\\
b) We show that~$V$ is cyclic. Let $D\in\Hyp(V)$. By \cref{C-RhoDSZ} there exists $\hat{Z} \in\cZ_1\oplus\cZ_2$ such that
\[
   \rho(D)=\rho(\hat{Z})+\dim((D+\hat{Z})/\hat{Z}).
\]
If $\hat{Z}=V$, then this implies $\rho(D)=\rho(V)$, as desired. Thus let $\hat{Z}\neq V$.
As above, we set $F:=V\cap\hat{Z}$ and apply \cref{L-Z2}(b) to $F\lneq V$.
Thus we have again \eqref{e-rhoVZhat} and compute
\begin{align*}
  \rho(D)&=\rho(\hat{Z})+\dim((D+\hat{Z})/\hat{Z})\\
     &>\rho(V)-\dim(V/V\cap\hat{Z})+\dim(D/D\cap\hat{Z})=\rho(V)-1+\dim(V\cap\hat{Z})-\dim(D\cap\hat{Z})\\
    &\geq\rho(V)-1.
\end{align*}
This shows $\rho(D)=\rho(V)$.
\\
``$\subseteq$'' Let $Z\in\cZ(\cM_1\oplus\cM_2)$. Again, by \cref{C-RhoDSZ} there exists $\hat{Z}\in\cZ_1\oplus\cZ_2$ such that
\begin{equation}\label{e-rhoZZhat}
   \rho(Z)=\rho(\hat{Z})+\dim((Z+\hat{Z})/\hat{Z}).
\end{equation}
Since $Z$ is cyclic, every space $D\in\Hyp(Z)$ satisfies
\[
   \rho(D)=\rho(Z)=\rho(\hat{Z})+\dim((Z+\hat{Z})/\hat{Z})\geq\rho(\hat{Z})+\dim((D+\hat{Z})/\hat{Z})\geq\rho(D),
\]
and hence $\dim((D+\hat{Z})/\hat{Z})=\dim((Z+\hat{Z})/\hat{Z})$. Since this is true for every $D\in\Hyp(Z)$, we conclude $Z\leq\hat{Z}$.
Next,~$Z$ is a flat and thus every $x\in E\setminus Z$ satisfies
\[
  \rho(Z)<\rho(Z+\subspace{x})\leq\rho(\hat{Z})+\dim((Z+\subspace{x}+\hat{Z})/\hat{Z}).
\]
Together with \eqref{e-rhoZZhat} this implies $\dim(Z+\hat{Z})<\dim(Z+\subspace{x}+\hat{Z})$, and thus $x\not\in\hat{Z}$.
Since this is true for every $x\in E\setminus Z$, we conclude that $\hat{Z}\leq Z$.
All of this shows that $Z=\hat{Z}$ and thus $Z\in\cZ_1\oplus\cZ_2$.
This concludes the proof.
\end{proof}

\cref{T-CycFlatsDS} in combination with  \cref{C-RhoDSZ} immediately implies
associativity of the direct sum operation (which is not obvious from the very definition of the direct sum).
The result will be crucial for the decomposition of \qM{}s in the next section.

\begin{cor}\label{C-DSAssoc}
Let $\cM_i,\,i=1,2,3$, be \qM{}s. Then $(\cM_1\oplus\cM_2)\oplus\cM_3=\cM_1\oplus(\cM_2\oplus\cM_3)$.
\end{cor}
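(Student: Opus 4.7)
The plan is to invoke \cref{T-ZDeterminesM}, which asserts that a \qM{} is completely determined by its collection of cyclic flats together with their rank values. Both $(\cM_1\oplus\cM_2)\oplus\cM_3$ and $\cM_1\oplus(\cM_2\oplus\cM_3)$ are \qM{}s on the common ground space $E_1\oplus E_2\oplus E_3$ (using associativity of the direct sum of vector spaces), so it suffices to show that they have identical collections of cyclic flats and agree on the rank values of those cyclic flats.

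Setting $\cZ_i=\cZ(\cM_i)$ for $i=1,2,3$, two successive applications of \cref{T-CycFlatsDS} give
\[
\cZ\bigl((\cM_1\oplus\cM_2)\oplus\cM_3\bigr)=\cZ(\cM_1\oplus\cM_2)\oplus\cZ_3=(\cZ_1\oplus\cZ_2)\oplus\cZ_3,
\]
and symmetrically $\cZ\bigl(\cM_1\oplus(\cM_2\oplus\cM_3)\bigr)=\cZ_1\oplus(\cZ_2\oplus\cZ_3)$. Both collections coincide with $\{Z_1\oplus Z_2\oplus Z_3\mid Z_i\in\cZ_i\}$, establishing equality of the cyclic-flat collections.

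To compare rank values, I would apply \cref{T-DirSumProp}(a) twice: for any $Z_i\in\cZ_i$, the rank of $Z_1\oplus Z_2\oplus Z_3$ in $(\cM_1\oplus\cM_2)\oplus\cM_3$ equals $\rho_{\cM_1\oplus\cM_2}(Z_1\oplus Z_2)+\rho_3(Z_3)=\rho_1(Z_1)+\rho_2(Z_2)+\rho_3(Z_3)$, and the same computation performed with the other bracketing yields the same value. Hence the rank functions coincide on the common collection of cyclic flats, and \cref{T-ZDeterminesM} forces the two \qM{}s to be identical. Alternatively, \cref{C-RhoDSZ} can be used directly: both bracketings give the same formula $\rho(V)=\min_{Z}(\rho(Z)+\dim(V+Z)/Z)$ over the same set of $Z$'s with the same rank values.

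I do not anticipate any genuine obstacle here; the content of the statement has already been absorbed into \cref{T-CycFlatsDS} and \cref{T-ZDeterminesM}, and the proof reduces to bookkeeping the associativity of the direct sum of vector spaces and iterating those two results. The only mild point of care is to observe explicitly that the ground spaces $(E_1\oplus E_2)\oplus E_3$ and $E_1\oplus(E_2\oplus E_3)$ are identified via the canonical isomorphism, so that equality of \qM{}s (rather than mere equivalence) is what is being asserted.
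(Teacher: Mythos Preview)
Your proposal is correct and matches the paper's approach: the paper simply states that \cref{T-CycFlatsDS} combined with \cref{C-RhoDSZ} immediately yields associativity, which is precisely your alternative route (and your primary route via \cref{T-ZDeterminesM} plus \cref{T-DirSumProp}(a) is an equally valid minor variant of the same idea). You have in fact supplied more detail than the paper does.
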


The analogous identity as in \cref{T-CycFlatsDS} is not true for the flats, independent spaces, circuits etc.

\begin{rem}\label{R-DSPropSpaces}
Let $\cM_i=(E_i,\rho_i),\,i=1,2,$ and $\cM=\cM_1\oplus\cM_2$. Recall \cref{N-DSCollections} and
denote by $\cI(\,\cdot\,)$ the collection of independent spaces of a \qM.
With the aid of \cref{T-DirSumProp}  it is easy to see that
$\cI(\cM_1)\oplus\cI(\cM_2)\subset\cI(\cM),\ \;  \cF(\cM_1)\oplus\cF(\cM_2)\subset\cF(\cM),\ \; \cO(\cM_1)\oplus\cO(\cM_2)\subset\cO(\cM)$,
and $\cC(\cM_i)\subset\cC(\cM)$ for $i=1,2$. In general equality does not hold in any of these cases.
\end{rem}

We illustrate the discrepancies in the following example.

\begin{exa}\label{E-F27}
Let $\F=\F_2$ and consider $\F_{2^3}$ with primitive element $\omega$ satisfying $\omega^3+\omega+1=0$.
Let
\[
    G_1=\begin{pmatrix}1&0&\omega^3\\0&1&\omega\end{pmatrix},\
    G_2=\begin{pmatrix}1&0&\omega^3&\omega\\0&1&\omega^4&\omega^2\end{pmatrix},\
    G=\begin{pmatrix}1&0&\omega^3&0&0&0&0\\0&1&\omega&0&0&0&0\\
                      0&0&0&1&0&\omega^3&\omega\\0&0&0&0&1&\omega^4&\omega^2\end{pmatrix}.
\]
Note that~$G$ is the block diagonal matrix with diagonal blocks~$G_1$ and~$G_2$.
Let $\cM_i=\cM_{G_i}$ and $\cN=\cM_G$, i.e., they are the \qM{}s represented by~$G_1,\,G_2$, and~$G$, respectively (see \cref{D-ReprG}).
Furthermore, let $\cM=\cM_1\oplus\cM_2$.
Thus both~$\cM$ and~$\cN$ have ground space~$\F^7$.
In the following table we list the number of flats, cyclic spaces,  etc. of all these \qM{}s.
\[
\begin{array}{|c||c|c|c|c|c|c|c|}
\hline
            &\text{flats}&\text{cyclic spaces}&\text{cyclic flats}&\text{ind.\ spaces}&\text{dep.\ spaces}&\text{circuits}&\text{bases}\\ \hline\hline
\rule[-.15cm]{0cm}{0.63cm}\cM_1 &  7 & 2 & 2 & 14 & 2 & 1 & 6\\ \hline
\rule[-.15cm]{0cm}{0.63cm}\cM_2& 11 & 11 & 5 &48 & 19 & 9 & 32\\ \hline
\rule[-.15cm]{0cm}{0.63cm} \cN&  2201 & 124 &40 & 24108 & 5104 & 73 & 9792\\ \hline
\rule[-.15cm]{0cm}{0.63cm}\cM & 7541 & 412 & 10 & 24861 & 4351 & 355 & 10416\\ \hline
\end{array}
\]
Note that the 10 cyclic flats of~$\cM$ are consistent with \cref{T-CycFlatsDS}.
It is remarkable that~$\cM$ has significantly more flats and cyclic spaces than $\cN$, yet fewer cyclic flats.
Furthermore, one can verify that the cyclic flats of~$\cM$ are also cyclic flats of $\cN$.
We will prove this in generality in \cref{T-CycFlatsDSGG} below.
Finally, all independent spaces of~$\cN$ are also independent spaces of~$\cM$.
This also follows from \cref{T-CycFlatsDSGG}.
\end{exa}

Our last result in this section concerns representable \qM{}s $\cM_1,\,\cM_2$.
It is well known that the direct sum of such \qM{}s is in general not representable; see \cite{GLJ22Rep}.
In particular, in contrast to the matroid case, the block diagonal matrix built from representing matrices~$G_1$ and~$G_2$
represents in general a \qM{}~$\cN$ that is different from $\cM_1\oplus\cM_2$.
The following result shows that the cyclic flats of $\cM_1\oplus\cM_2$ are also cyclic flats of~$\cN$.

\begin{theo}\label{T-CycFlatsDSGG}
Let $\F_{q^m}$ be a field extension of~$\F=\F_q$ and $G_i\in\F_{q^m}^{a_i\times n_i},\,i=1,2,$ be matrices of full row rank.
Set
\[
    G=\begin{pmatrix}G_1&0\\0&G_2\end{pmatrix}\in\F_{q^m}^{(a_1+a_2)\times(n_1+n_2)}.
\]
Denote by $\cM_i=(\F^{n_i},\rho_i),\,i=1,2,$ and $\cN=(\F^{n_1+n_2},\hat{\rho})$ the $q$-matroids represented by $G_1,G_2$, and~$G$, thus
$\rho_i(\rowsp(Y))=\rk(G_iY\T)$ for $Y\in\F^{y\times n_i}$ and $\hat{\rho}(\rowsp(Y))=\rk(GY\T)$ for $Y\in\F^{y\times(n_1+n_2)}$.
\begin{alphalist}
\item If $F\in\cF(\cM_1)\oplus\cF(\cM_2)$, then $F\in\cF(\cN)$.
\item If $O\in\cO(\cM_1)\oplus\cO(\cM_2)$, then $O\in\cO(\cN)$.
\end{alphalist}
As a consequence, $\cZ(\cM_1\oplus\cM_2)\subseteq\cZ(\cN)$ and thus $\cI(\cN)\subseteq\cI(\cM_1\oplus\cM_2)$.
\end{theo}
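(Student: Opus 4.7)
The engine behind both parts is the block-diagonal structure of~$G$. For any product subspace $V_1 \oplus V_2$ with $V_i \in \cL(\F^{n_i})$, choosing a representing matrix in block-diagonal form makes $GY\T$ block-diagonal, yielding the identity
\[
   \hat\rho(V_1 \oplus V_2) = \rho_1(V_1) + \rho_2(V_2).
\]
I will record this observation at the outset and use it throughout. No other structural fact about $\cN$ is needed.

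For part (a), fix $F = F_1 \oplus F_2$ with $F_i \in \cF(\cM_i)$ and let $x = (x_1, x_2) \in \F^{n_1 + n_2} \setminus F$. Writing $F_i = \rowsp(Y_i)$, the space $F + \subspace{x}$ is the row space of the matrix obtained by appending the row $(x_1,x_2)$ to the block-diagonal matrix with diagonal blocks $Y_1,Y_2$; multiplication by~$G$ then produces a matrix whose extra column is $\Smalltwomat{G_1 x_1\T}{G_2 x_2\T}$. This column enlarges the (block) column span unless both $G_i x_i\T \in \colsp(G_i Y_i\T)$, i.e.\ unless $\rho_i(F_i + \subspace{x_i}) = \rho_i(F_i)$ for both~$i$. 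Flatness of each $F_i$ in~$\cM_i$ then forces $x_i \in F_i$ for both~$i$, contradicting $x \notin F$. Hence $\hat\rho(F + \subspace{x}) > \hat\rho(F)$, establishing $F \in \cF(\cN)$.

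For part (b), fix $O = O_1 \oplus O_2$ with $O_i \in \cO(\cM_i)$ and $W \in \Hyp(O)$. Since monotonicity gives $\hat\rho(W) \leq \hat\rho(O)$, it suffices to exhibit a product subspace $W_1' \oplus W_2' \leq W$ with $\rho_i(W_i') = \rho_i(O_i)$; the block-diagonal identity then forces $\hat\rho(W) \geq \rho_1(O_1) + \rho_2(O_2) = \hat\rho(O)$. I split on whether $\iota_i(O_i) \leq W$: if $\iota_1(O_1) \leq W$ and $\iota_2(O_2) \not\leq W$, a dimension count forces $W \cap \iota_2(O_2)$ to be a hyperplane of~$O_2$ and $W = O_1 \oplus (W \cap \iota_2(O_2))$, with the rank equality following from cyclicity of~$O_2$; the swapped case is symmetric; in the remaining case, $W_i' := W \cap \iota_i(O_i)$ is a hyperplane of~$O_i$ for both~$i$, and cyclicity of both $O_1$ and $O_2$ yields the equalities $\rho_i(W_i') = \rho_i(O_i)$.

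The consequence follows by combining (a) and (b) with \cref{T-CycFlatsDS}: every $Z \in \cZ(\cM_1 \oplus \cM_2)$ has the form $Z_1 \oplus Z_2$ with $Z_i \in \cZ(\cM_i)$, hence lies in $\cF(\cN) \cap \cO(\cN) = \cZ(\cN)$. Then for any $V \in \cI(\cN)$, \cref{T-ZDeterminesM} applied to~$\cN$ bounds $\dim(V \cap Z) \leq \hat\rho(Z)$ for every $Z \in \cZ(\cN)$, in particular for each $Z = Z_1 \oplus Z_2 \in \cZ(\cM_1 \oplus \cM_2)$; the block-diagonal identity together with \cref{T-DirSumProp}(a) identifies $\hat\rho(Z)$ with the rank of~$Z$ in $\cM_1 \oplus \cM_2$, so \cref{T-ZDeterminesM} applied to $\cM_1 \oplus \cM_2$ delivers $V \in \cI(\cM_1 \oplus \cM_2)$. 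The only delicate piece is the three-way case split in~(b), which demands careful bookkeeping but presents no deep obstruction.
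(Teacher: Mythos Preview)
Your proof is correct. Part~(a) and the consequence match the paper's argument essentially verbatim (the paper phrases~(a) directly rather than by contrapositive, but the content is identical).

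In part~(b), your case split is logically equivalent to the paper's---your ``neither $\iota_i(O_i)\le W$'' case coincides with the paper's case $\pi_1(W)\oplus\pi_2(W)=O$---but your treatment of that case is genuinely more elementary. The paper writes $W=\rowsp(M_1\mid M_2)$, performs an explicit row reduction to isolate a single mixed row $(m_1\mid m_2)$, and then argues with column spaces of the blocks $G_iM_i\T$. You instead observe that $W_i':=W\cap\iota_i(O_i)$ is a hyperplane of~$O_i$, invoke cyclicity of~$O_i$ to get $\rho_i(W_i')=\rho_i(O_i)$, and sandwich $\hat\rho(W)$ between $\hat\rho(W_1'\oplus W_2')$ and $\hat\rho(O)$ via monotonicity and the block-diagonal identity. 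This avoids any matrix manipulation beyond the single identity $\hat\rho(V_1\oplus V_2)=\rho_1(V_1)+\rho_2(V_2)$, and is the cleaner route. The paper's matrix argument has the minor advantage of making the role of the representing matrix~$G$ visible throughout, but your argument shows that after establishing the block-diagonal rank identity, the rest of~(b) is purely a statement about the abstract rank functions.
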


Note that the very last statement about the independent spaces also follows from the coproduct property of $\cM_1\oplus\cM_2$, discussed in \cref{T-DirSumCoPr}.

\begin{proof}
The first part of the consequence follows from \cref{T-CycFlatsDS} and the second part from  \cref{T-ZDeterminesM}.
For the proof of~(a) and~(b) we will make use of the fact
\begin{equation}\label{e-rho12}
   \hat{\rho}(V_1\oplus V_2)=\rho_1(V_1)+\rho_2(V_2)\ \text{ for all }V_i\in\cL(\F^{n_i}).
\end{equation}
This follows directly from the block diagonal form of~$G$ and the fact that every space $V_1\oplus V_2$ is the row space of a block diagonal matrix as well.
\\
(a) Let $F\in\cF(\cM_1)\oplus\cF(\cM_2)$ and $x\in\F^{n_1+n_2}\setminus F$.
We want to show that $\hat{\rho}(F+\subspace{x})>\hat{\rho}(F)$.
By assumption,
\[
    F=\rowsp\begin{pmatrix}Y_1&0\\0&Y_2\end{pmatrix}\ \text{ for some $Y_i\in\F^{y_i\times n_i}$ such that $\rowsp(Y_i)\in\cF(\cM_i)$.}
\]
Write $x=(x_1\mid x_2)$ with $x_i\in\F^{n_i}$. Without loss of generality let $x_1\not\in\rowsp(Y_1)$.
Since $\rowsp(Y_1)$ is a flat in $\cM_1$, this implies
\[
    \rk(G_1[Y_1^{\sf T},\,x_1^{\sf T}])=\rho_1(\rowsp(Y_1)+\subspace{x_1})>\rho_1(\rowsp(Y_1))=\rk (G_1 Y_1^{\sf T}).
\]
Hence $G_1x_1^{\sf T}$ is not in the column space of $G_1 Y_1^{\sf T}$ and we obtain
\begin{align*}
  \hat{\rho}(F+\subspace{x})=\rk\hspace*{-.3em}\left[\begin{pmatrix}G_1\!\!&0\\0\!\!&G_2\end{pmatrix}\!\!\!
               \begin{pmatrix}Y_1\!\!&0\\0\!\!&Y_2\\x_1\!\!&x_2\end{pmatrix}^{\hspace*{-.4em}\sf T}\,\right]\!\!=
    \rk\hspace*{-.3em}\begin{pmatrix}G_1Y_1^{\sf T}\!\!\!\!&0\!\!\!&G_1x_1^{\sf T}\\0\!\!\!\!&G_2Y_2^{\sf T}\!\!\!&G_2x_2^{\sf T}\end{pmatrix}
      > \rk\hspace*{-.3em}\begin{pmatrix}G_1Y_1^{\sf T}\!\!\!\!&0\\0\!\!\!\!&G_2Y_2^{\sf T}\end{pmatrix}=\hat{\rho}(F).
\end{align*}
Since~$x$ was arbitrary, this proves that~$F$ is a flat of~$\cN$.
\\
(b) Let $O\in\cO(\cM_1)\oplus\cO(\cM_2)$ and let $D\in\Hyp(O)$. We want to show that $\hat{\rho}(D)=\hat{\rho}(O)$.
By assumption $O=O_1\oplus O_2$ with $O_i\in\cO(\cM_i)$.
Recall the projections $\pi_i$ from $\F^{n_1+n_2}$ to $\F^{n_i}$ and set $\hat{D}=\pi_1(D)\oplus\pi_2(D)$.
Then $D\leq\hat{D}\leq O$ and since $\dim D=\dim O-1$, we have $\hat{D}=D$ or $\hat{D}=O$.
\\
(b1) If $\hat{D}=D$ we may assume $\pi_1(D)=O_1$ and $\pi_2(D)\in\Hyp(O_2)$.
Using cyclicity of~$O_2$ and \eqref{e-rho12} we arrive at
\[
   \hat{\rho}(D)=\hat{\rho}(\hat{D})=\rho_1(\pi_1(D))+\rho_2(\pi_2(D))=\rho_1(O_1)+\rho_2(O_2)=\hat{\rho}(O),
\]
which is what we wanted.
\\
(b2) Let $\hat{D}=O$. Set $\dim O=k+1$ and thus $\dim D=k$. Write $D=\rowsp(M_1\mid M_2)$, where $M_i\in\F^{k\times n_i}$. Then
\[
     \hat{D}=O=\rowsp\begin{pmatrix}M_1&0\\0&M_2\end{pmatrix}.
\]
Hence $O_i=\rowsp(M_i)$. Let $\rk M_1=k_1$. Then $\rk M_2=k-k_1+1$.
Using elementary row operations we may assume that~$(M_1\mid M_2)$ is of the form
\[
    \begin{pmatrix}m_{1}&m_{2}\\M_{21}&0\\0&M_{22}\end{pmatrix}\ \text{ for some }
    m_{i}\in\F^{n_i},\ M_{21}\in\F^{(k_1-1)\times n_1},\ M_{22}\in\F^{(k-k_1)\times n_2}.
\]
Since $\rk M_{2i}=\rk M_i-1$ it follows that $\rowsp(M_{2i})\in\Hyp(\rowsp(M_i))$.
Using the cyclicity of $\rowsp(M_i)$ we conclude that $\rk(G_iM_{2i}^{\sf T})=\rk(G_iM_i^{\sf T})=\rk(G_i[m_i^{\sf T},M_{2i}^{\sf T}])$ for $i=1,2$.
This means that $G_im_i^{\sf T}$ is in the column space of $G_iM_{2i}^{\sf T}$, and thus
\[
   \hat{\rho}(D)=\rk\hspace*{-.3em}\begin{pmatrix}G_1m_1^{\sf T}&G_1M_{21}^{\sf T}&0\\G_2m_2^{\sf T}&0&G_2M_{22}^{\sf T}\end{pmatrix}
   =\rk\hspace*{-.3em}\begin{pmatrix}G_1M_{21}^{\sf T}&0\\0&G_2M_{22}^{\sf T}\end{pmatrix}
    =\rk\hspace*{-.3em}\begin{pmatrix}G_1M_1^{\sf T}&0\\0&G_2M_2^{\sf T}\end{pmatrix}=\hat{\rho}(O).
\]
All of this shows $\hat{\rho}(D)=\hat{\rho}(O)$ for every $D\in\Hyp(O)$ and thus $O\in\cO(\cN)$.
This concludes the proof.
\end{proof}

\section{Decomposition of $q$-Matroids into Irreducible Components}\label{S-Decomp}
We introduce the notion of irreducibility for \qM{}s and show that every \qM{} can be decomposed as a
direct sum of irreducible \qM{}s, whose summands are unique up to equivalence.
Our main tool are cyclic flats, in particular \cref{T-CycFlatsDS}.
This makes our approach substantially different from classical matroid theory, where decompositions are usually based on connected components.
As to our knowledge there is no notion of connectedness for \qM{}s that may be used for decompositions into direct sums; see also \cite[Sec.~8]{CeJu21}.

Throughout, let $\cM=(E,\rho)$ be a \qM{}.
In order to simplify the discussion of irreducibility and decompositions we start with the following simple fact concerning equivalence in the
sense of~\cref{D-Equiv}. It can easily be checked with the definition in \cref{T-DirSum}.

\begin{rem}\label{R-SimpleFact}
Suppose $\cM\approx\hat{\cM}_1\oplus\hat{\cM}_2$. Then there exists a decomposition $E=E_1\oplus E_2$ and \qM{}s $\cM_i$ such that
\[
   \cM=\cM_1\oplus\cM_2,\quad \cM_i=\cM|_{E_i},\quad \cM_i\approx\hat{\cM}_i.
\]
As a consequence, we do not need to take equivalence into account when discussing decomposability into direct sums.
\end{rem}

\begin{defi}\label{D-Irred}
The \qM{}~$\cM$ is called \emph{reducible} if there exists \qM{}s $\cM_1,\,\cM_2$ with nonzero ground spaces such that
$\cM=\cM_1\oplus\cM_2$. Otherwise~$\cM$ is called \emph{irreducible}.
\end{defi}

Clearly,  a \qM{} on a 1-dimensional ground space is irreducible.
Furthermore, thanks to \cref{T-DirSumDual} $\cM$ is irreducible if and only if~$\cM^*$ is.
We collect some facts about the uniform \qM{}s.

\begin{exa}\label{E-TrivialFreeRed}
\begin{alphalist}
\item The trivial and the free \qM{}s $\cU_{0,n}$ and $\cU_{n,n}$ are irreducible if and only if $n=1$.
Indeed, \cref{T-DirSum} implies $\cU_{0,n_1}\oplus\cU_{0,n_2}=\cU_{0,n}$ and likewise
$\cU_{n_1,n_1}\oplus\cU_{n_2,n_2}=\cU_{n,n}$, where $n=n_1+n_2$.
\item For $0<k<n:=\dim E$ the uniform \qM{} $\cM:=\cU_k(E)$  is irreducible.
To see this, note first that $\cZ(\cM)=\{0,E\}$ thanks to \cref{E-UniMatr}.
Suppose $\cM=\cM_1\oplus\cM_2$ for some \qM{}s $\cM_i=(E_i,\rho_i)$.
Then the identity  $\cZ(\cM)=\cZ(\cM_1)\oplus\cZ(\cM_2)$ implies that, without loss of generality, $\cZ(\cM_1)=\{Z_1\}$ and
$\cZ(\cM_2)=\{Z_2,Z_2'\}$.
Thus $0=Z_1\oplus Z_2$ and $E=Z_1\oplus Z_2'$, and therefore $Z_1=0,\,Z_2=0,\,Z_2'=E$.
Since $E=E_1\oplus E_2$ and $Z_2'\leq E_2$, this leads to $E_2=E$ and thus $E_1=0$.
Hence $\cM_1$ has a zero-dimensional ground space and~$\cM$ is irreducible.
\item Conversely, if $\cM=(E,\rho)$ is such that $\cZ(\cM)=\{0,E\}$ and $\rho(E)=k\in\{1,\ldots,n-1\}$, then $\cM=\cU_k(E)$.
Indeed, suppose there exists a space~$V$ such that $\rho(V)<\min\{k,\dim V\}$.
Let~$l=\dim V$ be minimal subject to this condition. Then $\rho(V)=l-1$ and~$V$ is cyclic.
Thus $\cl(V)=E$ and $\rho(V)=\rho(E)=k$, contradicting the choice of~$V$.
This shows $\rho(V)=\min\{k,\dim V\}$ for all $V\in\cL(E)$.
\end{alphalist}
\end{exa}

The goal of this section is (a) to provide a criterion for irreducibility and (b) to show that every \qM{} decomposes into a direct
sum of irreducible \qM{}s, whose summands are unique up to ordering and equivalence.
The next lemma will be needed throughout.

\begin{lemma}\label{L-RestrToCF}
Let $\hat{Z}\in\cZ(\cM)$ and consider the restriction $\cM|_{\hat{Z}}$. Then
\[
    \cZ(\cM|_{\hat{Z}})=\{Z\in\cZ(\cM)\mid Z\leq \hat{Z}\}.
\]
\end{lemma}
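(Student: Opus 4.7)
The plan is to verify both inclusions by carefully separating the two defining conditions (flatness and cyclicity) and exploiting the fact that the restriction $\cM|_{\hat{Z}}$ has the same rank function as $\cM$ on subspaces of $\hat{Z}$.

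First I would observe that cyclicity transfers trivially in both directions. For $Z\leq\hat{Z}$, the statement that $Z$ is cyclic uses only the rank values $\rho(W)$ with $W\in\Hyp(Z)$, and these lie in $\cL(\hat{Z})$, so the restriction rank agrees with $\rho$ on them. By \eqref{e-cyclicopen}, $Z\in\cO(\cM|_{\hat{Z}})$ if and only if $Z\in\cO(\cM)$. So the content of the lemma lies entirely in the flatness condition.

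For the inclusion ``$\supseteq$'', suppose $Z\in\cZ(\cM)$ with $Z\leq\hat{Z}$. Since $Z\in\cF(\cM)$, for every $x\in E\setminus Z$ (and in particular for every $x\in\hat{Z}\setminus Z$) we have $\rho(Z+\langle x\rangle)>\rho(Z)$, which means $Z\in\cF(\cM|_{\hat{Z}})$. Together with the cyclicity observation above, $Z\in\cZ(\cM|_{\hat{Z}})$.

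For the reverse inclusion ``$\subseteq$'', suppose $Z\in\cZ(\cM|_{\hat{Z}})$. By the cyclicity observation, $Z\in\cO(\cM)$. It remains to show $Z\in\cF(\cM)$, i.e., $\rho(Z+\langle x\rangle)>\rho(Z)$ for every $x\in E\setminus Z$. If $x\in\hat{Z}\setminus Z$, this is immediate from $Z\in\cF(\cM|_{\hat{Z}})$. The main obstacle is the case $x\in E\setminus\hat{Z}$, which requires pushing the flatness of $Z$ past $\hat{Z}$: assume toward a contradiction that $\rho(Z+\langle x\rangle)=\rho(Z)$; since $Z\leq\hat{Z}$, \cref{P-RhoProp}(b) yields $\rho(\hat{Z}+\langle x\rangle)=\rho(\hat{Z})$, contradicting $\hat{Z}\in\cF(\cM)$ (which we get from $\hat{Z}\in\cZ(\cM)$). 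This rules out the only remaining case and gives $Z\in\cF(\cM)$, hence $Z\in\cZ(\cM)$. The key technical step is precisely this use of \cref{P-RhoProp}(b) together with the flatness of $\hat{Z}$; everything else is bookkeeping about which rank function one is applying.
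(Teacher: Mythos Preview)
Your proof is correct and follows essentially the same approach as the paper. The only cosmetic difference is in the key step for $x\in E\setminus\hat{Z}$: the paper argues via the closure operator, noting $\cl(Z)\leq\cl(\hat{Z})=\hat{Z}$ so that $x\notin\cl(Z)$, whereas you obtain the same conclusion by contradiction using \cref{P-RhoProp}(b) and the flatness of~$\hat{Z}$; these are two phrasings of the same fact.
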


\begin{proof}
Set $\cZ=\{Z\in\cZ(\cM)\mid Z\leq \hat{Z}\}$. Denote the rank function of $\cM|_{\hat{Z}}$ by~$\hat{\rho}$.
\\
``$\supseteq$'' is obvious since $\hat{\rho}(V)=\rho(V)$ for all $V\leq\hat{Z}$.
\\
``$\subseteq$'' Let $Z\in\cZ(\cM|_{\hat{Z}})$.
Clearly~$Z$ is cyclic in~$\cM$ because any $D\in\Hyp(Z)$ is a subspace of $\hat{Z}$ and thus satisfies $\rho(D)=\hat{\rho}(D)=\hat{\rho}(Z)=\rho(Z)$.
To show that $Z$ is a flat in~$\cM$, let $x\in E\setminus Z$.
If $x\in \hat{Z}\setminus Z$, then $\rho(Z+\subspace{x})=\hat{\rho}(Z+\subspace{x})>\hat{\rho}(Z)=\rho(Z)$.
If $x\in E\setminus \hat{Z}$, then $x\not\in\cl(Z)$ because $\cl(Z)\leq \cl(\hat{Z})=\hat{Z}$ (where $\cl(\,\cdot\,)$ denotes the closure in~$\cM$).
Thus $\rho(Z+\subspace{x})>\rho(Z)$.
All of this shows that~$Z$ is a flat in~$\cM$.
\end{proof}

Our first result shows that whenever $E$ is not a cyclic flat of~$\cM$, then for any direct complement~$E_2$ of $\cyc(E)$ in~$E$
we may split off the free \qM{} on~$E_2$ from~$\cM$.

\begin{prop}\label{P-FreeFactor}
Let $E_1=\cyc(E)$ and choose $E_2\leq E$ such that $E_1\oplus E_2=E$.
Consider the restrictions $\cM_i=\cM|_{E_i}$ for $i=1,2$. Then
\begin{alphalist}
\item $\cM_2$ is the free \qM{} on~$E_2$.
\item $\cZ(\cM)=\cZ(\cM_1)$.
\item $\cM=\cM_1\oplus\cM_2$.
\end{alphalist}
\end{prop}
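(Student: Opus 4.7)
The plan is to prove (a)--(c) in order, deriving (c) from (a) and (b) by matching the rank-function formula of \cref{C-RhoDSZ} to that of \cref{C-RhoVFormula}.

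For (a), the key observation is that by \cref{T-CycCoreOpen} the space $E_1=\cyc(E)$ is the sum of all circuits of~$\cM$, hence contains every circuit. Any subspace $V\le E_2$ therefore meets each circuit trivially (since $E_1\cap E_2=0$), so $V$ contains no circuit and is independent in~$\cM$. Consequently $\rho_{\cM_2}(V)=\rho(V)=\dim V$ for all $V\le E_2$, which means $\cM_2$ is the free \qM{} on~$E_2$.

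For (b), note first that $E_1=\cyc(E)$ is the greatest element of $\cZ(\cM)$ (see the remarks following \cref{C-CycFlats}), so \cref{L-RestrToCF} applies with $\hat{Z}=E_1$ and yields $\cZ(\cM_1)=\{Z\in\cZ(\cM)\mid Z\le E_1\}$. On the other hand, any $Z\in\cZ(\cM)$ is cyclic and therefore contained in the largest open space, which by \cref{T-CycCoreOpen} is~$E_1$. Thus every $Z\in\cZ(\cM)$ lies in $\cZ(\cM_1)$, and the reverse inclusion is built into the above description. Hence $\cZ(\cM)=\cZ(\cM_1)$.

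For (c), which I expect to be the main step, combine (a) and (b) with the two rank formulas. By (a) and \cref{E-UniMatr}, $\cZ(\cM_2)=\{0\}$, so after identifying $E=E_1\oplus E_2$,
\[
\cZ(\cM_1)\oplus\cZ(\cM_2)=\{Z\oplus 0\mid Z\in\cZ(\cM_1)\}=\cZ(\cM_1)=\cZ(\cM),
\]
where the last equality uses~(b). Moreover, for each $Z\le E_1$, \cref{T-DirSumProp}(a) gives $\rho_{\cM_1\oplus\cM_2}(Z)=\rho_1(Z)+\rho_2(0)=\rho(Z)$. Substituting into \cref{C-RhoDSZ} yields, for every $V\in\cL(E)$,
\[
\rho_{\cM_1\oplus\cM_2}(V)=\min_{Z\in\cZ(\cM)}\big(\rho(Z)+\dim(V+Z)/Z\big)=\rho(V),
\]
the last equality being exactly \cref{C-RhoVFormula} applied to~$\cM$. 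This gives $\cM=\cM_1\oplus\cM_2$.

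The main obstacle is essentially the bookkeeping in (c): one must verify that $E_1\in\cZ(\cM)$ so that \cref{L-RestrToCF} is applicable, and that the index set and rank values appearing in \cref{C-RhoDSZ} for $\cM_1\oplus\cM_2$ coincide with those appearing in \cref{C-RhoVFormula} for~$\cM$. Once these two identifications are made, the two minimization formulas become literally identical and the proof is immediate.
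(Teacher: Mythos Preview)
Your proof is correct and follows essentially the same route as the paper: part~(b) is identical, and part~(c) matches the rank formulas via the cyclic flats exactly as the paper does (you invoke \cref{C-RhoDSZ} directly while the paper routes through \cref{T-CycFlatsDS}, but these amount to the same computation). The only cosmetic difference is in~(a), where the paper cites \cref{P-VcycVFormula}(b) to conclude that~$E_2$ is independent, whereas you argue via \cref{T-CycCoreOpen} that~$E_2$ contains no circuit; both are equally short and valid.
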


\begin{proof}
(a)  \cref{P-VcycVFormula}(b) implies that~$E_2$ is independent in~$\cM$ and thus in~$\cM_2$.
Since every subspace of an independent space is independent, the result follows.
\\
(b) follows from \cref{L-RestrToCF} together with the fact that $\cZ(\cM)$ is a lattice with greatest element $\cyc(E)$.
\\
(c) Part (a) and \cref{E-UniMatr} tell us that $\cZ(\cM_2)=\{0\}$, and thus (b) and \cref{T-CycFlatsDS} imply
\[
   \cZ(\cM_1\oplus\cM_2)=\cZ(\cM_1)=\cZ(\cM).
\]
Denote the rank function of $\cM_1\oplus\cM_2$ by $\hat{\rho}$.
Then \cref{T-DirSumProp}(a) implies $\hat{\rho}(Z_1)=\rho_1(Z_1)=\rho(Z_1)$ for all $Z_1\in\cZ(\cM_1)$.
Hence the cyclic flats in $\cZ(\cM)$ have the same rank value in the \qM{}s  $\cM_1\oplus\cM_2$ and $\cM$.
The result follows from \cref{C-RhoVFormula}.
\end{proof}

Dually, we may split off the trivial \qM{} on $\cl(0)$ from~$\cM$.

\begin{prop}\label{P-TrivialFactor}
Let $E_1=\cl(0)$ and choose $E_2\leq E$ such that $E=E_1\oplus E_2$.
Consider the restrictions $\cM_i=\cM|_{E_i}=(E_i,\rho_i)$ for $i=1,2$ and let $\pi_i:E\longrightarrow E_i$ be the projections.
\begin{alphalist}
\item $\cM_1$ is the trivial \qM{} on $E_1$.
\item $\rho(V)=\rho_2(\pi_2(V))$ for all $V\in\cL(E)$.
\item $\cM=\cM_1\oplus\cM_2$.
\end{alphalist}
We call $\cl(0)$ the \emph{loop space} of~$\cM$. It consists of all vectors~$x\in E$ such that $\rho(\subspace{x})=0$.
\end{prop}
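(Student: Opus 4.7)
The plan is to exploit that $E_1 = \cl(0)$ behaves as a space of ``loops'' that can be freely absorbed without changing the rank, so that $\cM$ is determined by its action on a complement of $E_1$.

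For (a), I would simply note that $\rho(E_1) = \rho(\cl(0)) = \rho(0) = 0$ by \eqref{e-clrho}. Monotonicity (R2) then forces $\rho_1(V) = \rho(V) = 0$ for every $V \leq E_1$, so $\cM_1$ is the trivial $q$-matroid on $E_1$.

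For (b), the technical heart of the argument, the idea is loop-absorption. Every $x \in E_1$ satisfies $\rho(\langle x\rangle) \leq \rho(E_1) = 0$, so $\rho(0 + \langle x\rangle) = \rho(0)$. Invoking \cref{P-RhoProp}(b) with $W = 0$ yields $\rho(V + \langle x\rangle) = \rho(V)$ for every $V \in \cL(E)$ and every $x \in E_1$, and \cref{P-RhoProp}(a) applied with that set of $x$'s then gives $\rho(V + E_1) = \rho(V)$. A short check using the decomposition $E = E_1 \oplus E_2$ shows $V + E_1 = \pi_2(V) + E_1$, after which applying the same loop identity to $\pi_2(V)$ yields
\[
  \rho(V) = \rho(V + E_1) = \rho(\pi_2(V) + E_1) = \rho(\pi_2(V)) = \rho_2(\pi_2(V)),
\]
where the last equality uses $\pi_2(V) \leq E_2$ and the definition of the restriction.

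For (c), let $\hat{\rho}$ denote the rank function of $\cM_1 \oplus \cM_2$. Since $\rho$ restricts to $\rho_i$ on $\cL(E_i)$, the coproduct property of \cref{T-DirSumCoPr} gives $\hat{\rho}(V) \geq \rho(V)$ for all $V \in \cL(E)$. For the reverse inequality, observe $V \leq \pi_1(V) \oplus \pi_2(V)$; monotonicity of $\hat{\rho}$ together with \cref{T-DirSumProp}(a) and $\rho_1 \equiv 0$ then yields
\[
  \hat{\rho}(V) \leq \hat{\rho}(\pi_1(V) \oplus \pi_2(V)) = \rho_1(\pi_1(V)) + \rho_2(\pi_2(V)) = \rho_2(\pi_2(V)) = \rho(V),
\]
the last equality by (b). Hence $\hat{\rho} = \rho$, establishing $\cM = \cM_1 \oplus \cM_2$. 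The concluding identification of $\cl(0)$ with $\{x \in E \mid \rho(\langle x\rangle) = 0\}$ is immediate from the definition of the closure operator combined with $\rho(\cl(0)) = 0$ and monotonicity. No individual step looks subtle; the only point requiring some care is chaining \cref{P-RhoProp}(a) and (b) correctly to establish loop-absorption in (b).
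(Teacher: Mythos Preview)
Your proof is correct. Parts~(a) and~(b) follow essentially the same loop-absorption strategy as the paper, though your argument for~(b) is slightly slicker: you establish $\rho(V+E_1)=\rho(V)$ once via \cref{P-RhoProp}(a),(b) and then use $V+E_1=\pi_2(V)+E_1$, whereas the paper proves the two inequalities $\rho(V)\leq\rho_2(\pi_2(V))$ (via submodularity and $\rho(\pi_1(V))=0$) and $\rho_2(\pi_2(V))\leq\rho(V)$ (via \cref{P-RhoProp}(a) applied to $\pi_2(V)$) separately. For~(c) the approaches differ more substantially. The paper computes $\hat{\rho}$ directly from the defining formula~\eqref{e-rho}: since $\rho_1\equiv0$, part~(b) gives $\rho_1'(X)+\rho_2'(X)=\rho_2(\pi_2(X))=\rho(X)$, and then $\min_{X\leq V}(\rho(X)-\dim X)$ is attained at $X=V$ by monotonicity of the nullity function, yielding $\hat{\rho}(V)=\rho(V)$ in one line. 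Your route instead invokes the coproduct property (\cref{T-DirSumCoPr}) for the inequality $\hat{\rho}\geq\rho$ and combines monotonicity with \cref{T-DirSumProp}(a) for the reverse. Both arguments are clean; the paper's is more self-contained (it uses only the definition of the direct sum), while yours emphasizes the structural r\^ole of $\cM_1\oplus\cM_2$ as a coproduct.
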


\begin{proof}
(a) is clear and so is the very last part about the vectors in the loop space.
\\
(b) Let $V\in\cL(E)$.
Since $V\leq\pi_1(V)\oplus\pi_2(V)$ we have $\rho(V)\leq \rho(\pi_1(V))+\rho(\pi_2(V))=\rho(\pi_2(V))=\rho_2(\pi_2(V))$.
For the converse inequality, let $y\in\pi_2(V)$. Then there exists $x\in E_1$ such that $x+y\in V$.
As a consequence, $V+\subspace{y}=V+\subspace{x}$ and $\rho(V)\leq\rho(V+\subspace{x})\leq\rho(V)+\rho(\subspace{x})=\rho(V)$.
Hence we have equality across and this shows that $\rho(V+\subspace{y})=\rho(V)$ for all $y\in\pi_2(V)$.
Now \cref{P-RhoProp}(a) leads to $\rho(V+\pi_2(V))=\rho(V)$ and thus $\rho(\pi_2(V))\leq\rho(V)$.
\\
(c) Let $\hat{\rho}$ be the rank function of $\cM_1\oplus\cM_2$.
Then
\begin{align*}
  \hat{\rho}(V)&=\dim V+\min_{X\leq V}\big(\rho_2(\pi_2(X))-\dim X\big)
     =\dim V+\min_{X\leq V}\big(\rho(X)-\dim X\big)\\
     &=\dim V+\rho(V)-\dim V=\rho(V),
\end{align*}
where the third step follows from the inequality $\dim A-\rho(A)\leq \dim B-\rho(B)$ for any $A\leq B$ (which is a simple consequence of submodularity).
\end{proof}

The following terminology will be convenient.

\begin{defi}\label{D-Full}
$\cM$ is called \emph{full} if $\cl(0)=0$ and $\cyc(E)=E$.
\end{defi}

\begin{rem}\label{R-Coloop}
The notion of a full matroid does not exist in classical matroid theory (as to our knowledge) because a matroid satisfying $\cl(0)=0$ and $\cyc(E)=E$
is simply called loopless and coloopless.
In the $q$-analogue, however, the dual \qM{} depends on the choice of a NSBF (see \cref{T-DualqM}) and therefore the notion of a coloop is not well-defined.
Indeed, a 1-dimensional subspace $\subspace{x}$ would be called a coloop of~$\cM$ if it is a loop of~$\cM^*$, that is, if
$\subspace{x}\leq\cl^*(0)$.
By \cref{C-cyccl} $\cl^*(0)=\cyc(E)^\perp$ and thus $\cl^*(0)\neq0\Longleftrightarrow\cyc(E)\neq E$.
But if $\cyc(E)\neq E$ one can find for any nonzero vector $x\in E$ a NSBF such that $x\in\cyc(E)^\perp$, hence $\subspace{x}$ is a loop in the
dual \qM{} with respect to this NSBF.
For this reason we will not use the notion of coloops.
\end{rem}

Now we can present a first step toward a decomposition of~$\cM$.

\begin{theo}\label{T-SplitOff}
Given $\cM=(E,\rho)$. Set $l=\dim\cl(0)$ and $f=\dim E-\dim\cyc(E)$.
\begin{alphalist}
\item $\cM$ is the direct sum of a trivial, a free, and a full \qM.
Precisely,  there exists a full \qM{} $\cM'$ such that
\[
   \cM\approx\cU_{0,l}\oplus\cU_{f,f}\oplus\cM'.
\]
\item If $\cM\approx\cU_{0,a}\oplus\cU_{b,b}\oplus\cN$, where~$\cN$ is full, then $a=l,\,b=f$ and $\cN\approx\cM'$.
\end{alphalist}
We call $\cU_{0,l},\,\cU_{f,f}$, and~$\cM'$ the \emph{trivial, free and full component of}~$\cM$, respectively.
\end{theo}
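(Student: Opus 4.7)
My plan for part~(a) is a two-stage decomposition. First, I would apply \cref{P-TrivialFactor} with $E_1 := \cl(0)$ and any chosen complement $E'\leq E$, yielding $\cM = \cU_0(E_1) \oplus \cM|_{E'}$; the restriction $\cM|_{E'}$ is loop-free because any $x \in E'$ with $\rho(\langle x \rangle) = 0$ must lie in $E' \cap \cl(0) = 0$. Second, I would apply \cref{P-FreeFactor} to $\cM|_{E'}$ by setting $\hat E_1 := \cyc_{\cM|_{E'}}(E')$ and picking a complement $\hat E_2$ of $\hat E_1$ in $E'$; this gives $\cM|_{E'} = \cM|_{\hat E_1} \oplus \cM|_{\hat E_2}$ with $\cM|_{\hat E_2}$ free. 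Associativity (\cref{C-DSAssoc}) then delivers the decomposition with $\cM' := \cM|_{\hat E_1}$. To finish (a) I must verify $\dim \hat E_2 = f$ and that $\cM'$ is full. For the dimension, \cref{T-CycFlatsDS} applied to $\cM = \cU_0(E_1) \oplus \cM|_{E'}$ combined with \cref{E-UniMatr} gives $\cyc_\cM(E) = E_1 \oplus \hat E_1$, hence $\dim \hat E_2 = f$. Loop-freeness of $\cM'$ is inherited from $\cM|_{E'}$, and applying \cref{T-CycFlatsDS} to $\cM|_{E'} = \cM' \oplus \cU_f(\hat E_2)$ shows that the top cyclic flat $\hat E_1$ of $\cM|_{E'}$ factors as $\cyc_{\cM'}(\hat E_1) \oplus 0$, forcing $\cyc_{\cM'}(\hat E_1) = \hat E_1$.

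For part~(b), suppose $\cM \approx \cU_{0,a} \oplus \cU_{b,b} \oplus \cN$ with $\cN$ full. \cref{R-SimpleFact} applied twice together with associativity lets me realize this as an internal decomposition $\cM = \cU_0(E_a) \oplus \cU_b(E_b) \oplus \cN'$ on $E = E_a \oplus E_b \oplus E_N$ with $\cN' = \cM|_{E_N} \approx \cN$. \cref{T-CycFlatsDS} and \cref{E-UniMatr} then yield $\cZ(\cM) = \{E_a \oplus Z \mid Z \in \cZ(\cN')\}$. Fullness of $\cN'$ ensures that $0$ and $E_N$ both lie in $\cZ(\cN')$, so the least and greatest cyclic flats of $\cM$ are $E_a = \cl(0)$ and $E_a \oplus E_N = \cyc(E)$, forcing $a = l$ and $b = f$.

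It remains to show $\cN \approx \cM'$. The idea is to exhibit both $\cN'$ and $\cM'$ as equivalent to the intrinsic $q$-matroid $(\cM|_{\cyc(E)})/\cl(0)$. Regrouping $\cM$ as $(\cU_0(E_a) \oplus \cN') \oplus \cU_b(E_b)$ via associativity and using \cref{T-DirSumProp}(a) gives $\cM|_{E_a \oplus E_N} \approx \cU_0(E_a) \oplus \cN'$, and \cref{T-DirSumProp}(c) then yields $(\cM|_{\cyc(E)})/\cl(0) \approx \cN'$. The identical argument applied to the decomposition of part~(a) (using $\cyc(E) = E_1 \oplus \hat E_1$) produces $(\cM|_{\cyc(E)})/\cl(0) \approx \cM'$, whence $\cN \approx \cN' \approx \cM'$. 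I expect the main obstacle to be bookkeeping rather than anything deep: carefully invoking \cref{T-DirSumProp} to convert the ternary decomposition into iterated binary ones, and keeping the induced restrictions and contractions consistent with the intrinsic subspaces $\cl(0)$ and $\cyc(E)$ that govern uniqueness.
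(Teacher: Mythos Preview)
Your proposal is correct and follows essentially the same approach as the paper. The only difference is cosmetic: in part~(a) you split off the trivial summand first and then the free summand, whereas the paper does it in the opposite order (first applying \cref{P-FreeFactor} with $E=\cyc(E)\oplus\Delta$, then \cref{P-TrivialFactor} with $\cyc(E)=\cl(0)\oplus\Gamma$); your part~(b) matches the paper's argument exactly, including the identification of both $\cN$ and $\cM'$ with $(\cM|_{\cyc(E)})/\cl(0)$.
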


\begin{proof}
Let $E=\cyc(E)\oplus\Delta$ and $\cyc(E)=\cl(0)\oplus \Gamma$. Then $\dim\Delta=f$.
\\
(a) By \cref{P-FreeFactor} we have $\cM=\cU_{f}(\Delta)\oplus\cM|_{\cyc(E)}$, and
\cref{P-TrivialFactor} implies $\cM|_{\cyc(E)}=\cU_{0}(\cl(0))\oplus\cM|_{\Gamma}$.
This proves the stated decomposition of~$\cM$, and it remains to show that $\cM':=\cM|_{\Gamma}$ is full.
Again Propositions~\ref{P-FreeFactor} and \ref{P-TrivialFactor}
give us $\cZ(\cM)=\cZ(\cM|_{\cyc(E)})=\{\cl(0)\oplus Z\mid Z\in\cZ(\cM')\}$.
Since $\cl(0)$ and $\cyc(E)=\cl(0)\oplus\Gamma$ are the least and greatest element of the lattice $\cZ(\cM)$, we conclude that~$0$
and~$\Gamma$ are the least and greatest element of the lattice $\cZ(\cM')$.
Thus $\cM'$ is full.
\\
(b) Using \cref{R-SimpleFact} we have
\[
    \cM=\cU_0(A)\oplus\cU_b(B)\oplus\cN
\]
for some  $A,B,N\leq E$ such that $A\oplus B\oplus N=E,\, a=\dim A,\,b=\dim B$, and where~$N$ is the ground space of~$\cN$.
Moreover, by~(a)
\[
      \cM=\cU_{0}(\cl(0))\oplus\cU_f(\Delta)\oplus\cM'.
\]
From \cref{T-CycFlatsDS} and \cref{E-UniMatr} we obtain
$\cZ(\cM)=\{A\}\oplus\cZ(\cN)$.
Since~$\cN$ is full, the least element of the lattice $\cZ(\cN)$ is~$0$ and thus~$A$ is the least element of~$\cZ(\cM)$.
But the latter is $\cl(0)$ and thus we arrive at $A=\cl(0)$ and $a=l$.
In the same way, the greatest element of $\cZ(\cN)$ is~$N$ and thus $A\oplus N$ is the greatest element of~$\cZ(\cM)$.
Hence $A\oplus N=\cyc(E)$ and $a+\dim N=\dim\cyc(E)$, which implies $f=\dim E-\dim\cyc(E)=b$, as desired.
In order to show that $\cN\approx\cM'$ note that $\cyc(E)=A\oplus\Gamma=A\oplus N$ and
$\cM|_{\cyc(E)}=\cU_0(A)\oplus\cM'=\cU_0(A)\oplus\cN$ (see \cref{T-DirSumProp}(a)).
Now \cref{T-DirSumProp}(c) yields $\cN\approx(\cM|_{\cyc(E)})/A\approx\cM'$.
This concludes the proof.
\end{proof}

We have the following special case.

\begin{cor}\label{C-SingleCF}
$|\cZ(\cM)|=1$ if and only if~$\cM$ is the direct sum of a trivial and a free \qM.
Precisely, let  $l=\dim\cl(0)$ and $f=\dim E-l$.
Then $\cZ(\cM)=\{\cl(0)\}\Longleftrightarrow\cM\approx\cU_{0,l}\oplus\cU_{f,f}$.
\end{cor}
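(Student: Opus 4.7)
The plan is to deduce the corollary directly from \cref{T-SplitOff} and \cref{T-CycFlatsDS}, using \cref{E-UniMatr} to compute the cyclic flats of uniform \qM{}s. The backward direction is the easier one. If $\cM\approx\cU_{0,l}\oplus\cU_{f,f}$ with ground spaces $E_1$ (of dimension $l$) and $E_2$ (of dimension $f$), then \cref{E-UniMatr} gives $\cZ(\cU_{0,l})=\{E_1\}$ and $\cZ(\cU_{f,f})=\{0\}$, so \cref{T-CycFlatsDS} yields $\cZ(\cM)=\{E_1\oplus 0\}$, which is a singleton. This also identifies the unique cyclic flat with $\cl(0)$.

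For the forward direction, suppose $|\cZ(\cM)|=1$. As observed right after \cref{C-CycFlats}, the lattice $\cZ(\cM)$ has least element $\cl(0)$ and greatest element $\cyc(E)$. Since the lattice has only one element, we must have $\cl(0)=\cyc(E)$, and in particular $l+f=\dim\cl(0)+(\dim E-\dim\cyc(E))=\dim E$. Now \cref{T-SplitOff}(a) provides a decomposition
\[
    \cM\approx\cU_{0,l}\oplus\cU_{f,f}\oplus\cM',
\]
where $\cM'$ is a full \qM{}. Examining the proof of \cref{T-SplitOff}, the ground space of $\cM'$ has dimension $\dim\cyc(E)-\dim\cl(0)=0$, so $\cM'$ is the (unique) \qM{} on the zero space and acts as the identity for the direct sum. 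This gives $\cM\approx\cU_{0,l}\oplus\cU_{f,f}$.

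There is no real obstacle here, since the heavy lifting has been done: \cref{T-SplitOff} already extracts the trivial and free components by splitting off complements of $\cl(0)$ and $\cyc(E)$, and \cref{T-CycFlatsDS} reduces the computation of $\cZ(\cM)$ under direct sums to the componentwise computation. The corollary is simply the degenerate case in which the full component has no structure at all, forced by the collapse of the cyclic-flat lattice to a single point.
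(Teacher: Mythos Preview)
Your proof is correct and follows essentially the same route as the paper's: the backward direction via \cref{T-CycFlatsDS} and \cref{E-UniMatr}, and the forward direction by observing that $|\cZ(\cM)|=1$ forces $\cl(0)=\cyc(E)$ so that \cref{T-SplitOff} leaves no room for a nontrivial full component. The paper's argument is terser but identical in substance.
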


\begin{proof}
The backward direction follows immediately from \cref{T-CycFlatsDS} together with $|\cZ(\cU_{0,l})|=|\cZ(\cU_{f,f})|=1$ for any $l,f$.
The forward direction is a consequence of \cref{T-SplitOff} because the assumption implies $\cl(0)=\cyc(E)$.
\end{proof}

In order to derive a criterion for irreducibility we need the following two lemmas.
For any subspace $V\in\cL(E)$ we use $\cB(V)$ for the
collection of bases of~$V$, i.e., $\cB(V)=\{I\leq V\mid \dim I=\rho(I)=\rho(V)\}$.

\begin{lemma}\label{L-SumOfFlats}
Suppose there exist flats $F_1,\,F_2$ of~$\cM$ such that $F_1\cap F_2=0$ and $\rho(F_1\oplus F_2)=\rho(F_1)+\rho(F_2)$.
\begin{alphalist}
\item Let $B_i\in\cB(F_i)$. Then  $B_1\oplus B_2\in\cB(F_1\oplus F_2)$.
\item Let $V_i\in\cL(F_i)$. Then $\rho(V_1\oplus V_2)=\rho(V_1)+\rho(V_2)$.
\end{alphalist}
\end{lemma}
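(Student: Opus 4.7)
The plan is to establish (a) first via a short closure-based computation and then deduce (b) by extending bases of $V_i$ to bases of $F_i$.

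For (a), I would start with the observation that $B_1 \cap B_2 \leq F_1 \cap F_2 = 0$, so $B_1 + B_2 = B_1 \oplus B_2$ with $\dim(B_1 \oplus B_2) = \rho(F_1) + \rho(F_2) = \rho(F_1 \oplus F_2)$. Therefore it suffices to prove $\rho(B_1 \oplus B_2) = \rho(F_1 \oplus F_2)$, which simultaneously yields independence and the matching rank, hence the basis property. Since $B_i$ is a basis of $F_i$, every $x \in F_i$ satisfies $\rho(B_i + \subspace{x}) = \rho(B_i)$, which by definition of $\cl$ gives $F_i \leq \cl(B_i)$. Monotonicity of the closure (\cref{T-Basics}(e)) then yields $\cl(B_i) \leq \cl(B_1 \oplus B_2)$, so $F_1 + F_2 \leq \cl(B_1 \oplus B_2)$. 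Combining this with \eqref{e-clrho} and monotonicity of $\rho$ produces $\rho(F_1 \oplus F_2) \leq \rho(B_1 \oplus B_2)$, while submodularity together with $B_1 \cap B_2 = 0$ gives the reverse inequality $\rho(B_1 \oplus B_2) \leq \rho(B_1) + \rho(B_2) = \rho(F_1) + \rho(F_2)$. This establishes the desired equality.

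For (b), I would choose bases $I_i \in \cB(V_i)$ and extend each to a basis $B_i \in \cB(F_i)$ using the standard basis-extension property of $q$-matroids (any independent space whose rank is strictly below $\rho(F_i)$ admits an augmentation by a vector from $F_i$). By part (a), $B_1 \oplus B_2$ is independent, and hence so is its subspace $I_1 \oplus I_2$. Monotonicity then yields $\rho(V_1 \oplus V_2) \geq \rho(I_1 \oplus I_2) = \dim(I_1 \oplus I_2) = \rho(V_1) + \rho(V_2)$, and submodularity with $V_1 \cap V_2 = 0$ gives the opposite inequality.

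The only place that requires any care is the basis-extension used in (b), which is standard in the $q$-matroid literature but has not been singled out in the excerpt; beyond invoking it, the closure-plus-submodularity chain in (a) is the main content and presents no serious obstacle.
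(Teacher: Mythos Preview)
Your proof is correct and follows essentially the same approach as the paper: the closure argument in~(a) to get $F_1\oplus F_2\leq\cl(B_1\oplus B_2)$ and hence $\rho(B_1\oplus B_2)=\rho(F_1\oplus F_2)$, and the basis-extension argument in~(b) (which the paper justifies by citing \cite[Thm.~37]{JuPe18}) are exactly what the paper does. The only cosmetic difference is that the paper observes $\cl(B_i)=F_i$ (rather than just $F_i\leq\cl(B_i)$) and uses the sandwich $B_1\oplus B_2\leq F_1\oplus F_2\leq\cl(B_1\oplus B_2)$ directly instead of invoking submodularity for the reverse inequality in~(a).
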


\begin{proof}
(a) $B_i\leq F_i=\cl(F_i)$ together with $\rho(B_i)=\rho(F_i)$ implies $\cl(B_i)=F_i$.
Since $\cl(B_i)\leq\cl(B_1\oplus B_2)$, this leads to
$B_1\oplus B_2\leq F_1\oplus F_2=\cl(B_1)\oplus\cl(B_2)\leq\cl(B_1\oplus B_2)$,
and thus $\rho(B_1\oplus B_2)=\rho(F_1\oplus F_2)$ thanks to \eqref{e-clrho}.
Now we have $\rho(B_1\oplus B_2)=\rho(F_1)+\rho(F_2)=\dim B_1+\dim B_2=\dim(B_1\oplus B_2)$, which shows that
$B_1\oplus B_2$ is a basis of $F_1\oplus F_2$.
\\
(b) Let $B_i\in\cB(V_i)$.
Then~$B_i$ is an independent space in~$F_i$ and thus contained in a basis of~$F_i$, say $B'_i$ (see \cite[Thm.~37]{JuPe18}).
Thanks to part~(a) $B'_1\oplus B'_2$ is independent and hence so is $B_1\oplus B_2$.
Putting everything together, we obtain
\[
  \rho(V_1\oplus V_2)\leq \rho(V_1)+\rho(V_2)=\dim B_1+\dim B_2=\dim(B_1\oplus B_2)=\rho(B_1\oplus B_2)\leq\rho(V_1\oplus V_2).
\]
which proves the stated identity.
\end{proof}

\begin{lemma}\label{L-FactorFull}
Let~$\cM$ be full. Suppose $\cM=\cM_1\oplus\cM_2$ for some \qM{}s $\cM_i=(E_i,\rho_i)$.
Then $E_i\in\cZ(\cM)$ and $\cM_1,\,\cM_2$ are full.
\end{lemma}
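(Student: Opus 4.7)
The plan is to invoke \cref{T-CycFlatsDS}, which identifies $\cZ(\cM)$ with $\cZ(\cM_1)\oplus\cZ(\cM_2)$, and then to track the least and greatest elements of this lattice through the decomposition.

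First I would recall that because $\cM$ is full, the least element of $\cZ(\cM)$ is $\cl(0)=0$ and the greatest element is $\cyc(E)=E=E_1\oplus E_2$. Applying \cref{T-CycFlatsDS}, any cyclic flat of~$\cM$ has the form $Z_1\oplus Z_2$ with $Z_i\in\cZ(\cM_i)$; writing the least element of $\cZ(\cM)$ in this way forces the least element of each $\cZ(\cM_i)$ to be $0$, i.e.\ $\cl_i(0)=0$ in~$\cM_i$. Similarly, decomposing the greatest element $E_1\oplus E_2$ of $\cZ(\cM)$ forces the greatest element of each $\cZ(\cM_i)$ to be $E_i$, i.e.\ $\cyc_i(E_i)=E_i$ in~$\cM_i$. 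Combining these gives that both $\cM_1$ and $\cM_2$ are full.

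For the statement that $E_i\in\cZ(\cM)$, I would identify $E_1$ with $\iota_1(E_1)=E_1\oplus 0\leq E$ (and similarly for $E_2$). From the previous step, $E_1\in\cZ(\cM_1)$ and $0\in\cZ(\cM_2)$, so $E_1\oplus 0\in\cZ(\cM_1)\oplus\cZ(\cM_2)=\cZ(\cM)$ by \cref{T-CycFlatsDS}. The same argument shows $E_2\in\cZ(\cM)$.

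Since all of the work is pushed onto \cref{T-CycFlatsDS} and the characterization of the least and greatest elements of $\cZ(\cM)$ as $\cl(0)$ and $\cyc(E)$ (noted right after \cref{C-CycFlats}), there is no serious obstacle here; the proof is essentially a two-line consequence of the main theorem of the preceding section.
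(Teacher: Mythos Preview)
Your proposal is correct and is essentially the same argument as the paper's proof: both invoke \cref{T-CycFlatsDS} to write $\cZ(\cM)=\cZ(\cM_1)\oplus\cZ(\cM_2)$, decompose the extreme cyclic flats $0$ and $E$ (available since $\cM$ is full) to force $0\in\cZ(\cM_i)$ and $E_i\in\cZ(\cM_i)$, and then read off $E_i=E_i\oplus 0\in\cZ(\cM)$. The only cosmetic difference is that you phrase the conclusion in terms of least and greatest elements of the lattices $\cZ(\cM_i)$, while the paper simply notes $Z_i\leq E_i$ together with $E=Z_1\oplus Z_2=E_1\oplus E_2$ forces $Z_i=E_i$ (and analogously for~$0$).
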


\begin{proof}
Since $\cM$ is full,~$E$ is in $\cZ(\cM)=\cZ(\cM_1)\oplus\cZ(\cM_2)$.
Thus there exist $Z_i\in\cZ(\cM_i)$ such that $E=Z_1\oplus Z_2$. But then $E=E_1\oplus E_2$ together with $Z_i\leq E_i$
implies $Z_i=E_i$,
and we conclude that~$E_i\in\cZ(\cM_i)$. In the same way, $0\in\cZ(\cM)$ implies $0\in\cZ(\cM_i)$ for $i=1,2$,
and hence $\cM_1$ and $\cM_2$ are full.
Finally, $E_1=E_1\oplus0$ is in $\cZ(\cM_1)\oplus\cZ(\cM_2)=\cZ(\cM)$ and similarly for $E_2$.
\end{proof}

Now we are ready for our first main result of this section, a characterization of irreducibility.

\begin{theo}\label{T-Irred}
Let $\cM=(E,\rho)$ and $\dim E\geq 2$. The following are equivalent.
\begin{romanlist}
\item $\cM$ is irreducible.
\item $\cM$ is full and there exist no nonzero spaces $Z_1,Z_2\in\cZ(\cM)$ such that
       \begin{equation}\label{e-CFlatCond}
             Z_1\oplus Z_2=E,\quad \rho(Z_1)+\rho(Z_2)=\rho(E),\quad  \text{ and }\ \cZ_1\oplus\cZ_2=\cZ(\cM),
       \end{equation}
       where $\cZ_i=\{Z\in\cZ(\cM)\mid Z\leq Z_i\}$.
\end{romanlist}
\end{theo}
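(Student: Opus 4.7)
The plan is to prove both implications separately, leaning on the fact that the cyclic flats together with their ranks recover~$\cM$ (\cref{T-ZDeterminesM}, \cref{C-RhoVFormula}). For (i)$\Rightarrow$(ii), I would first observe that an irreducible \qM{} with $\dim E\geq 2$ is automatically full: by \cref{T-SplitOff} we have $\cM\approx\cU_{0,l}\oplus\cU_{f,f}\oplus\cM'$ with $\cM'$ full, and irreducibility forces at most one of the three summands to be nonzero. Since $\cU_{0,n}$ and $\cU_{n,n}$ are themselves reducible whenever $n\geq 2$ (\cref{E-TrivialFreeRed}(a)), the only remaining possibility is $l=f=0$, so $\cl(0)=0$ and $\cyc(E)=E$.

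The rest of (i)$\Rightarrow$(ii) I would prove contrapositively: assume nonzero $Z_1,Z_2\in\cZ(\cM)$ satisfy \eqref{e-CFlatCond}, and show that $\cM$ coincides with $\cN:=\cM|_{Z_1}\oplus\cM|_{Z_2}$. Their ground spaces agree because $Z_1\oplus Z_2=E$; by \cref{L-RestrToCF} we have $\cZ(\cM|_{Z_i})=\cZ_i$, and then \cref{T-CycFlatsDS} gives $\cZ(\cN)=\cZ_1\oplus\cZ_2=\cZ(\cM)$. The key step is that the rank values on these common cyclic flats coincide: for $Z_1'\oplus Z_2'$ with $Z_i'\in\cZ_i$, the rank in~$\cN$ equals $\rho(Z_1')+\rho(Z_2')$ by \cref{T-DirSumProp}(a), while in~$\cM$ one can invoke \cref{L-SumOfFlats}(b) applied to the flats $F_1=Z_1,\,F_2=Z_2$, whose hypotheses ($Z_1\cap Z_2=0$ and $\rho(Z_1)+\rho(Z_2)=\rho(Z_1\oplus Z_2)=\rho(E)$) are guaranteed by the first two parts of \eqref{e-CFlatCond}. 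This yields $\rho(Z_1'\oplus Z_2')=\rho(Z_1')+\rho(Z_2')$ inside~$\cM$ as well, so \cref{C-RhoVFormula} forces $\cN=\cM$, contradicting irreducibility.

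For (ii)$\Rightarrow$(i), I would proceed by contradiction: suppose $\cM=\cM_1\oplus\cM_2$ with nonzero ground spaces $E_1,E_2$, and set $Z_i:=E_i$. Since~$\cM$ is full, \cref{L-FactorFull} guarantees $Z_i\in\cZ(\cM)$. The first requirement in \eqref{e-CFlatCond} is immediate, the second is \cref{T-DirSumProp}(a)--(b), and the third follows from \cref{T-CycFlatsDS} together with the identification $\cZ_i=\cZ(\cM|_{E_i})\cong\cZ(\cM_i)$ coming from \cref{L-RestrToCF} and \cref{T-DirSumProp}(a). This contradicts the hypothesis in (ii), so~$\cM$ must be irreducible. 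The only technically delicate point of the whole argument is the rank-matching step in the first direction; this is precisely what \cref{L-SumOfFlats}(b) supplies, and its hypotheses have been engineered into the conditions of \eqref{e-CFlatCond} exactly so that they are available when needed.
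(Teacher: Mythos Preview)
Your proposal is correct and follows essentially the same route as the paper. Both directions use the same key ingredients (\cref{T-SplitOff} and \cref{E-TrivialFreeRed}(a) for fullness, \cref{L-SumOfFlats}(b) for the rank-matching step, and \cref{L-FactorFull}, \cref{T-CycFlatsDS}, \cref{L-RestrToCF} for the converse); the only cosmetic difference is that the paper packages the (i)$\Rightarrow$(ii) direction as a single chain of equalities for $\rho'(V)$ via \cref{T-rhoVFlatsCyc}, whereas you phrase it as ``same cyclic flats with same ranks, hence same \qM{} by \cref{C-RhoVFormula}.''
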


Note that $\cZ_i=\cZ(\cM|_{Z_i})$ thanks to \cref{L-RestrToCF}.

\begin{proof}
``(i) $\Rightarrow$ (ii)'' Let~$\cM$ be irreducible.
Then \cref{E-TrivialFreeRed}(a) and \cref{T-SplitOff} together with $\dim E\geq 2$ imply that~$\cM$ is full.
Suppose there do exist nonzero spaces $Z_1,Z_2\in\cZ(\cM)$ satisfying \eqref{e-CFlatCond}. We show that
\[
    \cM=\cM|_{Z_1}\oplus\cM|_{Z_2}.
\]
Set $\cZ=\cZ(\cM|_{Z_1})\oplus\cZ(\cM|_{Z_2})$ and denote the rank functions of $\cM|_{Z_1}\oplus\cM|_{Z_2}$ and $\cM|_{Z_i}$  by~$\rho'$ and~$\rho_i$,
respectively.
With the aid of \cref{T-rhoVFlatsCyc} and \cref{L-SumOfFlats}(b) we obtain for all $V\in\cL(E)$
\begin{align*}
   \rho'(V)&=\dim V+\min_{Y_1\oplus Y_2\in\cZ}\Big(\rho_1(Y_1)+\rho_2(Y_2)-\dim((Y_1\oplus Y_2)\cap V)\Big)\\
     &=\dim V+\min_{Y_1\oplus Y_2\in\cZ}\Big(\rho(Y_1)+\rho(Y_2)-\dim((Y_1\oplus Y_2)\cap V)\Big)\\
     &=\min_{Y_1\oplus Y_2\in\cZ}\Big(\rho(Y_1\oplus Y_2)+\dim((V+(Y_1\oplus Y_2))/(Y_1\oplus Y_2))\Big)\\
     &=\rho(V),
\end{align*}
where the very last step follows from \cref{C-RhoVFormula} and the identity $\cZ=\cZ(\cM)$.
This establishes the stated direct sum and thus contradicts the irreducibility of~$\cM$.
\\
``(ii) $\Rightarrow$ (i)''
Suppose~$\cM$ is full.
By contradiction assume that~$\cM$ is reducible, say $\cM=\cM_1\oplus\cM_2$ for some $\cM_i=(E_i,\rho_i)$
with nonzero ground spaces~$E_i$.
Then  $E=E_1\oplus E_2$ and $\rho(E)=\rho(E_1)+\rho(E_2)$; see \cref{T-DirSumProp}(a).
Moreover, $\cM_i=\cM|_{E_i}$.
Now \cref{L-FactorFull} implies that $E_i\in\cZ(\cM)$ and
\cref{T-CycFlatsDS} shows that $\cZ(\cM)=\cZ(\cM|_{E_1})\oplus\cZ(\cM|_{E_2})$.
\cref{L-RestrToCF} tells us that $\cZ(\cM|_{E_i})=\{Z\in\cZ(\cM)\mid Z\leq E_i\}$, and
all of this gives us cyclic flats $E_1,\,E_2$ satisfying the conditions in~\eqref{e-CFlatCond}.
\end{proof}

Since the collection of cyclic flats is in general quite small, the just presented criterion for irreducibility is in fact very convenient.
For instance, simple inspection shows that the \qM{} in \cref{E-CycFlatsStart} is irreducible.

\begin{exa}\label{E-F27Cont}
Let us consider the \qM{}s $\cM_1,\,\cM_2,\,\cM=\cM_1\oplus\cM_2$, and~$\cN$ from \cref{E-F27}.
\\
(1) We start with~$\cM_2=\cM_{G_2}$, which has ground space~$\F^4$. Its cyclic flats are
\[
   0,\subspace{e_1+e_3,\,e_2+e_3+e_4},\ \subspace{e_1+e_3+e_4,\,e_2},\ \subspace{e_3,\,e_4},\ \F^4
\]
with rank values $0,1,1,1,2$. Hence $\cM_2$ is full and in fact irreducible.
\\
(2) Consider now $\cM_1=\cM_{G_1}$, which has ground space $\F^3$. Its cyclic flats are
\[
   0,\,\subspace{e_1+e_3,\,e_2+e_3}.
\]
Hence $\cM_1$ is not full and since $\cyc(\F^3)=\subspace{e_1+e_3,\,e_2+e_3}$, we can split off a
free \qM{} over a 1-dimensional ground space; see  \cref{T-SplitOff}.
The other summand is $\cM_1|_{\cyc(\F^3)}$.
Straightforward verification shows that this \qM{} is  the uniform
 \qM{} of rank 1 on~$\cyc(\F^3)$.
Since $\cU_1(\F^2)$ is the only full \qM{} on~$\F^2$ of rank~$1$, we obtain
$\cM_1\approx\cU_1(\F)\oplus\cU_1(\F^2)$.
\\
(3) As a consequence, the \qM{} $\cM$ has irreducible decomposition $\cM\approx\cU_1(\F)\oplus\cU_1(\F^2)\oplus\cM_2$.
\\
(4) As for the \qM{} $\cN=\cM_G$, we have $\cl(0)=0$ and $\dim\cyc(\F^7)=6$, thus $\cyc(\F^7)=\rowsp(A)$ for some $A\in\F^{6\times 7}$.
The \qM{} $\cN|_{\cyc(\F^7)}$ is equivalent to the \qM{} $\cN'$ represented by the matrix
$GA\T\in\F_{2^3}^{4\times 6}$, which has rank~$3$.
By construction $\cN'$ is full (see \cref{T-SplitOff}) and $\cN\approx\cU_1(\F)\oplus\cN'$.
Since~$\cU_1(\F)$ has exactly one cyclic flat,~$\cN'$ has $40$ cyclic flats just like~$\cN$; see \cref{T-CycFlatsDS}.
Apart from $\cl(0)=0$ and $\cyc(\F^6)=\F^6$ with rank 3, the cyclic flats of~$\cN'$  are as follows:
\begin{align*}
  &\text{\,\ 7 cyclic flats of dimension 2 and rank 1,}\\
  &\text{24 cyclic flats of dimension 3 and rank 2,}\\
  &\text{\,\ 7 cyclic flats of dimension 4 and rank 2.}
\end{align*}
In order to check~$\cN'$ for irreducibility it thus suffices to test whether any pair $(Z_1,Z_2)$ of cyclic flats with $\dim Z_1=2$ and $\dim Z_2=4$
satisfies \eqref{e-CFlatCond}.
It turns out that there are 28 pairs $(Z_1,Z_2)$ such that $Z_1\oplus Z_2=\F^6$.
By \cref{L-RestrToCF} the collection $\cZ(\cN'|_{Z_i})$ consists exactly of the cyclic flats contained in~$Z_i$.
Now one easily finds that for each pair $(Z_1,Z_2)$ one has $|\cZ(\cN'|_{Z_1})|=2$ and $|\cZ(\cN'|_{Z_2})|=5$, and thus the third condition of~\eqref{e-CFlatCond} is not satisfied.
All of this shows that $\cN\approx\cU_1(\F)\oplus\cN'$ is a decomposition of~$\cN$ into irreducible \qM{}s.
\end{exa}

We continue with our second main result, the decomposition of \qM{}s into irreducible summands.

\begin{theo}\label{T-UniqueDecomp}
Any \qM{} $\cM=(E,\rho)$ is a direct sum of irreducible \qM{}s, whose summands are unique up to equivalence.
\end{theo}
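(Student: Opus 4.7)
I would proceed by induction on $\dim E$. If $\cM$ is irreducible (the base case $\dim E\leq 1$ falls under this), then $\cM$ itself is the required decomposition. Otherwise \cref{D-Irred} gives $\cM=\cM_1\oplus\cM_2$ with both ground spaces of strictly smaller positive dimension; the induction hypothesis applied to each summand, together with the associativity of \cref{C-DSAssoc}, then yields a decomposition of~$\cM$ into irreducibles.

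\textbf{Uniqueness.} By \cref{R-SimpleFact} I may realize two given decompositions inside the same $\cM$ on the ground space~$E$. The first step is to reduce to the case that $\cM$ is full: \cref{T-SplitOff} gives a unique (up to equivalence) decomposition $\cM\approx\cU_{0,l_0}\oplus\cU_{f_0,f_0}\oplus\cM'$ with~$\cM'$ full, and by \cref{E-TrivialFreeRed}(a) the components $\cU_{0,l_0}$ and $\cU_{f_0,f_0}$ decompose (uniquely up to equivalence) as $l_0$ copies of the irreducible $\cU_{0,1}$ and $f_0$ copies of the irreducible $\cU_{1,1}$, these being the only $1$-dimensional \qM{}s.

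Now assume $\cM$ is full with two irreducible decompositions $\cM=\cN_1\oplus\cdots\oplus\cN_k=\cN'_1\oplus\cdots\oplus\cN'_m$ and ground spaces $E_1,\ldots,E_k$ and $F_1,\ldots,F_m$. Iterating \cref{L-FactorFull} every summand is full and every $E_i,F_j$ lies in $\cZ(\cM)$. Iterating \cref{T-CycFlatsDS} gives $\cZ(\cM)=\cZ(\cN'_1)\oplus\cdots\oplus\cZ(\cN'_m)$, so the cyclic flat $E_1$ decomposes uniquely as $E_1=G_1\oplus\cdots\oplus G_m$ with $G_j\in\cZ(\cN'_j)$ and $G_j\leq F_j$. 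The central claim I would establish is
\[
    \cM|_{E_1}=\cN'_1|_{G_1}\oplus\cdots\oplus\cN'_m|_{G_m}.
\]
Both sides are \qM{}s on ground space $E_1$; by \cref{L-RestrToCF} combined with iterated \cref{T-CycFlatsDS} they share the same family of cyclic flats, and on any such $Z=Z_1\oplus\cdots\oplus Z_m$ both assign the rank $\sum_j\rho'_j(Z_j)$ by \cref{T-DirSumProp}(a), so \cref{T-ZDeterminesM} forces the two \qM{}s to coincide. Since $\cM|_{E_1}=\cN_1$ is irreducible, at most one $G_j$ can be nonzero, whence $G_{j_0}=E_1$ for a unique $j_0$. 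The symmetric argument applied to the cyclic flat $F_{j_0}$ produces an $i_0$ with $F_{j_0}\leq E_{i_0}$, and the chain $E_1\leq F_{j_0}\leq E_{i_0}$ inside the internal direct sum $E=\bigoplus_i E_i$ forces $i_0=1$ and $E_1=F_{j_0}$, so $\cN_1=\cN'_{j_0}$. Iterating \cref{T-DirSumProp}(c) then gives $\cM/E_1\approx\cN_2\oplus\cdots\oplus\cN_k\approx\bigoplus_{j\neq j_0}\cN'_j$, a \qM{} of strictly smaller dimension which is again full (direct sums of full \qM{}s stay full since by \cref{T-CycFlatsDS} the lattice $\cZ$ retains its least and greatest elements), and the conclusion follows by induction on $\dim E$.

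The main obstacle will be the displayed identity $\cM|_{E_1}=\cN'_1|_{G_1}\oplus\cdots\oplus\cN'_m|_{G_m}$: the entire matching of summands is channeled through it, and the cleanest route appears to go via \cref{T-ZDeterminesM} (cyclic flats together with their rank values determine the \qM{}) rather than through direct manipulation of the rank-function formula for the direct sum.
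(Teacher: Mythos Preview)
Your argument is correct and follows the same overall strategy as the paper: decompose the ground space $E_1$ of one irreducible summand according to the cyclic-flat decomposition induced by the other direct sum, then use irreducibility to force all but one component to vanish, and finish with a symmetric chain argument $E_1\leq F_{j_0}\leq E_{i_0}$.

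There are two differences worth noting. First, where the paper verifies the three conditions of~\eqref{e-CFlatCond} and then invokes \cref{T-Irred}, you instead establish the stronger identity $\cM|_{E_1}=\bigoplus_j\cN'_j|_{G_j}$ directly via \cref{T-ZDeterminesM}; this is essentially inlining the proof of the implication (i)$\Rightarrow$(ii) of \cref{T-Irred}. Second, the paper matches all summands in one pass (``continuing in this way''), whereas you pass to the contraction $\cM/E_1$ and invoke an inductive hypothesis, which requires the extra (easy) observation that the remaining direct sum stays full. Both routes work; the paper's is slightly shorter since \cref{T-Irred} is already available, while yours is self-contained at this point.
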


\begin{proof}
It is clear that every \qM{} is the direct sum of irreducible \qM{}s.
For the uniqueness, note first that thanks to \cref{T-SplitOff} we may disregard trivial and free summands.
Thus we may assume that~$\cM$ is full and, using once more \cref{R-SimpleFact}, that
\begin{equation}\label{e-TwoDecomp}
   \cM=\bigoplus_{i=1}^t\cM_i=\bigoplus_{i=1}^s\cN_i,
\end{equation}
where each summand~$\cM_i$ and $\cN_i$ is  irreducible (and full by \cref{L-FactorFull}).
We will show that $s=t$ and, after suitable ordering, $\cN_i=\cM_i$ for all $i=1,\ldots, t$.
Thanks to \cref{L-FactorFull} we have
\[
   \cM_i=(Z_i,\rho_i),\ \cN_i=(Z'_i,\rho'_i),\ \text{ where }\
  E=\bigoplus_{i=1}^tZ_i=\bigoplus_{i=1}^s Z'_i\ \text{ and }\ Z_i,\,Z'_i\in\cZ(\cM).
\]
Thus $\cM_i=\cM|_{Z_i}$ and $\cN_i=\cM|_{Z'_i}$ for all~$i$.
Moreover, $\cZ(\cM)=\bigoplus_{i=1}^t\cZ(\cM_i)=\bigoplus_{i=1}^s\cZ(\cN_i)$.
\\
Consider now $Z'_1$. Then
\begin{equation}\label{e-Z'1Sum}
       Z'_1=\bigoplus_{i=1}^t V_i\ \text{ for some }V_i\in\cZ(\cM_i).
\end{equation}
Hence $V_i\leq Z_i$ for all~$i$ and by \cref{T-DirSumProp}(a)
\begin{equation}\label{e-RhoSum}
   \rho(Z'_1)=\sum_{i=1}^t\rho_i(V_i)=\sum_{i=1}^t\rho(V_i).
\end{equation}
We show next that
\begin{equation}\label{e-CFIdentity}
      \cZ(\cN_1)=\bigoplus_{i=1}^t\cZ(V_i), \text{ where }\ \cZ(V_i)=\{Z\in\cZ(\cM_i)\mid Z\leq V_i\}.
\end{equation}
For ``$\supseteq$'' choose $W_i\in\cZ(V_i)$. Then $W:=\bigoplus_{i=1}^t W_i\leq\bigoplus_{i=1}^t V_i=Z'_1$.
Furthermore,~$W$ is in $\cZ(\cM)$, and thus $W\in\{Z\in\cZ(\cM)\mid Z\leq V_i\}\subseteq\cZ(\cN_1)$, where the last containment
is a consequence of \cref{L-RestrToCF}.
\\
For ``$\subseteq$'' let $W\in\cZ(\cN_1)$. Then $W\in\cZ(\cM)$ and thus $W=\bigoplus_{i=1}^t W_i$
for some $W_i\in\cZ(\cM_i)$, which means in particular that $W_i\leq Z_i$.
But since $W\leq Z'_1=\bigoplus_{i=1}^t V_i$ and $V_i\leq Z_i$, we conclude $W_i\leq V_i$ for all~$i$.
This establishes \eqref{e-CFIdentity}.
\\
Now \eqref{e-Z'1Sum}--\eqref{e-CFIdentity} with \cref{T-Irred} tell us that~$\cN_1$ is reducible unless exactly one subspace~$V_i$ is nonzero.
In other words, irreducibility of~$\cN_1$ implies that, without loss of generality, $V_2=\ldots=V_t=0$ and thus $Z'_1=V_1\leq Z_1$.
\\
With the same argument we obtain $Z_1\leq Z'_j$ for some~$j\in\{1,\ldots,s\}$.
Hence $Z'_1\leq Z'_j$ and the directness of the sum $\bigoplus_{i=1}^s Z'_i$ implies $j=1$ and $Z'_1=Z_1$.
This shows $\cN_1=\cM|_{Z_1}=\cM_1$.
\\
Continuing in this way, we see that every summand~$\cN_j$ appears as a summand~$\cM_{l}$, and thus $s\leq t$. By symmetry $s=t$ and,
after reindexing, $\cM_i=\cN_i$ for all $i$.
\end{proof}

Now we can easily classify all \qM{}s on a 3-dimensional ground space.

\begin{exa}\label{E-ClassDim3}
The only irreducible \qM{}s on a 3-dimensional ground space are (up to equivalence) are $\cU_{1,3}$ and $\cU_{2,3}$.
Indeed, since any irreducible \qM{} on a 3-dimensional ground space is full, all 1-dimensional spaces must have rank~$1$ and all
$2$-dimensional spaces must have the same rank as the entire space.
With the aid of \cref{T-SplitOff} and the decomposition of trivial and free \qM{}s in \cref{E-TrivialFreeRed}(a) we obtain now all
\qM{}s (up to equivalence) on a 3-dimensional ground space. They are as follows. Note that this is independent of the field size~$q$.
\[
\begin{array}{|c||l|}
\hline
\rule[-.15cm]{0cm}{0.63cm}\text{rank\,}=0 & \cU_{0,1}\oplus\cU_{0,1}\oplus\cU_{0,1}\\ \hline
\rule[-.15cm]{0cm}{0.63cm}\text{rank\,}=1 & \cU_{0,1}\oplus\cU_{0,1}\oplus\cU_{1,1},\quad \cU_{0,1}\oplus\cU_{1,2},\quad \cU_{1,3}\\ \hline
\rule[-.15cm]{0cm}{0.63cm}\text{rank\,}=2 & \cU_{0,1}\oplus\cU_{1,1}\oplus\cU_{1,1},\quad\cU_{1,1}\oplus\cU_{1,2},\quad \cU_{2,3}\\ \hline
\rule[-.15cm]{0cm}{0.63cm}\text{rank\,}=3 & \cU_{1,1}\oplus\cU_{1,1}\oplus\cU_{1,1}\\\hline
\end{array}
\]
This classification has been derived earlier by different methods in \cite[Appendix A.4]{CeJu21}.
\end{exa}

On a 4-dimensional ground space the description of the irreducible \qM{}s is much harder.
With the same arguments as above we see that the uniform \qM{} $\cU_{1,4}$ is the only irreducible \qM{} of rank~$1$, and thus by duality
$\cU_{3,4}$ is the only one of rank~$3$.

In order to discuss \qM{}s of rank~$2$ we need the notion of a partial $k$-spread.
Recall that a subset~$\cV$ of $\cL(E)$ consisting of $k$-dimensional subspaces is called a \emph{partial $k$-spread} if
$V\cap W=0$ for all $V\neq W$ in~$\cV$.
It is called a \emph{$k$-spread} if $|\cV|=(q^n-1)/(q^k-1)$, where $\dim E=n$.
This is the maximum possible size of a partial $k$-spread and achievable if and only if~$k$ divides~$n$.
Hence if $k\mid n$, a partial $k$-spread of size~$s$ exists for all $1\leq s\leq (q^n-1)/(q^k-1)$.

\begin{prop}\label{P-Dim4}
Let $\dim E=4$.
\begin{alphalist}
\item Let $t\in\{2,\ldots,q^2+3\}\setminus\{4\}$ and~$\cV$ be a partial 2-spread of size $t-2$.
         On $\cL(E)$ define
         \begin{equation}\label{e-rhoSetV}
              \rho_{\cV}(V)=\left\{\begin{array}{cl} 1,&\text{if }V\in\cV,\\ \min\{2,\dim V\},&\text{otherwise.}\end{array}\right.
         \end{equation}
         Then $\cM_{\cV}=(E,\rho_{\cV})$ is an irreducible \qM{} with
         $\cZ(\cM_{\cV})=\cV\cup\{0,E\}$, thus $|\cZ(\cM)|=t$.
          Moreover, $Z_1\wedge Z_2= Z_1\cap Z_2$ and $Z_1\vee Z_2=Z_1+Z_2$ for all $Z_i\in\cZ(\cM)$.
\item  Each irreducible \qM{} of rank~$2$ on~$E$ is of the form $\cM_{\cV}=(E,\rho_{\cV})$ for some partial 2-spread~$\cV$.
         Its number of cyclic flats is $|\cV|+2$, which is at most $q^2+3$.
\item $\cU_2(E)$ is the unique irreducible \qM{} of rank~$2$ with exactly 2 cyclic flats.
\item Let $t\geq3$ and~$\cV,\,\cW$ be partial 2-spreads of size $t-2$ and $\cM_{\cV},\,\cM_{\cW}$ be the associated \qM{}s.
        Then there exists a bijection~$\alpha$ on~$\cL(E)$ such that $\rho_{\cV}(V)=\rho_{\cW}(\alpha(V))$ for all~$V\in\cL(E)$.
        In particular, there exists a rank-preserving and dimension-preserving lattice isomorphism between $\cZ(\cM_{\cV})$ and $\cZ(\cM_{\cW})$.
        For this reason we may call~$\cM_{\cV}$ and~$\cM_{\cW}$ ``bijectively equivalent''.
\end{alphalist}
\end{prop}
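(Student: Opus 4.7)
I would first verify that $\rho_{\cV}$ satisfies (R1)--(R3). Axioms (R1) and (R2) are immediate, while (R3) reduces by a case split on dimensions to the subcase $\rho_{\cV}(V) = \rho_{\cV}(W) = 1$; the partial-spread property $V \cap W = 0$ for distinct elements of $\cV$ handles the only nontrivial configuration, and the remaining 1-dim subcases are direct dimension counts. Next I identify $\cZ(\cM_{\cV})$: $0$ and $E$ are trivially cyclic flats; each $V \in \cV$ is cyclic because its 1-dim subspaces all have rank $1 = \rho_{\cV}(V)$, and a flat because $\rho_{\cV}(V + \subspace{x}) = 2$ for all $x \notin V$; conversely, 1-dim spaces fail to be cyclic (their sole hyperplane $0$ has rank $0$), 2-dim spaces outside $\cV$ have 1-dim subspaces of strictly smaller rank, and no 3-dim space is a flat since its only strict superspace $E$ has the same rank. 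For the meet and join, any two elements of $\cV \cup \{0, E\}$ intersect and sum inside this set (distinct elements of $\cV$ intersect in $0$ and sum to $E$), so $Z_1 \wedge Z_2 = Z_1 \cap Z_2$ and $Z_1 \vee Z_2 = Z_1 + Z_2$. Since $\cM_{\cV}$ is full ($\cl(0) = 0$ because every 1-dim space has rank 1, and $\cyc(E) = E$ because $E$ itself is a cyclic flat), by \cref{T-Irred} irreducibility reduces to ruling out nonzero $Z_1, Z_2 \in \cZ(\cM_{\cV})$ satisfying \eqref{e-CFlatCond}. The only candidates with $Z_1 \oplus Z_2 = E$ and $\rho(Z_1) + \rho(Z_2) = 2$ are pairs of distinct elements of $\cV$, and for such a pair the condition $\cZ_1 \oplus \cZ_2 = \cZ(\cM_{\cV})$ becomes $\{0, Z_1, Z_2, E\} = \cV \cup \{0, E\}$, which holds only when $|\cV| = 2$, i.e., $t = 4$, which is excluded.

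\textbf{Parts (b) and (c).} Conversely, let $\cM = (E, \rho)$ be an irreducible rank-2 $q$-matroid with $\dim E = 4$; by \cref{T-Irred} it is full. The principal step, and the one I expect to be the main obstacle, is showing that every nontrivial cyclic flat is 2-dim of rank 1. A 1-dim cyclic flat is impossible because its hyperplane $0$ has strictly smaller rank. For a 3-dim cyclic flat $W$, flatness forces $\rho(W) < \rho(E) = 2$ and fullness gives $\rho(W) = 1$; a direct attack through \cref{T-Irred} is awkward since $W$ does not on its own produce a matching cyclic flat. I would instead use duality: by \cref{T-DirSumDual}, $\cM^*$ is irreducible and hence full, while \cref{T-Basics}(a) and \cref{C-cyccl} together imply $W^\perp \in \cZ(\cM^*)$ with $\rho^*(W^\perp) = \dim W^\perp + \rho(W) - \rho(E) = 0$, contradicting the fullness of $\cM^*$. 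Once only 2-dim rank-1 nontrivial cyclic flats remain, submodularity applied to a hypothetical 1-dim intersection of two such flats (using that all 3-dim spaces now have rank 2) forces any two to be disjoint, so $\cV := \cZ(\cM) \setminus \{0, E\}$ is a partial 2-spread; the case $|\cV| = 2$ is excluded by \cref{T-Irred} exactly as in part (a). The rank values on 2-dim spaces outside $\cV$ and on 3-dim spaces are then forced to be $2$, so $\rho = \rho_{\cV}$. Part (c) follows at once: $|\cZ(\cM)| = 2$ forces $\cV = \emptyset$, and $\cM_{\emptyset}$ has $\rho(V) = \min\{2, \dim V\}$, i.e., is $\cU_2(E)$.

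\textbf{Part (d).} Since $|\cV| = |\cW|$, the complementary sets of 2-dim subspaces also have equal cardinality. I would choose any bijection $\beta : \cV \to \cW$ and any bijection $\gamma$ between these complements, and extend them to $\alpha : \cL(E) \to \cL(E)$ by acting as $\beta$ on $\cV$, as $\gamma$ on the remaining 2-dim subspaces, and as the identity on all other subspaces. Then $\alpha$ is a bijection, and since $\rho_{\cV}(V)$ depends only on $\dim V$ and (for 2-dim $V$) on whether $V \in \cV$, the formula \eqref{e-rhoSetV} gives $\rho_{\cV}(V) = \rho_{\cW}(\alpha(V))$ for every $V \in \cL(E)$. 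Restricting $\alpha$ to $\cV \cup \{0, E\}$ yields a rank- and dimension-preserving bijection onto $\cW \cup \{0, E\}$; and since in both lattices every pair of distinct nontrivial elements has meet $0$ and join $E$, this restriction is automatically a lattice isomorphism.
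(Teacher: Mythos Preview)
Your proof is correct. Parts (a), (c), and (d) follow essentially the paper's route; the only cosmetic difference is that the paper cites an external reference for the rank axioms in (a) rather than verifying (R1)--(R3) by hand. In part (b) you take a genuinely different path: you classify the nontrivial cyclic flats and invoke duality (via \cref{T-DirSumDual} and the irreducibility of~$\cM^*$) to exclude 3-dimensional ones, then set $\cV=\cZ(\cM)\setminus\{0,E\}$. The paper instead defines $\cV$ directly as $\{V\in\cL(E)\mid \dim V=2,\ \rho(V)=1\}$ and reads off $\rho=\rho_{\cV}$ from fullness alone. Both arguments are valid, but your duality detour is heavier than necessary: the observation you already made in part~(a)---that $\cyc(E)=E$ forces every 3-dimensional subspace (a hyperplane of~$E$) to have rank $\rho(E)=2$---already shows that no 3-dimensional space can be a flat at all, so the passage to~$\cM^*$ can be dropped. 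The paper's direct route thus buys simplicity, while your cyclic-flat approach has the (minor) advantage of making the identification $\cV=\cZ(\cM)\setminus\{0,E\}$ explicit from the outset.
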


\begin{proof}
(a)
First of all, \cite[Prop.~4.6]{GLJ22Gen} tells us that $\cM_{\cV}:=(E,\rho_{\cV})$ is indeed a \qM{}.
One easily checks that  $\cZ(\cM_{\cV})=\cV\cup\{0,E\}$.
In particular, $|\cZ(\cM_{\cV})|=t$.
Since all spaces in~$\cV$ have dimension~2, it is obvious that $\cyc(Z_1\cap Z_2)=Z_1\cap Z_2$ and $\cl(Z_1+Z_2)=Z_1+Z_2$
for all $Z_1,Z_2\in\cZ(\cM)$.
For any $V\in\cV$ we have $\{Z\in\cZ(\cM_{\cV})\mid Z\leq V\}=\{0,V\}$, and therefore
\cref{T-Irred} shows that~$\cM_{\cV}$ is irreducible if and only if $|\cV|\neq 2$, which means $t\neq4$; for $\cV=\{V_1,V_2\}$ we obtain the reducible
\qM{} $\cM_{\cV}=\cM|_{V_1}\oplus\cM|_{V_2}\approx\cU_{1,2}\oplus\cU_{1,2}$.
\\
(b) Let $\cM=(E,\rho)$ be irreducible and of rank~$2$.
Set $\cV=\{V\in\cL(E)\mid \dim V=2,\,\rho(V)=1\}$. Using $\cl(0)=0$ and submodularity of~$\rho$ one obtains that~$\cV$
is a partial $2$-spread. Since $\rho(E)=2$, the set~$\cV$ is exactly the collection of subspaces~$V$ for which $\rho(V)\neq\min\{k,\dim V\}$, and
we conclude that  $\cM=\cM_{\cV}$. The rest follows from~(a) and the fact that any partial $2$-spread in~$E$ has size at most $q^2+1$.
\\
(c) An irreducible \qM{}~$\cM$ with exactly 2 cyclic flats must have $\cZ(\cM)=\{0,E\}$, and \cref{E-TrivialFreeRed}(c) establishes the result.
\\
(d)  Choose any dimension-preserving bijection~$\alpha$ on $\cL(E)$ such that $\alpha(\cV)=\cW$.
Then~$\alpha$ is also rank-preserving thanks to~\eqref{e-rhoSetV}.
\end{proof}

We have the following interesting example, where the bijection in \cref{P-Dim4}(d) is not induced by a linear (or semi-linear) isomorphism on~$E$.
Thus, we obtain non-equivalent \qM{}s whose lattices of flats are related by a rank-preserving lattice isomorphism.

\begin{exa}\label{E-Dim4}
This example is inspired by some facts from finite geometry.
Consider $\F=\F_3$.
The two sets~$\cA_1$ and~$\cA_2$ defined below are spread sets in $\F^2$, that is, subsets of $\F^{2\times 2}$ of order~$9$ such that $A-B\in\GL_2(\F)$ for all distinct $A,B$ in~$\cA_i$.
To any spread set one can associate a right quasifield. This is~$\F_9$ for $\cA_1$ and a right quasifield of Hall type for~$\cA_2$.
Since these two right quasifields are not isotopic we will obtain two $2$-spreads of the same size that are not related by a vector-space isomorphism.
This background is not needed since everything below can also be checked straightforwardly.
\\
In $\F^{2\times 2}$ let
\begin{align*}
  \cA_1&=\bigg\{\begin{pmatrix}0&0\\0&0\end{pmatrix},
       \begin{pmatrix}1&0\\0&1\end{pmatrix},\begin{pmatrix}1&1\\2&1\end{pmatrix},\begin{pmatrix}0&2\\1&0\end{pmatrix},
       \begin{pmatrix}1&2\\1&1\end{pmatrix}, \begin{pmatrix}2&0\\0&2\end{pmatrix},\begin{pmatrix}2&2\\1&2\end{pmatrix},
       \begin{pmatrix}0&1\\2&0\end{pmatrix},\begin{pmatrix}2&1\\2&2\end{pmatrix}\bigg\},\\[2ex]
    \cA_2&=\bigg\{\begin{pmatrix}0&0\\0&0\end{pmatrix},
       \begin{pmatrix}1&0\\0&1\end{pmatrix},\begin{pmatrix}2&0\\0&2\end{pmatrix},\begin{pmatrix}1&1\\1&2\end{pmatrix},
       \begin{pmatrix}2&2\\2&1\end{pmatrix}, \begin{pmatrix}0&1\\2&0\end{pmatrix},\begin{pmatrix}0&2\\1&0\end{pmatrix},
       \begin{pmatrix}1&2\\2&2\end{pmatrix},\begin{pmatrix}2&1\\1&1\end{pmatrix}\bigg\}.
\end{align*}
Note that $\cA_1$ is a subspace of~$\F^{2\times2}$, whereas~$\cA_2$ is not.
Define the set of matrices
\[
  \cB_i=\big\{\big(0\mid I\big)\big\}\cup \big\{\big( I\mid A\big)\mid A\in\cA_i\big\}\subseteq\F^{2\times4}\ \text{ for }\ i=1,2.
\]
Then
\[
   \cV_i=\big\{\rowsp(M)\mid M\in\cB_i\big\},\ i=1,2,
\]
are collections of 2-dimensional subspaces in $\cL(\F^4)$.
One easily verifies that $V\cap W=0$ for all distinct $V,W\in\cV_i$ (or argues that~$\cA_1$ and~$\cA_2$ are spread sets).
Hence~$\cV_1$ and~$\cV_2$ are $2$-spreads in~$\F^4$ because their cardinality is $10=q^2+1$.
By \cref{P-Dim4}, we obtain two irreducible \qM{}s $\cM_{\cV_i}=:\cM_i=(\F^4,\rho_i)$.
We have the following properties.
\\
(1) $\cM_1$ and $\cM_2$ are not equivalent in the sense of \cref{D-Equiv}.
Indeed, there exists no vector space automorphism~$\alpha$ on~$\F^4$ that maps the set~$\cV_1$ to~$\cV_2$.
This follows from the fact that the field $\F_9$ is not isotopic to any Hall quasifield or can be checked directly using a computer algebra
system by showing that no matrix in $\GL_4(\F)$ maps the set~$\cB_1$ to the set~$\cB_2$ (after taking the reduced row echelon forms).
Now it is clear that the \qM{}s $\cM_1$ and~$\cM_2$ are also not lattice-equivalent either because by the Fundamental Theorem of
Projective Geometry any lattice isomorphism is given by a semi-linear isomorphism, which over a prime field is linear.
\\
(2) $\cF(\cM_i)=\cO(\cM_i)=\cZ(\cM_i)=\cV_i\cup\{0,\F^4\}$, and in each of these lattices meet and join are simply intersection and sum.
Moreover, there exits a rank-preserving lattice isomorphism between
$\cF(\cM_1)$ and $\cF(\cM_2)$.
The identities follow from the fact that~$\cV_i$ is a $2$-spread (as opposed to just a partial spread), which means that every $1$-dimensional
space $\subspace{x}$ is contained in a subspace of~$\cV_i$ and therefore $\subspace{x}$ is not a flat.
The rest of the identities is easily verified, and the remaining statements follow from \cref{P-Dim4}.
\\
(3) It is remarkable that~$\cM_1$ is representable over~$\F_9$, whereas~$\cM_2$ is not representable over any field extension of~$\F$.
Indeed, one easily verifies that $\cM_1=\cM_G$, where
\[
     G=\begin{pmatrix}1&\omega^2&0&0\\0&0&1&\omega^2\end{pmatrix}\in\F_{9}^{2\times 4},
\]
where $\omega\in\F_9$ satisfies $\omega^2+2\omega+2=0$.
To see that~$\cM_2$ is not representable, suppose that $\cM_2=\cM_{G'}$ for some $G'\in\F_{3^m}^{2\times 4}$ (for some $m$).
Then $\rk G'=2$ and we may assume $G'$ to be in reduced row echelon form. Next, any such~$G'$ must satisfy
$\rk\big(G'\big(0\mid I\big)\T\big)=\rk\big(G'\big(I\mid 0\big)\T\big)=\rk\big(G'\big(I\mid I\big)\T\big)=1$
(see the set~$\cA_2$ and \cref{D-ReprG}), and therefore must be of the form
\[
    G'=\begin{pmatrix}1&a&0&0\\0&0&1&a\end{pmatrix}\text{ for some }a\in\F_{3^m}.
\]
Using now $\rk\big(G'(I\mid A)\T\big)=1$ for $A$ being the fourth and the sixth matrix in the set~$\cA_2$ leads to $a=1$.
But then the \qM{} $\cM_{G'}$ has loops, and thus~$\cM_{G'}\neq\cM_2$.
Thus~$\cM_2$ is not representable.
\end{exa}

The above discussion suggests that a classification of the irreducible \qM{}s on a 4-dimensional ground space appears to be quite challenging.

\bibliographystyle{abbrv}
\bibliography{literatureAK,literatureLZ}

\begin{thebibliography}{10}

\bibitem{AlBy22}
G.~Alfarano and E.~Byrne.
\newblock The cyclic flats of a \mbox{$q$}-matroid.
\newblock Preprint 2022. arXiv: 2204.02353.

\bibitem{Art57}
E.~Artin.
\newblock {\em Geometric Algebra}.
\newblock Interscience Publishers, Inc., New York, 1957.

\bibitem{BCJ17}
G.~Bollen, H.~Crapo, and R.~Jurrius.
\newblock The {T}utte \mbox{$q$}-polynomial.
\newblock Preprint 2017. arXiv: 1707. 03459.

\bibitem{BoDM08}
J.~E. Bonin and A.~{De Mier}.
\newblock The lattice of cyclic flats of a matroid.
\newblock {\em Ann. Comb.}, 12:155--170, 2008.

\bibitem{BCIJS23}
E.~Byrne, M.~Ceria, S.~Ionica, R.~Jurrius, and E.~Sa{\c{c}}ikara.
\newblock Constructions of new matroids and designs over {$\text{GF}(q)$}.
\newblock {\em Des. Codes Cryptogr.}, 91:pages 451--473, 2023.

\bibitem{BCJ22}
E.~Byrne, M.~Ceria, and R.~Jurrius.
\newblock Constructions of new $q$-cryptomorphisms.
\newblock {\em J. Comb. Theory. Ser. B}, 153:149--194, 2022.

\bibitem{CeJu21}
M.~Ceria and R.~Jurrius.
\newblock The direct sum of \mbox{$q$}-matroids.
\newblock Preprint 2021. arXiv: 2109.13637v4.

\bibitem{Cra64}
H.~Crapo.
\newblock On the theory of combinatorial independence.
\newblock Ph.D.~Thesis, MIT, 1964.

\bibitem{FHGHW21}
R.~Freij-Hollanti, M.~Grezet, C.~Hollanti, and T.~Westerb{\"a}ck.
\newblock Cyclic flats for binary matroids.
\newblock {\em Advances in Applied Mathematics}, 127, 2021.
\newblock doi.org/10.1016/j.aam.2021.102165.

\bibitem{FHHW18}
R.~Freij-Hollanti, C.~Hollanti, and T.~Westerb{\"a}ck.
\newblock Matroid theory and storage codes: {B}ounds and constructions.
\newblock In M.~Greferath, M.~Pav{\v{c}}evi{\'c}, N.~Silberstein, and
  M.~V{\'a}zquez-Castro, editors, {\em Network Coding and Subspace Designs},
  pages 385--425. Springer, 2018.

\bibitem{GhJo20}
S.~R. Ghorpade and T.~Johnsen.
\newblock A polymatroid approach to generalized weights of rank metric codes.
\newblock {\em Des. Codes Cryptogr.}, 88:2531--2546, 2020.

\bibitem{GLJ21C}
H.~Gluesing-Luerssen and B.~Jany.
\newblock Coproducts in categories of $q$-matroids.
\newblock arXiv: 2111. 09723, 2021.

\bibitem{GLJ22Ind}
H.~Gluesing-Luerssen and B.~Jany.
\newblock Independent spaces of $q$-polymatroids.
\newblock {\em Algebr. Comb.}, 5:727--744, 2022.

\bibitem{GLJ22Gen}
H.~Gluesing-Luerssen and B.~Jany.
\newblock $q$-{P}olymatroids and their relation to rank-metric codes.
\newblock {\em J. Algebr. Comb.}, 56:725--753, 2022.

\bibitem{GLJ22Rep}
H.~Gluesing-Luerssen and B.~Jany.
\newblock Representability of the direct sum of $q$-matroids.
\newblock arXiv: 2211.11626, 2022.

\bibitem{GJLR19}
E.~Gorla, R.~Jurrius, H.~L{\'o}pez, and A.~Ravagnani.
\newblock Rank-metric codes and $q$-polymatroids.
\newblock {\em J. Algebraic Combin.}, 52:1--19, 2020.

\bibitem{JuPe18}
R.~Jurrius and R.~Pellikaan.
\newblock Defining the $q$-analogue of a matroid.
\newblock {\em Electron. J. Combin.}, 25:P3.2, 2018.

\bibitem{Ox11}
J.~Oxley.
\newblock {\em Matroid Theory}.
\newblock Oxford Graduate Text in Mathematics. Oxford University Press, 2nd
  edition, 2011.

\bibitem{Put}
A.~Putman.
\newblock The fundamental theorem of projective geometry.
\newblock Available at www3.nd.edu/ $\tilde{\;}$andyp/notes/FunThmProjGeom.pdf.

\bibitem{Shi19}
K.~Shiromoto.
\newblock Codes with the rank metric and matroids.
\newblock {\em Des. Codes Cryptogr.}, 87:1765--1776, 2019.

\bibitem{Sta97}
R.~P. Stanley.
\newblock {\em Enumerative Combinatorics, Vol.~I}.
\newblock Cambridge University Press, 1997.

\end{thebibliography}
\end{document}